\newcommand{\R}{\mathbb{R}}
\newcommand{\g}{\mathfrak{g}}
\newcommand{\greg}{\mathfrak{g}_{\mathrm{reg}}}
\newcommand{\rank}{\mathrm{rank}}
\renewcommand{\exp}{\mathrm{exp}}
\numberwithin{equation}{section}
\newtheorem{theorem}{Theorem}[section]
\newtheorem{proposition}[theorem]{Proposition}
\newtheorem{corollary}[theorem]{Corollary}
\newtheorem{lemma}[theorem]{Lemma}
\newtheorem{conjecture}[theorem]{Conjecture}
\theoremstyle{definition}
\newtheorem{definition}[theorem]{Definition}
\newtheorem{example}[theorem]{Example}
\newtheorem{remark}[theorem]{Remark}
\newtheorem{question}{Question}
\newcommand {\G}{\mathcal G}
\newcommand{\AMT}{\mathsf{AMT}}
\renewcommand{\H}{\mathcal{H}}
\newcommand{\sll}[1]{\mkern-4mu\mathbin{/\mkern-5mu/}_{\mkern-4mu{#1}}}
\newcommand{\im}{\operatorname{im}}
\newcommand{\sss}{\mathsf{s}}
\newcommand{\ttt}{\mathsf{t}}
\newcommand{\fp}[2]{\leftindex_{#1}\times_{#2}\,}
\newcommand{\tto}{\;\substack{\longrightarrow\\[-9pt] \longrightarrow}\;}
\begin{document}
	
	\title[Hamiltonian reduction in symplectic geometry and geometric representation theory]{Some incarnations of Hamiltonian reduction in symplectic geometry and geometric representation theory}
	
	\author[Peter Crooks]{Peter Crooks}
	\author[Xiang Gao]{Xiang Gao}
	\author[Mitchell Pound]{Mitchell Pound}
	\author[Casen Thompson]{Casen Thompson}
	\address{Department of Mathematics and Statistics\\ Utah State University \\ 3900 Old Main Hill \\ Logan, UT 84322, USA}
	\email[Peter Crooks]{peter.crooks@usu.edu}
	\email[Xiang Gao]{xiang.gao@usu.edu}
	\email[Mitchell Pound]{mitchell.pound@usu.edu}
	\email[Casen Thompson]{casen.thompson@usu.edu}

	\subjclass{53D17 (primary); 14L30 (secondary)} 
	\keywords{abelianization, Hamiltonian reduction, Moore--Tachikawa conjecture, symplectic groupoid}
	
	\begin{abstract}
		In this expository note, we give a self-contained introduction to some modern incarnations of Hamiltonian reduction. Particular emphasis is placed on applications to symplectic geometry and geometric representation theory. We thereby discuss abelianization in Hamiltonian geometry, reduction by symplectic groupoids, and the Moore--Tachikawa conjecture.
	\end{abstract}
	
	\maketitle
	\setcounter{tocdepth}{2}
	\tableofcontents
	\vspace{-10pt}

	
	\section{Introduction}
	
	\subsection{Motivation} Marsden--Weinstein reduction \cite{mar-wei:74} is arguably the first mathematically rigorous approach to symmetry reduction in classical mechanics\footnote{While the term ``Marsden--Weinstein reduction" is standard, one should acknowledge Meyer's independent approach to symplectic reduction \cite{mey:73}.}. Applications and generalizations of Marsden--Weinstein reduction continue to support and clarify new research avenues in symplectic geometry \cite{gui-jef-sja:02,cro-may:22,cro-roe:22,cro-wei:24,ler:95,mar-rat:86,sni-wei:83,sja-ler:91,mik-wei:88,bal-may:22,cro-may:241,sja:95,bie:97,mei:96,ale-mal-mei,mei-sja,ale-kos-mei,hur-jef-sja,hklr:87,ati-bot:84,alb:89,kir:84} and geometric representation theory \cite{bal:23,gin-kaz:24,gan-web,gan-gin:04,her-sch-sea:20,moo-tac:11,pan-toe-vaq-vez:13,saf:17,cro-may:24}. At the same time, we believe that many of the modern applications and generalizations of Marsden--Weinstein reduction do not appear in a single, accessible, self-contained manuscript. Our central purpose is to provide such a manuscript. Emphasis is placed on the following topics: abelianizing symplectic quotients via Gelfand--Cetlin data \cite{hof-lan:23,cro-wei:24,cro-wei:23,cro-wei:232,gui-ste:83}, generalizing Hamiltonian reduction to generic submanifolds/subvarieties \cite{cro-may:22,cro-may:241,bal-may:22}, and outlining a new perspective on the Moore--Tachikawa conjecture \cite{cro-may:24}. 
	
	\subsection{Context}
	One has the following paradigm for taking quotients in symplectic geometry. Suppose that a Lie group $G$ acts on a symplectic manifold $M$ by automorphisms of the symplectic structure. The $G$-action is called \textit{Hamiltonian} if it admits a \textit{moment map} $\mu:M\longrightarrow\g^*$, where $\g$ is the Lie algebra of $G$; see Definition \ref{Definition: Hamiltonian spaces}. Given $\xi\in\g^*$, the $G$-stabilizer $G_{\xi}\subset G$ acts on $\mu^{-1}(\xi)$. The \textit{Hamiltonian reduction of $M$ by $G$ at level $\xi\in\g^*$} is the symplectic manifold $M\sll{\xi}G\coloneqq\mu^{-1}(\xi)/G_{\xi}$, subject to whether $\mu^{-1}(\xi)/G_{\xi}$ is a manifold at all. Work of Marsden--Weinstein shows that $\mu^{-1}(\xi)/G_{\xi}$ is a manifold if $G_{\xi}$ acts freely and properly on $\mu^{-1}(\xi)$ \cite{mar-wei:74}. The following are some natural questions that arise in this context.
	
	\begin{question}\label{Question: Generalized Hamiltonian reduction}
		To what extent can the setup of Marsden--Weinstein reduction be generalized to yield new, interesting constructions in symplectic geometry?
	\end{question}
	
	\begin{question}\label{Question: Abelianization}
		It is typically easier to understand Marsden--Weinstein reductions by abelian stabilizer subgroups. To what extent can we abelianize the Marsden--Weinstein reductions of $M$ by $G$, in the sense of presenting each as a Hamiltonian reduction by an abelian group?
	\end{question}
	
	Various answers to Question \ref{Question: Generalized Hamiltonian reduction} have arisen in the fifty-plus years since Marsden--Weinstein reduction was formally introduced. One is Marsden and Ratiu's generalization to the case of Hamiltonian actions on Poisson manifolds \cite{mar-rat:86}. A second is Mikami and Weinstein's reduction by symplectic groupoids at singleton levels \cite{mik-wei:88}. Another is the \'{S}niatycki--Weinstein Poisson algebra associated to an arbitrary reduction level \cite{sni-wei:83}. In the case of a compact connected Lie group $G$, Sjamaar and Lerman remove the requirement that $G_{\xi}$ act freely on $\mu^{-1}(\xi)$; they show the topological space $M\sll{\xi}G=\mu^{-1}(\xi)/G_{\xi}$ to be a \textit{stratified symplectic space} for all $\xi\in\g^*$ \cite{sja-ler:91}. This framework is used in work of Guillemin--Jeffrey--Sjamaar to define a quotient procedure called \textit{symplectic implosion} \cite{gui-jef-sja:02}. A further interesting and useful variant of Marsden--Weinstein reduction is Lerman's \textit{symplectic cutting} procedure for Hamiltonian $\mathrm{U}(1)$-spaces \cite{ler:95}.
	
	In \cite{cro-may:22} and \cite{cro-may:241}, Mayrand and the first named author develop a theory of \textit{generalized Hamiltonian reduction} by symplectic groupoids along pre-Poisson submanifolds. This theory has versions for smooth manifolds, complex manifolds, complex analytic spaces, complex algebraic varieties, and affine schemes. It also witnesses all of the above-mentioned Hamiltonian reduction techniques as special cases. On the other hand, some of the most curious applications of generalized Hamiltonian reduction are to the \textit{Moore--Tachikawa conjecture} in geometric representation theory \cite{moo-tac:11}. In \cite{cro-may:24}, generalized Hamiltonian reduction is used to prove a scheme-theoretic version of this conjecture. The associated techniques are also shown to yield new topological quantum field theories in a category of affine Hamiltonian schemes. 
	
	We now address Question \ref{Question: Abelianization}. A first step is consider \textit{symplectic implosion}, as introduced by Guillemin--Jeffrey--Sjamaar \cite{gui-jef-sja:02}. Let $G$ be a compact connected Lie group with maximal torus $T\subset G$. The \textit{imploded cross-section} of $M$ is a stratified symplectic space $M_{\text{impl}}$ with a stratum-wise Hamiltonian action of $T$. A main result of \cite{gui-jef-sja:02} is that $M\sll{\xi}G$ and $M_{\text{impl}}\sll{\xi}T$ are isomorphic as stratified symplectic spaces for all $\xi$ in a fundamental Weyl chamber. As every Hamiltonian reduction of $M$ by $G$ is isomorphic to one taken at a level in the fundamental Weyl chamber, this result ``abelianizes" all Hamiltonian reductions of $M$ by $G$.
	
	Symplectic implosion allows one to replace the potentially non-abelian group $G$ with the abelian group $T$, at the expense of replacing $M$ with the possibly singular space $M_{\text{impl}}$. One might instead try to retain as much of $M$ as possible, at the expense of working with a higher-rank torus $\mathbb{T}_G$. Weitsman and the first named author accomplish this via \textit{Gelfand--Cetlin abelianizations} \cite{cro-wei:24}. This is largely based on the well-studied \textit{Gelfand-Cetlin systems} of Guillemin--Sternberg \cite{gui-ste:83}, and generalizations due to Hoffman--Lane \cite{hof-lan:23}. Very roughly speaking, one defines a \textit{Gelfand--Cetlin datum} to be a certain type of Hamiltonian $\mathbb{T}_G$-space structure on an open dense subset $\mathcal{U}\subset\g^*$. There exists an open subset $M^{\circ}\subset M$ with a Hamiltonian $\mathbb{T}_G$-action such that $M\sll{\xi}G\cong M^{\circ}\sll{\lambda(\xi)}\mathbb{T}_G$ as stratified symplectic spaces for all generic $\xi\in\g^*$, where $\lambda$ is the moment map for the $\mathbb{T}_G$-action on $\mathcal{U}$.    	 
	
	\subsection*{Acknowledgements}
	P.C. is supported by the National Science Foundation Grant DMS-2454103 and Simons Foundation Grant MPS-TSM-00002292. He also thanks the organizers of the workshop \textit{Symmetry and reduction in Poisson and related geometries}, held the University of Salerno in May 2024. All authors thank the Department of Mathematics and Statistics at Utah State University for providing an environment conducive to preparing this manuscript. We also thank the referee for constructive suggestions.
	
	\section{First cases of Hamiltonian reduction}
	In this section, we survey some of the most foundational cases of Hamiltonian reduction. We begin with preliminaries on Lie group actions and representation theory in Subsections \ref{Subsection: Group actions on manifolds} and \ref{Subsection: Representation theory}, respectively. Subsection \ref{Subsection: Linear Hamiltonian reduction} then describes a Hamiltonian reduction procedure for subspaces of a finite-dimensional symplectic vector space. To prepare for Hamiltonian reduction in broader contexts, Subsection \ref{Subsection: Symplectic manifolds} briefly reviews the basics of symplectic manifold theory. Subsection \ref{Subsection: Pre-symplectic reduction} then addresses the Hamiltonian reduction of a symplectic manifold along a pre-symplectic submanifold. In Subsection \ref{Subsection: Hamiltonian $G$-spaces}, we briefly review the definition of a Hamiltonian $G$-space. The previous two subsections are harnessed in Subsection \ref{Subsection: Marsden--Weinstein reduction}, where we recover Marsden--Weinstein reduction. We derive the so-called ``shifting trick" and universality of $T^*G$ in Subsection \ref{Subsection: Some identities}. Subsections \ref{Subsection: The orbit type} and \ref{Subsection: A generalization} then outline Sjamaar and Lerman's generalization of Hamiltonian reduction to stratified symplectic spaces.  
	
	\subsection{Group actions on manifolds}\label{Subsection: Group actions on manifolds} With the exception of Subsection \ref{Subsection: Reduction in symplectic vector spaces}, we take our base field to be $\Bbbk=\mathbb{R}$ or $\Bbbk=\mathbb{C}$. The reader should apply the adjectives \textit{real} (resp. \textit{complex}) and \textit{smooth} (resp. \textit{holomorphic}) where appropriate in the case $\Bbbk=\mathbb{R}$ (resp. $\Bbbk=\mathbb{C}$). All maps between manifolds, differential forms on manifolds, group actions on manifolds, and other constructions should be taken to be smooth or holomorphic, as appropriate. 
	
	Consider an equivalence relation $\sim$ on a manifold $M$. Let $\pi:M\longrightarrow M/\hspace{-3pt}\sim$ denote the associated quotient map. We say that $M/\hspace{-3pt}\sim$ \textit{is a manifold if} it carries a manifold structure for which $\pi$ is a submersion. In this case, the manifold structure on $M/\hspace{-3pt}\sim$ is unique. We are principally interested in equivalence relations arising from distributions and group actions.

	Given a Lie group $G$, a \textit{$G$-manifold} is a manifold endowed with a left action of $G$. Write $\g$ for the Lie algebra of $\g$, and $\exp:\g\longrightarrow G$ for the exponential map. The generating vector field of $x\in\g$ is defined by $$(x_M)_p\coloneqq\frac{d}{dt}\bigg\vert_{t=0}(\exp(-tx)\cdot p)$$ for all $p\in M$. The map $x\mapsto x_M$ defines a Lie algebra morphism from $\g$ to vector fields on $M$. On the other hand, write $$G\cdot p\coloneqq\{g\cdot p:g\in G\}\subset M\quad\text{and}\quad G_p\coloneqq\{g\in G:g\cdot p=p\}\subset G$$ for the $G$-orbit and $G$-stabilizer of $p\in M$, respectively. We have $$T_p(G\cdot p)=\{(x_M)_p:x\in\g\}\quad\text{and}\quad \g_p=\{x\in\g:(x_M)_p=0\},$$ where 
	$\g_p\subset\g$ denotes the Lie algebra of $G_p$. The action of $G$ on an invariant subset $N\subset M$ is called \textit{free} (resp. \textit{locally free}) if $G_p=\{e\}$ (resp. $\g_p=\{0\}$) for all $p\in N$. This action is called \textit{proper} if the map $$G\times N\longrightarrow N\times N,\quad (g,p)\mapsto (g\cdot p,p)$$ is a proper map of topological spaces. If the action of $G$ on $M$ is free and proper, then $M/G$ is a manifold.
	
	Two particularly important $G$-manifolds arise from representation theory. To this end, let $\mathrm{Ad}:G\longrightarrow\operatorname{GL}(\g)$ and $\mathrm{Ad}^*:G\longrightarrow\operatorname{GL}(\g^*)$ denote the adjoint and coadjoint representations of $G$, respectively. These induce the adjoint and coadjoint actions of $G$ on $\g$ and $\g^*$, respectively. One finds that $$\g_{x}=\{y\in\g:[x,y]=0\}\quad\text{and}\quad\g_{\xi}=\{y\in\g:\xi\circ[y,\cdot]=0\}$$ for all $x\in\g$ and $\xi\in\g^*$. The differentials of $\mathrm{Ad}$ and $\mathrm{Ad}^*$ at $e\in G$ are the adjoint and coadjoint representations of $\g$, respectively; we denote these by $\mathrm{ad}:\g\longrightarrow\mathfrak{gl}(\g)$ and $\mathrm{ad}^*:\g\longrightarrow\mathfrak{gl}(\g^*)$.
	
	\subsection{Representation theory}\label{Subsection: Representation theory}
	Suppose that $G$ is a compact connected real Lie group with Lie algebra $\g$. Let $T\subset G$ be a maximal torus with Lie algebra $\mathfrak{t}\subset\g$. On the other hand, consider the complex Lie algebras $\mathfrak{g}_{\mathbb{C}}\coloneqq\g\otimes_{\mathbb{R}}\mathbb{C}$ and $\mathfrak{t}_{\mathbb{C}}\coloneqq\mathfrak{t}\otimes_{\mathbb{R}}\mathbb{C}\subset\mathfrak{g}_{\mathbb{C}}$. Let $G_{\mathbb{C}}$ be a connected complex Lie group integrating $\g_{\mathbb{C}}$, and write $T_{\mathbb{C}}\subset G_{\mathbb{C}}$ for the maximal torus integrating $\mathfrak{t}_{\mathbb{C}}\subset\mathfrak{g}_{\mathbb{C}}$. One knows that $G_{\mathbb{C}}$ is a reductive affine algebraic group. It follows that $$\g_{\mathbb{C}}=\mathfrak{t}_{\mathbb{C}}\oplus\bigoplus_{\alpha\in\Phi}(\g_{\mathbb{C}})_{\alpha}$$ as $\mathfrak{t}_{\mathbb{C}}$-modules, where $\Phi\subset(\mathfrak{t}_{\mathbb{C}})^*$ is the set of roots and $$(\g_{\mathbb{C}})_{\alpha}\coloneqq\{x\in\mathfrak{g}_{\mathbb{C}}:[y,x]=\alpha(y)x\text{ for all }y\in\mathfrak{t}_{\mathbb{C}}\}.$$ A straightforward exercise reveals that $\alpha(x)\in i\mathbb{R}$ for all $x\in\mathfrak{t}$. Choose a collection $\Phi^{+}\subset\Phi$ of positive roots, and consider the cone
	$$\mathfrak{t}_{+}\coloneqq\{x\in\mathfrak{t}:-i\alpha(x)\geq 0\text{ for all }\alpha\in\Phi^{+}\}\subset\mathfrak{t}.$$ It turns out that $\mathfrak{t}_{+}$ is a fundamental domain for the adjoint action of $G$, i.e. the map
	$$\mathfrak{t}_{+}\longrightarrow\g/G,\quad x\mapsto G\cdot x$$ is a bijection.
	
	The compactness of $G$ allows us to choose a $G$-invariant inner product on $\g$. Note that the inner product induces a $G$-module isomorphism $\g\longrightarrow\g^*$. Let $\mathfrak{t}_{+}^*\subset\g^*$ denote the image of $\mathfrak{t}_{+}\subset\g$ under this isomorphism. It follows that $\mathfrak{t}_{+}^*$ is a fundamental domain for the coadjoint action of $G$. One calls $\mathfrak{t}^*_{+}$ a \textit{fundamental Weyl chamber}.
	
	\subsection{Linear Hamiltonian reduction}\label{Subsection: Reduction in symplectic vector spaces}\label{Subsection: Linear Hamiltonian reduction} Let $\Bbbk$ be a field. Consider a finite-dimensional $\Bbbk$-vector space $V$ and bilinear form $\omega:V\times V\longrightarrow\Bbbk$. We may form the linear map $$\omega^{\vee}:V\longrightarrow V^*,\quad v\mapsto\omega(v,\cdot).$$ One calls $\omega$ a \textit{symplectic form} if $\omega$ is skew-symmetric and non-degenerate, where the latter term means that $\omega^{\vee}$ is a vector space isomorphism. A \textit{symplectic vector space} is a finite-dimensional $\Bbbk$-vector space with a prescribed symplectic form. The dimension of a symplectic vector space is necessarily even. At the same time, there is exactly one isomorphism class of symplectic vector spaces in each even dimension. The isomorphism class of $2n$-dimensional symplectic vector spaces is represented by $\Bbbk^n\oplus\Bbbk^n$, with symplectic form
	$$(\Bbbk^n\oplus\Bbbk^n)\times (\Bbbk^n\oplus\Bbbk^n)\longrightarrow\Bbbk,\quad ((x_1,y_1),(x_2,y_2))\mapsto x_1^Ty_2-x_2^Ty_1.$$
	
	Suppose that $W\subset V$ is a subspace of a symplectic vector space $(V,\omega)$. One has the \textit{annihilator} $W^{\perp}\subset V^*$ of $W$, as well as the subspace $$W^{\omega}\coloneqq\{v\in V:\omega(v,w)=0\text{ for all }w\in W\}=(\omega^{\vee})^{-1}(W^{\perp})\subset V.$$ It follows that $\dim V=\dim W+\dim W^{\omega}$ and $(W^{\omega})^{\omega}=W$. The subspace $W$ is called \textit{isotropic} (resp. \textit{coisotropic}) if $W\subset W^{\omega}$ (resp. $W^{\omega}\subset W$).
	
	We may restrict $\omega$ to a bilinear form $$\omega\big\vert_W\coloneqq\omega\big\vert_{W\times W}:W\times W\longrightarrow\Bbbk.$$ While this restricted form is clearly skew-symmetric, non-degeneracy need not hold; consider the subspace $\Bbbk^n\oplus\{0\}$ of the symplectic vector space $\Bbbk^n\oplus\Bbbk^n$. Non-degeneracy holds if and only if the kernel of $(\omega\big\vert_W)^{\vee}:W\longrightarrow W^*$ is trivial. This kernel is $W\cap W^{\omega}$. It is also clear that $(\omega\big\vert_W)^{\vee}$ takes values in the subspace $$(W/(W\cap W^{\omega}))^*\cong(W\cap W^{\omega})^{\perp}\subset W^*.$$ These last few sentences imply that $\omega\big\vert_{W}$ descends to a symplectic form $$\overline{\omega\big\vert_{W}}:(W/(W\cap W^{\omega}))\times (W/(W\cap W^{\omega}))\longrightarrow\Bbbk.$$ In other words, $(W/(W\cap W^{\omega}),\overline{\omega\big\vert_{W}}))$ is a symplectic vector space.
	
	\begin{definition}[Linear Hamiltonian reduction]
		Let $W\subset V$ be a subspace of a symplectic vector space $V$. We define the \textit{Hamiltonian reduction of $W$ in $V$} to be the symplectic vector space $$\mathrm{Hred}_V(W)\coloneqq (W/(W\cap W^{\omega}),\overline{\omega\big\vert_{W}}).$$  
	\end{definition}
	
	\subsection{Symplectic manifolds}\label{Subsection: Symplectic manifolds}
	A \textit{symplectic manifold} consists of a manifold $M$ and \textit{symplectic form} on $M$, i.e. a $2$-form $\omega\in\Omega^2(M)$ that satisfies the following properties: $\mathrm{d}\omega=0$ and $(T_pM,\omega_p)$ is a symplectic vector space for all $p\in M$. The second condition can be reformulated as requiring $$\omega^{\vee}:TM\longrightarrow T^*M,\quad (p,v)\mapsto\omega_p(v,\cdot)$$ to be a bundle isomorphism. The notion of isomorphism between symplectic manifolds is clear, and called a \textit{symplectomorphism}. One has the following standard examples of symplectic manifolds.
	
	\begin{example}[Symplectic vector spaces]\label{Example: Symplectic vector spaces}
		Let $(V,\omega)$ be a symplectic vector space. We may regard $V$ as a manifold and $\omega$ as a $2$-form on this manifold. It is straightforward to check that $\mathrm{d}\omega=0$. Since $\omega_p=\omega$ for all $p\in V$, we conclude that $\omega$ is a symplectic form on $V$. It follows that symplectic vector spaces are symplectic manifolds.
	\end{example}
	
	Recall that a symplectic vector space is isomorphic to $\Bbbk^n\oplus\Bbbk^n$ for some $n\in\mathbb{Z}_{\geq 0}$, where the symplectic form on the latter is as in Subsection \ref{Subsection: Reduction in symplectic vector spaces}. On the other hand, Example \ref{Example: Symplectic vector spaces} implies that the symplectic vector space $\Bbbk^n\oplus\Bbbk^n$ is a symplectic manifold. It should therefore come as no surprise that each symplectic manifold is locally isomorphic to $\Bbbk^n\oplus\Bbbk^n$ for some $n\in\mathbb{Z}_{\geq 0}$; this is the \textit{Darboux theorem}. At the same time, one can use the dot product on $\Bbbk^n$ to identify $\Bbbk^n\oplus\Bbbk^n$ with $T^*(\Bbbk^n)$. The next example explains that the cotangent bundle of every manifold is canonically symplectic.
	
	\begin{example}[Cotangent bundles]\label{Example: Cotangent bundles}
		Let $M$ be a manifold. Consider the cotangent bundle projection $\pi:T^*M\longrightarrow M$. Define $\alpha\in\Omega^1(T^*M)$ by the property that $\alpha_{(p,\phi)}(v)=\phi(\mathrm{d}\pi_p(v))$ for all $(p,\phi)\in T^*M$ and $v\in T_{(p,\phi)}(T^*M)$. One calls $\alpha$ the \textit{tautological $1$-form} on $T^*M$. On the other hand, $-\mathrm{d}\alpha$ is a symplectic form on $T^*M$. The contangent bundle of every manifold is thereby a symplectic manifold.
	\end{example}
	
	\begin{example}[Opposites]\label{Example: Opposites}
		Let $M$ be a symplectic manifold with symplectic form $\omega$. A straightforward exercise reveals that $-\omega$ is also a symplectic form on $M$. The \textit{opposite} of $M$ is the symplectic manifold that results from equipping $M$ with $-\omega$; it is denoted $M^{\mathrm{op}}$.
	\end{example}
	
	\begin{example}[Products]\label{Example: Products}
		Let $M$ and $N$ be symplectic manifolds with respective symplectic forms $\omega_M\in\Omega^2(M)$ and $\omega_N\in\Omega^2(N)$. Let $\pi_M:M\times N\longrightarrow M$ and $\pi_N:M\times N\longrightarrow N$ denote the usual projection maps. One may verify that $\pi_M^*\omega_M+\pi_N^*\omega_N$ is a symplectic form on $M\times N$. A product of symplectic manifolds is thereby symplectic.
	\end{example}
	
	\begin{example}[Symplectic leaves]\label{Example: Symplectic leaves}
		Consider a manifold $M$ with a bivector field $\sigma\in\mathrm{H}^0(M,\wedge^2(TM))$. One calls $\sigma$ a \textit{Poisson bivector field} if $\{f_1,f_2\}\coloneqq\sigma(\mathrm{d}f_1,\mathrm{d}f_2)$ is a Poisson bracket on the structure sheaf of $M$. The pair $(M,\sigma)$ is then called a \textit{Poisson manifold}. In this case, consider the bundle morphism
		$$\sigma^{\vee}:T^*M\longrightarrow TM,\quad (p,\phi)\mapsto\sigma_p(\phi,\cdot).$$ It turns out that $\sigma^{\vee}$ is an isomorphism if and only if $\sigma^{\vee}=-(\omega^{\vee})^{-1}$ for a unique symplectic form $\omega$ on $M$. Conversely, a symplectic form $\omega$ on $M$ determines a Poisson bivector field $\sigma$ on $M$ by $\sigma^{\vee}=-(\omega^{\vee})^{-1}$. Symplectic manifolds are thereby those Poisson manifolds $(M,\sigma)$ for which $\sigma^{\vee}$ is a bundle isomorphism.
		
		Let $(M,\sigma)$ be a Poisson manifold. Note $\mathrm{im}(\sigma^{\vee})\subset TM$ is a possibly singular distribution on $M$. Despite this, $M$ admits a partition into integral leaves. Let $L\subset M$ be one such leaf. The Poisson bivector field $\sigma$ is tangent to $L$, and so determines a Poisson bivector field $\sigma_L\in H^0(L,\wedge^2(TL))$. It turns out that $\sigma_L^{\vee}:T^*L\longrightarrow TL$ is a bundle isomorphism. In light of the above, there exists a unique symplectic form $\omega_L\in\Omega^2(L)$ satisfying $\sigma_L^{\vee}=-(\omega_L^{\vee})^{-1}$. The integral leaves of $\mathrm{im}(\sigma^{\vee})$ are therefore called the \textit{symplectic leaves} of $M$.
		
		The following is a special case of the above. Let $G$ be a connected Lie group with Lie algebra $\g$. There is a unique Poisson bivector field $\sigma$ on $\g^*$ satisfying $\sigma_{\xi}^{\vee}(x)=\mathrm{ad}^*_x(\xi)$ for all $x\in\g$ and $\xi\in\g^*$; it is called the \textit{Lie-Poisson structure}. The symplectic leaves of this Poisson structure are precisely the coadjoint orbits of $G$. Given such an orbit $\mathcal{O}\subset\g^*$, note that $T_{\xi}\mathcal{O}=\{\xi\circ[x,\cdot]:x\in\g\}$ for all $\xi\in\mathcal{O}$. The symplectic form $\omega_{\mathcal{O}}$ on $\mathcal{O}$ is given by
		$$(\omega_{\mathcal{O}})_{\xi}(\xi\circ[x,\cdot],\xi\circ[y,\cdot])=\xi([x,y])$$ for all $\xi\in\mathcal{O}$ and $x,y\in\g$. 
		
		The connectedness of $G$ is not necessary for $\omega_{\mathcal{O}}$ to define a symplectic form on $\mathcal{O}$. This hypotheses is only used to ensure that the coadjoint orbits of $G$ constitute the symplectic leaves of $\g^*$.
	\end{example}
	
	\subsection{Pre-symplectic Hamiltonian reduction}\label{Subsection: Pre-symplectic reduction}	
	Let us consider a submanifold $N\subset M$ of a symplectic manifold $(M,\omega)$. In light of Subsection \ref{Subsection: Reduction in symplectic vector spaces}, it is reasonable that the symplectic vector spaces $\{\mathrm{Hred}_{T_pM}(T_pN)\}_{p\in N}$ might somehow form the tangent spaces to a ``Hamiltonian reduction of $N$ in $M$". A first step in this direction is to set
	$$(TN)^{\omega}\coloneqq \{(p,v)\in TM:p\in N\text{ and }v\in (T_pN)^{\omega_p}\}.$$ One may consider the distribution $\mathcal{D}_N\coloneqq TN\cap (TN)^{\omega}$ on $N$. It is then tempting to somehow take a ``quotient" of $N$ by this distribution. This endeavor would be most tractable if $\mathcal{D}_N$ were a regular distribution. Regularity amounts to the condition that $\dim(T_pN\cap (T_pN)^{\omega_p})$ not depend on $p\in N$. A submanifold $N\subset M$ with this property is called \textit{pre-symplectic}.
	
	In light of the above, let $N\subset M$ be a pre-symplectic submanifold of a symplectic manifold  $(M,\omega)$. One finds that the regular distribution $\mathcal{D}_N\subset TN$ is integrable. As such, it induces a foliation of $N$ into integral leaves. Write $N/\mathcal{D}_N$ for the associated leaf space, and $\pi:N\longrightarrow N/\mathcal{D}_N$ for the quotient map. If $N/\mathcal{D}_N$ is a manifold, the differential of $\pi$ induces an identification of $T_{[p]}(N/\mathcal{D}_N)$ with $T_pN/(T_pN\cap (T_pN)^{\omega_p})=\mathrm{Hred}_{T_pM}(T_pN)$ for all $p\in N$. We also know $\mathrm{Hred}_{T_pM}(T_pN)$ to be a symplectic vector space for all $p\in N$. The symplectic vector space structure thereby inherited by $T_{[p]}(N/\mathcal{D}_N)$ is seen not to depend on the choice of $p\in N$. This observation is part of the following stronger statement.
	
	\begin{theorem}\label{Theorem: General pre-symplectic reduction}
		Suppose that $N\subset M$ is a pre-symplectic submanifold of a symplectic manifold $(M,\omega)$. Consider the inclusion $j:N\longrightarrow M$ and quotient map $\pi:N\longrightarrow N/\mathcal{D}_N$. If $N/\mathcal{D}_N$ is a manifold, then there exists a unique symplectic form $\overline{\omega}$ on $N/\mathcal{D}_N$ satisfying $j^*\omega=\pi^*\overline{\omega}$.
	\end{theorem}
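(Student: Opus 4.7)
The plan is to reduce everything to the standard ``descent along a surjective submersion'' principle: a differential form $\alpha$ on $N$ is of the shape $\pi^*\beta$ for some (necessarily unique) smooth form $\beta$ on $N/\mathcal{D}_N$ if and only if $\alpha$ is \emph{basic} with respect to the foliation $\mathcal{D}_N$, meaning $\iota_X\alpha=0$ and $\mathcal{L}_X\alpha=0$ for every vector field $X$ tangent to $\mathcal{D}_N$. Uniqueness of $\overline{\omega}$ is then immediate, because $\pi$ is a surjective submersion so $\pi^*:\Omega^2(N/\mathcal{D}_N)\longrightarrow\Omega^2(N)$ is injective.

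For existence, I would apply the descent criterion to $\alpha\coloneqq j^*\omega$. The interior product condition is straightforward: for $p\in N$ and $X_p\in\mathcal{D}_{N,p}=T_pN\cap(T_pN)^{\omega_p}$, one has $(\iota_X j^*\omega)_p(Y_p)=\omega_p(X_p,Y_p)=0$ for all $Y_p\in T_pN$, simply because $X_p\in(T_pN)^{\omega_p}$. The Lie derivative condition follows from Cartan's magic formula: $\mathcal{L}_X(j^*\omega)=\mathrm{d}\iota_X(j^*\omega)+\iota_X\mathrm{d}(j^*\omega)$, where the first term vanishes by what we just established and the second term vanishes because $\mathrm{d}(j^*\omega)=j^*\mathrm{d}\omega=0$. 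Hence $j^*\omega$ is basic, and the descent criterion supplies a unique smooth $2$-form $\overline{\omega}$ on $N/\mathcal{D}_N$ with $\pi^*\overline{\omega}=j^*\omega$.

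It remains to verify that $\overline{\omega}$ is a symplectic form. Closedness is immediate:
\[
\pi^*(\mathrm{d}\overline{\omega})=\mathrm{d}(\pi^*\overline{\omega})=\mathrm{d}(j^*\omega)=j^*\mathrm{d}\omega=0,
\]
so $\mathrm{d}\overline{\omega}=0$ by injectivity of $\pi^*$. For non-degeneracy, one fixes $p\in N$ and observes that $\mathrm{d}\pi_p:T_pN\longrightarrow T_{[p]}(N/\mathcal{D}_N)$ is surjective with kernel $\mathcal{D}_{N,p}=T_pN\cap(T_pN)^{\omega_p}$, so it identifies $T_{[p]}(N/\mathcal{D}_N)$ with $\mathrm{Hred}_{T_pM}(T_pN)$. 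Under this identification, the identity $\pi^*\overline{\omega}=j^*\omega$ becomes precisely the identification of $\overline{\omega}_{[p]}$ with the symplectic form $\overline{\omega\big\vert_{T_pN}}$ on the linear Hamiltonian reduction $\mathrm{Hred}_{T_pM}(T_pN)$ described in Subsection \ref{Subsection: Linear Hamiltonian reduction}. This form is non-degenerate by the construction of that subsection, so $\overline{\omega}_{[p]}$ is non-degenerate for every $[p]$.

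The only genuine obstacle is verifying the descent criterion itself, i.e. justifying that a form which is basic in the pointwise/Lie-theoretic sense actually descends to a smooth form on the quotient. This is a standard consequence of the existence of foliation charts (locally $\pi$ looks like a projection $\mathbb{R}^k\times\mathbb{R}^{n-k}\longrightarrow\mathbb{R}^{n-k}$), and I would simply cite it; modulo this foundational fact, the remainder of the argument is the short chain of computations above.
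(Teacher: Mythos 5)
Your proposal is correct and follows essentially the route the paper takes: the paper states the theorem without a formal proof, but its preceding discussion identifies $T_{[p]}(N/\mathcal{D}_N)$ with the linear Hamiltonian reduction $\mathrm{Hred}_{T_pM}(T_pN)$ exactly as you do for non-degeneracy, and your descent-of-basic-forms argument (horizontality from $\mathcal{D}_N\subset (TN)^{\omega}$, invariance via Cartan's formula and $\mathrm{d}\omega=0$, injectivity of $\pi^*$ for uniqueness and closedness) is the standard way to supply the existence and smoothness of $\overline{\omega}$ that the paper leaves implicit. The one hypothesis worth making explicit when citing the descent criterion is that the fibers of $\pi$ are connected, which holds here because they are precisely the integral leaves of $\mathcal{D}_N$.
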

	
	\begin{definition}[Pre-symplectic Hamiltonian reduction]
		Assume that the hypotheses of Theorem \ref{Theorem: General pre-symplectic reduction} are satisfied. We define the \textit{Hamiltonian reduction of $N$ in $M$} to be the symplectic manifold
		$$\mathrm{Hred}_M(N)\coloneqq (N/\mathcal{D}_N,\overline{\omega}).$$
		
	\end{definition}
	
	\subsection{Hamiltonian $G$-spaces}\label{Subsection: Hamiltonian $G$-spaces}
	Let $(M,\omega)$ be a symplectic manifold. One defines the \textit{Hamiltonian vector field} of a function $f$ on $M$ by
	$X_f\coloneqq-(\omega^{\vee})^{-1}(\mathrm{d}f)$.  At the same time, suppose that $M$ is a $G$-manifold for a Lie group $G$ with Lie algebra $\g$. Recall that $\g^*$ carries the coadjoint action of $G$. Given a map $\mu:M\longrightarrow\g^*$ and element $x\in\g$, let $\mu^{x}:M\longrightarrow\Bbbk$ denote the result of pairing $\mu$ with $x$ pointwise.
	
	\begin{definition}\label{Definition: Hamiltonian spaces}
		Retain the notation and setup described above. 
		\begin{itemize}
			\item[\textup{(i)}] The map $\mu:M\longrightarrow\g^*$ is called a \textit{moment map} if it is $G$-equivariant and $X_{\mu^{x}}=x_M$ for all $x\in\g$.
			\item[\textup{(ii)}] One calls the $G$-action on $M$ \textit{Hamiltonian} if $\omega\in\Omega^2(M)^G$ and a moment map exists.
			\item[\textup{(iii)}] We call $(M,\omega,\mu)$ a \textit{Hamiltonian $G$-space} if the $G$-action on $M$ is Hamiltonian and $\mu:M\longrightarrow\g^*$ is a moment map.
		\end{itemize}
	\end{definition}
	
	The following are standard examples of Hamiltonian $G$-spaces.
	
	\begin{example}[Cotangent bundles of $G$-manifolds]\label{Example: Cotangent bundles of G-manifolds}
		Let $M$ be a $G$-manifold with action map $\mathcal{A}:G\times M\longrightarrow M$. Write $\mathcal{A}_g:M\longrightarrow M$ for the isomorphism given by acting by a fixed $g\in G$. It follows that $T^*M$ is a $G$-manifold via
		$$g\cdot(p,\phi)=(g\cdot p,\phi\circ((\mathrm{d}\mathcal{A}_{g})_p)^{-1}),\quad g\in G,\text { }(p,\phi)\in T^*M.$$ This action is called the \textit{cotangent lift} of the $G$-action on $M$. At the same time, Example \ref{Example: Cotangent bundles} explains that $T^*M$ is a symplectic manifold. The $G$-action is Hamiltonian with respect to the symplectic form on $T^*M$. A moment map is given by $$\mu:T^*M\longrightarrow\g^*,\quad \mu(p,\phi)(x)=-\phi((x_M)_p)$$ for all $(p,\phi)\in T^*M$ and $x\in\g$. 
	\end{example}
	
	\begin{example}[Poisson Hamiltonian actions]\label{Example: Poisson Hamiltonian actions} Let $(M,\sigma)$ be a Poisson manifold. Given a function $f$ on $M$, one defines the \textit{Hamiltonian vector field of $f$} by $X_f\coloneqq\sigma^{\vee}(\mathrm{df})$. Note that this generalizes the definition of a Hamiltonian vector field on a symplectic manifold. It also allows us to generalize Definition \ref{Definition: Hamiltonian spaces} from symplectic manifolds to Poisson manifolds; one simply replaces the $G$-invariance of $\omega$ with that of $\sigma$.
		
		Let $(M,\sigma,\mu)$ be a Hamiltonian $G$-space. If $G$ is connected, then $G$ preserves the symplectic leaves of $M$. The $G$-action on each leaf $L\subset M$ is Hamiltonian with respect to the symplectic form $\omega_L$ from Example \ref{Example: Symplectic leaves}. One finds that $\mu\big\vert_L:L\longrightarrow\g^*$ is a moment map.
		
		Recall the Lie-Poisson structure on $\g^*$ from Example \ref{Example: Symplectic leaves}. The coadjoint action of $G$ is Hamiltonian with respect to this Poisson structure, and admits the identity $\g^*\longrightarrow\g^*$ as a moment map. At the same time, Example \ref{Example: Symplectic leaves} tells us that each coadjoint orbit $\mathcal{O}\subset\g^*$ is symplectic. The $G$-action on $\mathcal{O}$ is Hamiltonian with moment map the inclusion $\mathcal{O}\longrightarrow\mathfrak{g}^*$.
		
	\end{example} 
	
	\begin{example}[Opposites]\label{Examples: Opposites Hamiltonian}
		Let $M$ be a Hamiltonian $G$-space with moment map $\mu:M\longrightarrow\g^*$. We may consider the opposite of the symplectic manifold $M$, as defined in Example \ref{Example: Opposites}. The action of $G$ on the opposite of $M$ is Hamiltonian with moment map $-\mu:M\longrightarrow\g^*$. Let us write $M^{\mathrm{op}}$ for this new Hamiltonian $G$-space.
	\end{example} 
	
	\begin{example}[Restrictions]\label{Example: Restrictions}
		Let $M$ be a Hamiltonian $G$-space with moment map $\mu:M\longrightarrow\g^*$. Consider a closed subgroup $H\subset G$ with Lie algebra $\mathfrak{h}\subset\mathfrak{g}$. Note that $M$ is an $H$-manifold via a restriction of the $G$-action. This $H$-action renders $M$ a Hamiltonian $H$-space, with moment map obtained by composing $\mu$ with the natural map $\g^*\longrightarrow\mathfrak{h}^*$.
	\end{example}
	
	\begin{example}[Products]\label{Example: Product Hamiltonian}
		Let $M$ and $N$ be Hamiltonian $G$-spaces with respective moment maps $\mu:M\longrightarrow\g^*$ and $\nu:N\longrightarrow\g^*$. Given Example \ref{Example: Products}, we may consider the product of the symplectic manifolds $M$ and $N$. The diagonal action of $G$ on this product is Hamiltonian with moment map $(p,q)\mapsto\mu(p)+\nu(q)$. We write $M\times N$ for this new Hamiltonian $G$-space.
	\end{example} 
	
	\begin{example}[A concrete example]\label{Example: A concrete example}
		Consider the unitary group $\mathrm{U}(n)$. Its Lie algebra $\mathfrak{u}(n)$ consists of the skew-Hermitian $n\times n$ matrices. Let $\mathcal{H}(n)$ denote the real $\mathrm{U}(n)$-module of Hermitian $n\times n$ matrices, where $\mathrm{U}(n)$ acts on $\mathcal{H}(n)$ by conjugation. The pairing $$\mathfrak{u}(n)\otimes_{\mathbb{R}}\mathcal{H}(n)\longrightarrow\mathbb{R},\quad x\otimes y\mapsto-\frac{i}{n}\mathrm{tr}(xy)$$ is non-degenerate and $\mathrm{U}(n)$-invariant. As such, it induces a $\mathrm{U}(n)$-module isomorphism $\mathfrak{u}(n)^*\cong\mathcal{H}(n)$. We thereby identify coadjoint orbits of $\mathrm{U}(n)$ with $\mathrm{U}(n)$-orbits in $\mathcal{H}(n)$. Example \ref{Example: Poisson Hamiltonian actions} now implies that the latter orbits are Hamiltonian $\mathrm{U}(n)$-spaces.
		
		Set $n=2$. Let $\mathcal{O}\subset\mathcal{H}(2)$ be the $\mathrm{U}(2)$-orbit of $$x\coloneqq\begin{bmatrix} 1 & 0\\ 0 & -1\end{bmatrix}\in\mathcal{H}(2).$$ Note that
		$$\mathrm{S}^1=\mathrm{U}(1)\longrightarrow\mathrm{U}(2),\quad t\mapsto\begin{bmatrix} t & 0 \\ 0 & t^{-1}\end{bmatrix}$$ is a Lie group isomorphism onto a closed subgroup of $\mathrm{U}(2)$. Let us thereby view $\mathrm{S}^1$ as a closed subgroup of $\mathrm{U}(2)$. By Example \ref{Example: Restrictions}, $\mathcal{O}$ is a Hamiltonian $\mathrm{S}^1$-space. On the other hand, one finds that $$\mathcal{O}=\left\{\begin{bmatrix} x & z \\ \bar{z} & -x\end{bmatrix}:x\in\mathbb{R},\text{ }z\in\mathbb{C},\text{ and }x^2+\vert z\vert^2=1\right\}.$$ It follows that
		$$\mathrm{S}^2\longrightarrow\mathcal{O},\quad (a,b,c)\mapsto\begin{bmatrix} c & a+bi \\ a-bi & -c\end{bmatrix}$$ is a diffeomorphism. We may endow $\mathrm{S}^2$ with the Hamiltonian $\mathrm{S}^1$-space structure for which this diffeomorphism is an $\mathrm{S}^1$-equivariant symplectomorphism intertwining moment maps. The $\mathrm{S}^1$-action on $\mathrm{S}^2$ is then a double-speed rotation of $\mathrm{S}^2$ about the vertical axis (``$c$-axis") in $\mathbb{R}^3$. Identifying $\mathrm{Lie}(\mathrm{S}^1)^*=\mathfrak{u}(1)^*$ with $\mathcal{H}(1)=\mathbb{R}$ as above, one finds the moment map to be $$\mu:\mathrm{S}^2\longrightarrow\mathbb{R},\quad (a,b,c)\mapsto c.$$ This is the ``height function" on $\mathrm{S}^2$. 
		
		\vspace{10pt}
		
		\begin{center}
			\begin{tikzpicture}[x = 1cm, y = 1cm, scale= 1]
				\draw[ultra thick] (0,0) circle (3cm); 
				\node[above left= 2mm of {(-2,2.2)}] {$\mathrm{S}^2$};
				\draw[thick, red] (3,0) arc(0:-180:3cm and 0.35cm); 
				\draw[thick, dashed, red] (3,0) arc(0:180:3cm and 0.35cm);
				\draw[thick] (2.6,1.5) arc(0:-180:2.6cm and 0.25cm);
				\draw[thick, dashed] (2.6,1.5) arc(0:180:2.6cm and 0.25cm);
				\draw[thick] (2.6,-1.5) arc(0:-180:2.6cm and 0.27cm); 
				\draw[thick, dashed] (2.6,-1.5) arc(0:180:2.6cm and 0.27cm); 
				\draw[->, ultra thick] (4, 0) -- (7, 0); 
				\node[above = 2mm of {(5.5,0)}] {$\mu$};
				\draw[-, ultra thick] (8, -3) -- (8, 3); 
				\node[above right= 2mm of {(8.2,2.2)}] {$\mathrm{Lie} \left(\mathrm{S}^1\right)^*=\R$};
				\filldraw[red] (8, 0) circle (2.4pt);
				\filldraw[black] (8, 1.5) circle (2.4pt);
				\filldraw[black] (8, -1.5) circle (2.4pt);
				\filldraw[black] (8, 3) circle (2.4pt);
				\filldraw[black] (8, -3) circle (2.4pt);
			\end{tikzpicture}
		\end{center}    
	\end{example}
	
	\vspace{10pt}
	
	\subsection{Marsden--Weinstein reduction}\label{Subsection: Marsden--Weinstein reduction} Let $G$ be a Lie group with Lie algebra $\g$. Suppose that $(M,\omega,\mu)$ is a Hamiltonian $G$-space. Given $p\in M$, consider the differential $\mathrm{d}\mu_p:T_pM\longrightarrow\g^*$; it enjoys the following properties.
	
	\begin{lemma}\label{Lemma: Simple}
		Suppose that $p\in M$.
		\begin{itemize}
			\item[\textup{(i)}] We have $\ker(\mathrm{d}\mu_p)=(T_p(G\cdot p))^{\omega_p}$.
			\item[\textup{(ii)}] We have $\im(\mathrm{d}\mu_p)=(\g_p)^{\perp}$. 
		\end{itemize}
	\end{lemma}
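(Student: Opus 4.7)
The key input is the moment map condition $X_{\mu^{x}}=x_M$ for every $x\in\g$, which, after inverting $\omega^{\vee}$, is equivalent to the pointwise identity
\begin{equation*}
(\mathrm{d}\mu^{x})_p(v)=-\omega_p((x_M)_p,v)\quad\text{for all }v\in T_pM,\ x\in\g,\ p\in M.
\end{equation*}
This is the only analytic fact I will need; everything else is linear algebra.

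For part (i), I would observe that $v\in\ker(\mathrm{d}\mu_p)$ if and only if $(\mathrm{d}\mu_p(v))(x)=0$ for every $x\in\g$, and that $(\mathrm{d}\mu_p(v))(x)=(\mathrm{d}\mu^x)_p(v)$ by the definition of $\mu^x$. Substituting the displayed identity converts this into the condition $\omega_p((x_M)_p,v)=0$ for all $x\in\g$. Since $T_p(G\cdot p)=\{(x_M)_p:x\in\g\}$ (recalled in Subsection~\ref{Subsection: Group actions on manifolds}), this is precisely the defining condition for $v\in(T_p(G\cdot p))^{\omega_p}$.

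For part (ii), I would first establish the inclusion $\im(\mathrm{d}\mu_p)\subset(\g_p)^{\perp}$ by the same identity: if $\eta=\mathrm{d}\mu_p(v)$ and $x\in\g_p$, then $(x_M)_p=0$, so $\eta(x)=(\mathrm{d}\mu^x)_p(v)=-\omega_p(0,v)=0$. To upgrade this inclusion to an equality I would compare dimensions. From part (i) and the non-degeneracy of $\omega_p$ we get
\begin{equation*}
\dim\ker(\mathrm{d}\mu_p)=\dim(T_p(G\cdot p))^{\omega_p}=\dim T_pM-\dim T_p(G\cdot p),
\end{equation*}
so the rank-nullity theorem gives $\dim\im(\mathrm{d}\mu_p)=\dim T_p(G\cdot p)$. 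On the other hand, $T_p(G\cdot p)\cong\g/\g_p$ has dimension $\dim\g-\dim\g_p=\dim(\g_p)^{\perp}$. Thus $\im(\mathrm{d}\mu_p)$ and $(\g_p)^{\perp}$ have equal dimension, and the inclusion becomes an equality.

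There is no genuine obstacle here; the lemma is essentially a reformulation of the moment map condition, with (ii) following formally from (i) by annihilator/dimension bookkeeping. The only point requiring minor care is keeping the signs and the direction of the annihilator straight (annihilator of $\g_p$ in $\g^*$ versus symplectic orthogonal in $T_pM$), which is why I would prove (i) first and deduce (ii) by dimension count rather than by attempting a direct duality argument.
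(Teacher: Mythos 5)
Your proposal is correct and follows essentially the same route as the paper: part (i) by rewriting the moment map condition as $(\mathrm{d}\mu^x)_p(v)=\omega_p(v,(x_M)_p)$ and unwinding the kernel condition, and part (ii) by the easy inclusion plus the dimension count $\dim\im(\mathrm{d}\mu_p)=\dim T_p(G\cdot p)=\dim\g-\dim\g_p=\dim(\g_p)^{\perp}$. No issues.
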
 
	
	\begin{proof}
		Suppose that $v\in T_pM$. We have $v\in \ker(\mathrm{d}\mu_p)$ if and only if $(\mathrm{d}\mu_p(v))(x)=0$ for all $x\in\g$. On the other hand, observe that evaluation at a fixed $x\in\g$ defines a linear map $\g^*\longrightarrow\Bbbk$. We conclude that
		\begin{equation}\label{Equation: Useful}(\mathrm{d}\mu_p(v))(x)=(\mathrm{d}\mu^{x})_p(v)=\omega_p(v,(x_M)_p)\end{equation} for all $x\in\g$, where the second equation follows from the definition of a moment map. This implies that $v\in \ker(\mathrm{d}\mu_p)$ if and only if $\omega_p(v,(x_M)_p)=0$ for all $x\in\g$, i.e. $v\in (T_p(G\cdot p))^{\omega_p}$. This proves (i).
		
		We now prove (ii). The inclusion $\im(\mathrm{d}\mu_p)\subset (\g_p)^{\perp}$ follows immediately from \eqref{Equation: Useful}. On the other hand, (i) implies that
		$$\dim(\im(\mathrm{d}\mu_p))=\dim(M)-\dim ((T_p(G\cdot p))^{\omega_p})=\dim (T_p(G\cdot p))=\dim\g-\dim\g_p=\dim((\g_p)^{\perp}).$$ These last two sentences imply (ii).
	\end{proof}
	
	Suppose that $\xi\in\g^*$. Since $\mu$ is $G$-equivariant, $\mu^{-1}(\xi)$ is invariant under the action of $G_{\xi}\subset G$ on $M$. The $G_{\xi}$-action on $\mu^{-1}(\xi)$ has the following properties.
	
	\begin{proposition}\label{Proposition: Somewhat simple}
		Suppose that $\xi\in\g^*$.
		\begin{itemize}
			\item[\textup{(i)}] The vector $\xi$ is a regular value of $\mu$ if and only if the action of $G_{\xi}$ on $\mu^{-1}(\xi)$ is locally free.
			\item[\textup{(ii)}] In the case of \textup{(i)}, $\mu^{-1}(\xi)$ is a pre-symplectic submanifold of $M$ and $$T_p(\mu^{-1}(\xi))\cap (T_p(\mu^{-1}(\xi)))^{\omega_p}=T_p(G_{\xi}\cdot p)$$ for all $p\in\mu^{-1}(\xi)$. 
		\end{itemize}
	\end{proposition}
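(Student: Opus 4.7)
The proof hinges on Lemma \ref{Lemma: Simple}, which already translates the kernel and image of $\mathrm{d}\mu_p$ into $G$-orbit data. My strategy is to exploit equivariance of $\mu$ to relate all $G_\xi$-quantities to $G$-quantities and then invoke the lemma.

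For part (i), I would first observe that equivariance of $\mu$ forces $G_p \subset G_\xi$ for every $p \in \mu^{-1}(\xi)$: if $g \in G_p$ then $\xi = \mu(p) = \mu(g\cdot p) = \mathrm{Ad}^*_g(\xi)$. Consequently the $G_\xi$-stabilizer of such a $p$ coincides with $G_p$, and local freeness of the $G_\xi$-action at $p$ is equivalent to $\g_p = 0$. On the other hand, $\xi$ is a regular value of $\mu$ iff $\mathrm{d}\mu_p$ is surjective for every $p \in \mu^{-1}(\xi)$, and by Lemma \ref{Lemma: Simple}(ii) this says $(\g_p)^\perp = \g^*$, i.e. $\g_p = 0$. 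Chaining these equivalences gives (i).

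For part (ii), the assumption that $\xi$ is a regular value together with the regular value theorem makes $\mu^{-1}(\xi)$ a submanifold with $T_p\mu^{-1}(\xi) = \ker(\mathrm{d}\mu_p) = (T_p(G\cdot p))^{\omega_p}$ by Lemma \ref{Lemma: Simple}(i). Applying the symplectic-complement involution at $T_pM$ yields $(T_p\mu^{-1}(\xi))^{\omega_p} = T_p(G\cdot p)$, so the intersection in question equals $T_p(G\cdot p) \cap (T_p(G\cdot p))^{\omega_p}$. Now any element of $T_p(G\cdot p)$ has the form $(x_M)_p$ for some $x \in \g$. By \eqref{Equation: Useful}, $(x_M)_p \in (T_p(G\cdot p))^{\omega_p}$ iff $(x_M)_p \in \ker(\mathrm{d}\mu_p)$. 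Differentiating the equivariance identity $\mu(\mathrm{exp}(-tx)\cdot p) = \mathrm{Ad}^*_{\mathrm{exp}(-tx)}(\xi)$ at $t=0$ gives $\mathrm{d}\mu_p((x_M)_p) = -\mathrm{ad}^*_x(\xi)$, which vanishes precisely when $x \in \g_\xi$. Hence the intersection equals $\{(x_M)_p : x \in \g_\xi\} = T_p(G_\xi \cdot p)$.

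Pre-symplecticity of $\mu^{-1}(\xi)$ requires that the dimension of $T_p(G_\xi\cdot p)$ be independent of $p$. By (i), the $G_\xi$-action on $\mu^{-1}(\xi)$ is locally free, so $\dim T_p(G_\xi \cdot p) = \dim \g_\xi$, a number independent of $p \in \mu^{-1}(\xi)$. I expect the only subtle step to be the equivariance computation identifying $\mathrm{d}\mu_p((x_M)_p)$ with $-\mathrm{ad}^*_x(\xi)$, where one must be careful with the sign conventions used in the definition of $x_M$; the rest is formal manipulation from Lemma \ref{Lemma: Simple} and the symplectic-complement identity $(W^{\omega})^{\omega} = W$.
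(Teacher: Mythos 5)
Your proposal is correct and follows essentially the same route as the paper: part (i) reduces regularity to $\g_p=\{0\}$ via Lemma \ref{Lemma: Simple}(ii) and the inclusion $\g_p\subset\g_{\xi}$ forced by equivariance, and part (ii) uses Lemma \ref{Lemma: Simple}(i) together with $(W^{\omega})^{\omega}=W$ to reduce the tangent-space identity to $T_p(G\cdot p)\cap(T_p(G\cdot p))^{\omega_p}=T_p(G_{\xi}\cdot p)$, which is then checked by the same equivariance computation $\mathrm{d}\mu_p((x_M)_p)=-\mathrm{ad}^*_x(\xi)$. The concluding constant-rank argument for pre-symplecticity is also identical to the paper's.
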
 
	
	\begin{proof}
		Lemma \ref{Lemma: Simple}(ii) implies that $\xi$ is a regular value if and only if $\g_p=\{0\}$ for all $p\in\mu^{-1}(\xi)$. At the same time, the $G$-equivariance of $\mu$ tells us that $\g_p\subset\g_{\xi}$ for all $p\in\mu^{-1}(\xi)$. It follows that $\xi$ is a regular value if and only if $(\g_{\xi})_p=\{0\}$ for all $p\in\mu^{-1}(\xi)$. This proves (i). 
		
		We now verify the tangent space identity in (ii). To this end, fix $p\in\mu^{-1}(\xi)$. Lemma \ref{Lemma: Simple}(i) implies that $T_p(\mu^{-1}(\xi))=T_p(G\cdot p)^{\omega_p}$. It therefore suffices to prove that $$T_p(G\cdot p)\cap (T_p(G\cdot p))^{\omega_p}=T_p(G_{\xi}\cdot p).$$  On the other hand, suppose that $x\in\g$. The $G$-equivariance of $\mu$ yields $\mathrm{d}\mu_p((x_M)_p)=(x_{\g^*})_{\xi}$. This gives
		\begin{align*}
			(x_M)_p\in T_p(G\cdot p)^{\omega_p} & \Longleftrightarrow \omega_p((x_M)_p,(y_M)_p)=0\hspace{5pt}\forall y\in\g\\ 	
			& \Longleftrightarrow (\mathrm{d}\mu_p((x_M)_p))(y)=0\hspace{5pt}\forall y\in\g\\
			& \Longleftrightarrow (x_{\g^*})_{\xi}=0\\
			& \Longleftrightarrow x\in\g_{\xi},
		\end{align*}
		where the second line comes from \eqref{Equation: Useful}.
		The assertion $T_p(G\cdot p)\cap (T_p(G\cdot p))^{\omega_p}=T_p(G_{\xi}\cdot p)$ follows immediately from this calculation.
		
		It remains only to prove that $\mu^{-1}(\xi)$ is pre-symplectic. Since the $G_{\xi}$-action on $\mu^{-1}(\xi)$ is locally free, $\dim (T_p(G_{\xi}\cdot\xi))=\dim (G_{\xi})$ for all $p\in\mu^{-1}(\xi)$. The identity $T_p(\mu^{-1}(\xi))\cap (T_p(\mu^{-1}(\xi)))^{\omega_p}=T_p(G_{\xi}\cdot p)$ then implies that the dimension of $T_p(\mu^{-1}(\xi))\cap (T_p(\mu^{-1}(\xi)))^{\omega_p}$ does not depend on $p\in\mu^{-1}(\xi)$. The proof of (ii) is therefore complete.
	\end{proof} 
	
	This result should be considered in the context of Subsection \ref{Subsection: Pre-symplectic reduction}. More precisely, suppose that the equivalent conditions of Proposition \ref{Proposition: Somewhat simple}(i) are satisfied. Part (ii) of this proposition implies that $\mu^{-1}(\xi)/\mathcal{D}_{\mu^{-1}(\xi)}=\mu^{-1}(\xi)/G_{\xi}$. 
	
	\begin{corollary}\label{Corollary: Reduction}
		Let $\xi\in\g^*$ be such that the action of $G_{\xi}$ on $\mu^{-1}(\xi)$ is locally free. Consider the inclusion $j:\mu^{-1}(\xi)\longrightarrow M$ and quotient $\pi:\mu^{-1}(\xi)\longrightarrow\mu^{-1}(\xi)/G_{\xi}$. If $\mu^{-1}(\xi)/G_{\xi}$ is a manifold, then there exists a unique symplectic form $\overline{\omega}_{\xi}$ on $\mu^{-1}(\xi)/G_{\xi}$ satisfying $j^*\omega=\pi^*\overline{\omega}_{\xi}$.
	\end{corollary}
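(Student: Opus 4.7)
My plan is to derive this as a direct corollary of Theorem~\ref{Theorem: General pre-symplectic reduction} applied to the submanifold $N = \mu^{-1}(\xi) \subset M$. Proposition~\ref{Proposition: Somewhat simple} is deliberately set up so that its two parts furnish precisely the input that theorem requires; the remaining work consists of translating between the leaf-space quotient native to Theorem~\ref{Theorem: General pre-symplectic reduction} and the group-theoretic quotient appearing in the corollary statement.

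The first step is to pass from the local freeness hypothesis to the pre-symplecticity of $\mu^{-1}(\xi)$. By Proposition~\ref{Proposition: Somewhat simple}(i), local freeness of the $G_\xi$-action on $\mu^{-1}(\xi)$ is equivalent to $\xi$ being a regular value of $\mu$, which guarantees that $\mu^{-1}(\xi) \subset M$ is a genuine submanifold. Proposition~\ref{Proposition: Somewhat simple}(ii) then tells us this submanifold is pre-symplectic and pins down the characteristic distribution: $\mathcal{D}_{\mu^{-1}(\xi)}\big\vert_p = T_p(G_\xi \cdot p)$ for every $p \in \mu^{-1}(\xi)$.

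The second step is what I expect to be the main technical point, namely identifying the abstract leaf space in Theorem~\ref{Theorem: General pre-symplectic reduction} with the group-theoretic quotient $\mu^{-1}(\xi)/G_\xi$. Since $\mathcal{D}_{\mu^{-1}(\xi)}$ is spanned pointwise by the generating vector fields of $\g_\xi$, its integral leaves are precisely the connected components of the $G_\xi$-orbits. When $G_\xi$ is connected these two quotients coincide literally, and Theorem~\ref{Theorem: General pre-symplectic reduction} produces $\overline{\omega}_\xi$ directly. In the disconnected case, I would first apply Theorem~\ref{Theorem: General pre-symplectic reduction} to obtain a symplectic form $\widetilde{\omega}_\xi$ on the leaf space, then invoke $G_\xi$-invariance of $j^*\omega$ (which follows from the $G$-invariance of $\omega$ and the $G_\xi$-equivariance of $j$) to see that $\widetilde{\omega}_\xi$ is $\pi_0(G_\xi)$-invariant and therefore descends to a symplectic form $\overline{\omega}_\xi$ on the full quotient $\mu^{-1}(\xi)/G_\xi$.

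The final verification is uniqueness. Under the standing assumption that $\mu^{-1}(\xi)/G_\xi$ carries a manifold structure for which $\pi$ is a submersion, the pullback $\pi^*$ is injective on differential forms; hence the defining identity $j^*\omega = \pi^*\overline{\omega}_\xi$ determines $\overline{\omega}_\xi$ uniquely, completing the proof.
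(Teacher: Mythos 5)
Your proposal is correct and follows essentially the same route as the paper, which likewise deduces the corollary from Theorem~\ref{Theorem: General pre-symplectic reduction} together with Proposition~\ref{Proposition: Somewhat simple} and the identification $\mu^{-1}(\xi)/\mathcal{D}_{\mu^{-1}(\xi)}=\mu^{-1}(\xi)/G_{\xi}$. Your extra attention to the case of disconnected $G_{\xi}$ (leaves being components of orbits) is a point the paper passes over silently, and is a welcome refinement rather than a divergence.
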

	
	\begin{proof}
		This follows from Theorem \ref{Theorem: General pre-symplectic reduction}, Proposition \ref{Proposition: Somewhat simple}, and the discussion above. 
	\end{proof}
	
	\begin{definition}[Hamiltonian reduction by Lie groups]\label{Definition: Hamiltonian reduction by Lie groups}
		Suppose that $\xi\in\g^*$. Assume that the $G_{\xi}$-action on $\mu^{-1}(\xi)$ is locally free, and that $\mu^{-1}(\xi)/G_{\xi}$ is a manifold. The symplectic manifold $$M\sll{\xi}G\coloneqq \mathrm{Hred}_M(\mu^{-1}(\xi))=(\mu^{-1}(\xi)/G_{\xi},\overline{\omega}_{\xi})$$ is called the \textit{Hamiltonian reduction} of $M$ by $G$ at level $\xi$, and sometimes denoted $M\sll{\xi}G$. 
	\end{definition}
	
	To the authors' knowledge, $M\sll{\xi}G$ was first defined in Marsden and Weinstein's paper \cite{mar-wei:74} and Meyer's paper \cite{mey:73}. One includes an assumption sufficient to make $\mu^{-1}(\xi)/G_{\xi}$ a manifold, namely that $G_{\xi}$ acts freely and properly on $\mu^{-1}(\xi)$. This leads to the following construction.
	
	\begin{theorem}[Marsden--Weinstein reduction]\label{Theorem: Marsden-Weinstein reduction}
		Suppose that $\xi\in\g^*$. Consider the inclusion $j:\mu^{-1}(\xi)\longrightarrow M$ and quotient $\pi:\mu^{-1}(\xi)\longrightarrow\mu^{-1}(\xi)/G_{\xi}$. If $G_{\xi}$ acts freely and properly on $\mu^{-1}(\xi)$, then there exists a unique symplectic form $\overline{\omega}_{\xi}$ on $\mu^{-1}(\xi)/G_{\xi}$ satisfying $j^*\omega=\pi^*\overline{\omega}_{\xi}$.
	\end{theorem}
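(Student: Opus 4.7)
The plan is to deduce Theorem 2.10 as a direct consequence of Corollary 2.9, so the work reduces to verifying the two remaining hypotheses of that corollary: that the $G_{\xi}$-action on $\mu^{-1}(\xi)$ is locally free, and that $\mu^{-1}(\xi)/G_{\xi}$ carries a manifold structure.

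For the first hypothesis, I would observe that a free action is automatically locally free, since $(G_\xi)_p = \{e\}$ forces $(\mathfrak{g}_\xi)_p = \{0\}$. By Proposition 2.8(i) this already tells us that $\xi$ is a regular value of $\mu$, so $\mu^{-1}(\xi) \subset M$ is a submanifold, and by Proposition 2.8(ii) it is pre-symplectic with null distribution $\mathcal{D}_{\mu^{-1}(\xi)}$ whose leaves are the $G_{\xi}$-orbits.

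For the second hypothesis, I would invoke the standard principle that a free and proper action of a Lie group $H$ on a manifold $X$ produces a manifold quotient $X/H$ with quotient map a submersion. Applied to the free and proper $G_{\xi}$-action on $\mu^{-1}(\xi)$, this gives the manifold $\mu^{-1}(\xi)/G_{\xi}$ together with a submersion $\pi$. Since the orbits of this action coincide with the leaves of $\mathcal{D}_{\mu^{-1}(\xi)}$ (again by Proposition 2.8(ii)), this manifold structure is precisely the one demanded by Corollary 2.9.

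Having verified both hypotheses, Corollary 2.9 applies directly and yields the unique symplectic form $\overline{\omega}_\xi$ on $\mu^{-1}(\xi)/G_\xi$ satisfying $j^*\omega = \pi^*\overline{\omega}_\xi$. The main (indeed only) non-bookkeeping step is the invocation that a free and proper Lie group action produces a manifold quotient; this is standard and can be cited rather than reproved. The verification is otherwise a straightforward chaining together of the preceding results.
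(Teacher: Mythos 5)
Your proposal is correct and matches the route the paper itself takes: the paper presents Marsden--Weinstein reduction as an immediate consequence of Corollary \ref{Corollary: Reduction}, noting that freeness and properness are exactly the hypotheses one adds to guarantee that $\mu^{-1}(\xi)/G_{\xi}$ is a manifold (and freeness trivially implies local freeness). Your chaining through Proposition \ref{Proposition: Somewhat simple} and the standard quotient-manifold theorem is precisely the intended argument.
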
 
	
	\subsection{Some identities in Hamiltonian reduction}\label{Subsection: Some identities} We now derive two ``identities" in the theory of Hamiltonian reduction; see Equations \ref{Equation: Shifting trick} and \ref{Equation: Cotangent identity}. Let $G$ be a Lie group with Lie algebra $\g$. Suppose that $(M,\omega,\mu)$ is a Hamiltonian $G$-space. Recall from Example \ref{Example: Poisson Hamiltonian actions} that the coadjoint orbit $\mathcal{O}\coloneqq G\cdot\xi\subset\g^*$ of $\xi\in\g^*$ is a Hamiltonian $G$-space. Examples \ref{Examples: Opposites Hamiltonian} and \ref{Example: Product Hamiltonian} then explain that $M\times\mathcal{O}^{\mathrm{op}}$ is a Hamiltonian $G$-space with moment map $$\nu:M\times\mathcal{O}^{\mathrm{op}}\longrightarrow\g^*,\quad (m,\zeta)\mapsto \mu(m)-\zeta.$$
	
	\begin{proposition}[Shifting trick]
		Suppose that $\xi\in\g^*$ and set $\mathcal{O}\coloneqq G\cdot\xi\subset\g^*$. Let $\nu:M\times\mathcal{O}^{\mathrm{op}}\longrightarrow\g^*$ be as defined above.
		\begin{itemize}
			\item[\textup{(i)}] The subgroup $G_{\xi}$ acts locally freely on $\mu^{-1}(\xi)$ if and only $G$ acts locally freely on $\nu^{-1}(0)$.
			\item[\textup{(ii)}] The quotient $\mu^{-1}(\xi)/G_{\xi}$ is a manifold if and only if $\nu^{-1}(0)/G$ is a manifold.
			\item[\textup{(iii)}] If the conditions in \textup{(i)} and \textup{(ii)} are satisfied, then the map $$\mu^{-1}(\xi)\longrightarrow M\times\mathcal{O},\quad p\mapsto (p,\xi)$$ descends to a symplectomorphism $M\sll{\xi} G\overset{\cong}\longrightarrow (M\times\mathcal{O}^{\mathrm{op}})\sll{0}G$.
		\end{itemize}
	\end{proposition}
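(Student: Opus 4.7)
The plan is to identify $M\sll{\xi}G$ with $(M\times\mathcal{O}^{\mathrm{op}})\sll{0}G$ through the natural map $\iota:\mu^{-1}(\xi)\longrightarrow\nu^{-1}(0)$ sending $p\mapsto (p,\xi)$, and then verify that the symplectic structures correspond. The first observation to make is that every point of $\nu^{-1}(0)$ lies in the $G$-orbit of $\iota(\mu^{-1}(\xi))$: if $(p,\zeta)\in\nu^{-1}(0)$, then $\zeta=\mathrm{Ad}^*_g\xi$ for some $g\in G$, and $G$-equivariance of $\mu$ yields $\mu(g^{-1}\cdot p)=\mathrm{Ad}^*_{g^{-1}}\zeta=\xi$, so $(g^{-1}\cdot p,\xi)\in\iota(\mu^{-1}(\xi))$. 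Hence $G\cdot\iota(\mu^{-1}(\xi))=\nu^{-1}(0)$.

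The next step is a stabilizer computation. For $p\in\mu^{-1}(\xi)$, equivariance of $\mu$ forces $G_p\subset G_\xi$, so the $G$-stabilizer of $(p,\xi)$ for the diagonal action on $M\times\mathcal{O}^{\mathrm{op}}$ equals $G_p\cap G_\xi=G_p$, which coincides with the $G_\xi$-stabilizer of $p\in\mu^{-1}(\xi)$. Combined with the orbit-filling property above, this yields a canonical bijection $\mu^{-1}(\xi)/G_\xi\longrightarrow\nu^{-1}(0)/G$ and establishes (i): discreteness of the $G_\xi$-stabilizers along $\mu^{-1}(\xi)$ is equivalent to discreteness of the $G$-stabilizers along $\nu^{-1}(0)$. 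For (ii), under the local-freeness hypothesis both $\mu^{-1}(\xi)$ and $\nu^{-1}(0)$ are pre-symplectic submanifolds by Proposition \ref{Proposition: Somewhat simple}(ii), with leaves the $G_\xi$-orbits and $G$-orbits respectively; the set-theoretic bijection above then lifts to a diffeomorphism by choosing a local slice for the $G$-action on $\mathcal{O}$ through $\xi$ to produce smooth local inverses.

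Finally, (iii) will follow from a brief pullback calculation. Write $\pi_1,\pi_2$ for the two quotient maps and $j:\nu^{-1}(0)\hookrightarrow M\times\mathcal{O}^{\mathrm{op}}$, $j':\mu^{-1}(\xi)\hookrightarrow M$ for the closed inclusions. Corollary \ref{Corollary: Reduction} characterizes the reduced symplectic forms by $\pi_2^*\overline{\Omega}_0=j^*(\omega\oplus(-\omega_\mathcal{O}))$ and $\pi_1^*\overline{\omega}_\xi=j'^*\omega$. Pulling the first equation back along $\iota$ and noting that $j\circ\iota$ is constant in the $\mathcal{O}$-factor, the $\omega_\mathcal{O}$ contribution vanishes, giving $\iota^*\pi_2^*\overline{\Omega}_0=j'^*\omega=\pi_1^*\overline{\omega}_\xi$. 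Since $\pi_2\circ\iota=\Phi\circ\pi_1$ for the induced map $\Phi:M\sll{\xi}G\longrightarrow (M\times\mathcal{O}^{\mathrm{op}})\sll{0}G$, and $\pi_1$ is a surjective submersion, this forces $\Phi^*\overline{\Omega}_0=\overline{\omega}_\xi$. The main obstacle I anticipate is the smooth-structure promotion in (ii): the $G$-action on the possibly non-locally-closed coadjoint orbit $\mathcal{O}$ need not admit obvious global slices, so extracting a smooth inverse from the set-theoretic bijection requires careful local analysis near $\xi$.
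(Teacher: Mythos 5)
The paper states this proposition without proof (it is a survey and simply records the shifting trick), so there is no in-paper argument to compare against; your proposal supplies the standard proof and it is essentially correct. The orbit-filling computation, the identification of stabilizers $G_{(p,\xi)}=G_p\cap G_\xi=G_p=(G_\xi)_p$, and the pullback calculation in (iii) --- where the $\omega_{\mathcal{O}}$ term dies because $\pi_{\mathcal{O}}\circ j\circ\iota$ is constant and injectivity of $\pi_1^*$ finishes the job --- are all right. The one place you correctly flag as needing work, the promotion of the set-theoretic bijection in (ii) to the statement ``one quotient is a manifold iff the other is'' in the paper's sense (existence of a manifold structure making the quotient map a submersion), does go through by the route you indicate: a local section $s$ of the submersion $G\longrightarrow G/G_\xi\cong\mathcal{O}$ near any $\zeta_0$ gives the smooth local retraction $(p,\zeta)\mapsto s(\zeta)^{-1}\cdot p\in\mu^{-1}(\xi)$, and a rank count (using $\dim\nu^{-1}(0)/G=\dim M-\dim G-\dim G_\xi=\dim\mu^{-1}(\xi)/G_\xi$) shows that $\pi_2\circ\iota$ and $\pi_1\circ(s(\zeta)^{-1}\cdot\,)$ are submersions whose fibers are exactly the relevant orbits, so each quotient's manifold structure can be transported to the other. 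Your worry about $\mathcal{O}$ being non-locally-closed is harmless: the slice is taken in the intrinsic homogeneous-space structure $\mathcal{O}\cong G/G_\xi$, which is the structure with which $\mathcal{O}$ enters the proposition in the first place, so no embeddedness in $\g^*$ is needed.
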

	
	In short, one has the ``identity" \begin{equation}\label{Equation: Shifting trick} M\sll{\xi}G\cong(M\times\mathcal{O}^{\mathrm{op}})\sll{0}G\end{equation} for $\mathcal{O}=G\cdot\xi$. We conclude that Hamiltonian reductions at an arbitrary level can be obtained as Hamiltonian reductions at level $0$. This explains the nomenclature ``shifting trick"; one simply ``shifts" reduction at level $\xi$ to reduction at level zero.
	
	We now explain that $M$ can be obtained as a Hamiltonian reduction by $G$. To this end, note that $G\times G$ acts on $G$ by the formula
	$$(g_1,g_2)\cdot h=g_1hg_{2}^{-1},\quad (g_1,g_2)\in G\times G,\text{ }h\in G.$$ Example \ref{Example: Cotangent bundles of G-manifolds} implies that the cotangent lift of this $G\times G$-action to $T^*G$ renders $T^*G$ a Hamiltonian $G\times G$-space. On the other hand, we may consider the left trivialization of $T^*G$; this is the isomorphism
	$$T^*G\longrightarrow G\times\g^*,\quad (g,\phi)\mapsto (g,\phi\circ (\mathrm{d}L_g)_e)$$ of vector bundles over $G$, where $L_g:G\longrightarrow G$ is left multiplication by $g\in G$. We may equip $G\times\g^*$ with the Hamiltonian $G\times G$-space structure for which this vector bundle isomorphism is an isomorphism of Hamiltonian $G\times G$-spaces. This allows us to freely identify $T^*G$ and $G\times\g^*$ as Hamiltonian $G\times G$-spaces. The $G\times G$-action and moment map on $T^*G$ are then given by
	\begin{equation}\label{Equation: Product action}(g_1,g_2)\cdot (h,\xi)=(g_1hg_2^{-1},\mathrm{Ad}_{g_2}^*(\xi)),\quad (g_1,g_2)\in G\times G,\text{ }(h,\xi)\in G\times\g^*=T^*G\end{equation} and
	$$(\nu_1,\nu_2):T^*G=G\times\g^*\longrightarrow\g^*\oplus\g^*,\quad (g,\xi)\mapsto(\mathrm{Ad}_g^*(\xi),-\xi),$$ respectively.
	
	Let us restrict the Hamiltonian $G\times G$-space structure to the subgroup $G=\{e\}\times G\subset G\times G$. In this way, $T^*G$ is a Hamiltonian $G$-space with moment map $\nu_2$. The product $M\times T^*G$ is then a Hamiltonian $G$-space with moment map $$\phi:M\times T^*G\longrightarrow\g^*,\quad (p,(g,\xi))\mapsto \mu(p)-\xi.$$ 
	
	\begin{proposition}
		Let $\phi:M\times T^*G\longrightarrow\g^*$ be as defined above.
		\begin{itemize}
			\item[\textup{(i)}] The group $G$ acts freely on $\phi^{-1}(0)$.
			\item[\textup{(ii)}] The quotient $\phi^{-1}(0)/G$ is a manifold.
		\end{itemize}
	\end{proposition}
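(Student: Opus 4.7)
The plan is to unwind the $G$-action via the left trivialization $T^*G \cong G \times \g^*$, and then exhibit $\phi^{-1}(0)$ as a trivial principal $G$-bundle over $M$.

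First, I would restrict the action formula \eqref{Equation: Product action} to the subgroup $\{e\} \times G \subset G \times G$, obtaining the $G$-action on $T^*G = G \times \g^*$ as $g \cdot (h, \xi) = (hg^{-1}, \mathrm{Ad}_g^*(\xi))$. Combined with the action on $M$, the diagonal $G$-action on $M \times T^*G$ then reads
$$g \cdot (p, h, \xi) = \bigl(g \cdot p,\, hg^{-1},\, \mathrm{Ad}_g^*(\xi)\bigr).$$
Claim (i) follows immediately: any $g$ fixing $(p, h, \xi)$ must in particular satisfy $hg^{-1} = h$, forcing $g = e$. In fact this shows freeness on the whole of $M \times T^*G$, so freeness on $\phi^{-1}(0)$ comes for free.

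For (ii), the constraint $\phi(p, h, \xi) = \mu(p) - \xi = 0$ forces $\xi = \mu(p)$, so that
$$\iota: M \times G \longrightarrow \phi^{-1}(0),\quad (p, h) \longmapsto (p, h, \mu(p))$$
is a $G$-equivariant diffeomorphism, where $G$ acts on $M \times G$ by $g \cdot (p, h) = (g \cdot p, hg^{-1})$. It therefore suffices to check that the quotient of $M \times G$ by this action is a manifold. My strategy is to consider the $G$-invariant map
$$\Psi: M \times G \longrightarrow M,\quad (p, h) \longmapsto h \cdot p,$$
and verify that (a) $\Psi$ is a submersion, and (b) its fibers coincide exactly with the $G$-orbits. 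Item (a) reduces by $G$-equivariance of the differential to checking at points of the form $(p, e)$, where the first coordinate alone already surjects onto $T_pM$; item (b) follows by solving for $g$ in the action formula. Together these exhibit $\Psi$ as a trivial principal $G$-bundle with section $p \mapsto (p, e)$, so the quotient inherits a manifold structure diffeomorphic to $M$, and the same follows for $\phi^{-1}(0)/G$ via $\iota$.

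There is no real conceptual obstacle here; the argument is essentially bookkeeping once the conventions are pinned down. The only mild subtlety is keeping track of which factor of $G \times G$ one is restricting to, since the left and right $G$-actions on $T^*G$ behave asymmetrically under the left trivialization.
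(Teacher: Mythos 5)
Your argument is correct and complete: the freeness claim follows from the right-translation component of the action, and the identification $\phi^{-1}(0)\cong M\times G$ together with the orbit map $(p,h)\mapsto h\cdot p$ exhibits the quotient as a (trivial) principal $G$-bundle over $M$, which meets the paper's criterion that the quotient carry a manifold structure making the projection a submersion. Note that the paper states this proposition without proof, so there is no argument to compare against; your route is the natural one, and it has the added benefit of directly producing the diffeomorphism underlying the subsequent identification $M\cong (M\times T^*G)\sll{0}G$ in Proposition \ref{Proposition: Cotangent bundle identity}.
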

	
	This result allows us to form the symplectic manifold $(M\times T^*G)\sll{0} G$. At the same time, we may define the map
	$$\varphi:M\longrightarrow (M\times T^*G)\sll{0}G,\quad p\mapsto (p,(e,\mu(p))).$$ Note that $\phi$ is equivariant with respect to the $G$-action
	$$g\cdot [(p,(h,\xi))]\coloneqq [(p,(gh,\xi))],\quad g\in G,\text{ }[(p,(h,\xi))]\in(M\times T^*G)\sll{0}G$$ on $(M\times T^*G)\sll{0}G$. Let us also consider the map $$\psi:(M\times T^*G)\sll{0}G\longrightarrow\g^*,\quad [(p,(g,\xi))]\mapsto\mathrm{Ad}_g^*(\xi).$$
	
	\begin{proposition}\label{Proposition: Cotangent bundle identity}
		Let $\varphi:M\longrightarrow (M\times T^*G)\sll{0}G$ and $\psi:(M\times T^*G)\sll{0}G\longrightarrow\g^*$ be as defined above.
		\begin{itemize}
			\item[\textup{(i)}] The above-defined $G$-action on $(M\times T^*G)\sll{0}G$ is Hamiltonian with moment map $\psi$. 
			\item[\textup{(ii)}] The map $\varphi$ is an isomorphism of Hamiltonian $G$-spaces, where the Hamiltonian $G$-space structure on $(M\times T^*G)\sll{0}G$ comes from \textup{(i)}.
		\end{itemize}
	\end{proposition}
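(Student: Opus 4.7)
The plan is to construct the Hamiltonian $G$-structure on $(M\times T^*G)\sll{0}G$ by descending the ``other'' $G$-factor through the reduction, and then to verify that $\varphi$ is a symplectomorphism of Hamiltonian $G$-spaces via a careful identification of $\phi^{-1}(0)$ with $M\times G$.

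\textbf{Step 1: The descent of a second $G$-action.} I would first upgrade $M\times T^*G$ to a Hamiltonian $G\times G$-space, where the first factor $G\times\{e\}$ acts trivially on $M$ and by left multiplication on $T^*G$ (using \eqref{Equation: Product action}), and the second factor $\{e\}\times G$ is the one used to perform the reduction. By Example \ref{Example: Product Hamiltonian} and the formulas for the $G\times G$-moment map on $T^*G$, the first factor acts with moment map $(p,(h,\xi))\mapsto \mathrm{Ad}_h^*(\xi)$. Since the two $G$-factors commute, the first factor descends to a $G$-action on $(M\times T^*G)\sll{0}G$; one checks directly that the induced action agrees with the formula $g\cdot [(p,(h,\xi))]=[(p,(gh,\xi))]$ in the statement, and that its moment map is precisely $\psi$. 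This gives (i).

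\textbf{Step 2: Identifying $\phi^{-1}(0)/G$ with $M$.} From the formula $\phi(p,(h,\xi))=\mu(p)-\xi$, the level set $\phi^{-1}(0)$ equals $\{(p,(h,\mu(p))):p\in M,\ h\in G\}$, and the map $(p,(h,\mu(p)))\mapsto (p,h)$ gives a $G$-equivariant diffeomorphism $\phi^{-1}(0)\cong M\times G$ intertwining the reduction action with $g'\cdot(p,h)=(g'\cdot p,h(g')^{-1})$. Every orbit of this action on $M\times G$ contains a unique point with $G$-component equal to $e$, namely $(h\cdot p, e)$. Consequently $\varphi:M\to (M\times T^*G)\sll{0}G$ is a bijection. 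Checking equivariance reduces to the identity $g\cdot (p,(g,\mu(p)))=(g\cdot p,(e,\mathrm{Ad}_g^*(\mu(p))))=(g\cdot p,(e,\mu(g\cdot p)))$ inside $\phi^{-1}(0)$; the moment map identity $\psi\circ\varphi=\mu$ is immediate from $\psi([p,(e,\mu(p))])=\mathrm{Ad}_e^*(\mu(p))=\mu(p)$.

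\textbf{Step 3: The symplectic pullback.} I would finish by factoring $\varphi=\pi\circ\tilde\varphi$, where $\tilde\varphi:M\to\phi^{-1}(0)$ sends $p\mapsto (p,(e,\mu(p)))$, and invoking Corollary \ref{Corollary: Reduction} to write $\pi^*\overline{\omega}_0=j^*(\omega_M+\omega_{T^*G})$. It then suffices to prove that the composition $j\circ\tilde\varphi:M\to M\times T^*G$ pulls $\omega_M+\omega_{T^*G}$ back to $\omega_M$. The $M$-factor contributes $\omega_M$ tautologically. For the $T^*G$-factor, the map $p\mapsto (e,\mu(p))$ is constant in the base direction $G$ of $T^*G\cong G\times\g^*$, so its image lies in the fibre over $e$; since the tautological $1$-form $\alpha$ on $T^*G$ factors through the base projection, $\alpha$ pulls back to $0$, and hence so does $\omega_{T^*G}=-d\alpha$. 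This gives $\varphi^*\overline{\omega}_0=\omega_M$, and combined with the dimension count $\dim((M\times T^*G)\sll{0}G)=\dim M+2\dim G-2\dim G=\dim M$ shows $\varphi$ is a symplectomorphism.

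\textbf{Main obstacle.} The only non-formal step is Step 3: one must be attentive to the precise form of $\omega_{T^*G}$ under the left-trivialization $T^*G\cong G\times\g^*$. The observation that the section $p\mapsto (e,\mu(p))$ lies entirely in a cotangent fibre, together with the fact that the tautological $1$-form vanishes on vertical vectors of $T^*G\to G$, is what makes the pullback of $\omega_{T^*G}$ collapse to zero; any alternative route (for instance, a direct Marsden--Weinstein pairing computation on the quotient) would require substantially more calculation.
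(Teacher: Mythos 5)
Your proof is correct; the paper states Proposition \ref{Proposition: Cotangent bundle identity} without giving a proof, and your argument is the standard one it implicitly relies on. The three ingredients you use --- descending the commuting $G\times\{e\}$-action and its moment map $\nu_1$ to the reduction, identifying $\phi^{-1}(0)\cong M\times G$ so that each orbit has the unique normal form $(h\cdot p,e)$, and observing that $p\mapsto(e,\mu(p))$ lands in a single cotangent fibre on which the tautological $1$-form (hence $\omega_{T^*G}$) pulls back to zero, giving $\varphi^*\overline{\omega}_0=\omega_M$ --- are exactly what is needed, and the equivariance check $g\cdot(p,(g,\mu(p)))=(g\cdot p,(e,\mu(g\cdot p)))$ is the right computation.
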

	
	This result may be loosely interpreted as the ``identity" \begin{equation}\label{Equation: Cotangent identity} M\cong(M\times T^*G)\sll{0}G\end{equation} for all Hamiltonian $G$-spaces $M$.
	
	\subsection{The orbit-type stratification for a proper action}\label{Subsection: The orbit type}
	Let $G$ be a connected Lie group. Note that $G$ acts on the set of its closed subgroups by conjugation. Let us write $[H]$ for the orbit of a closed subgroup $H\subset G$ under this action. On the other hand, let $M$ be a topological space with a proper $G$-action. Consider the $G$-invariant subset $$M_{[H]}\coloneqq\{p\in M:G_p\in[H]\}\subset M.$$ The connected components of $M_{[H]}$ are necessarily $G$-invariant. 
	
	\begin{definition}[Orbit-type stratifications]\label{Definition: Orbit-type stratifications}
		Retain the setup from above.
		\begin{itemize}
			\item[\textup{(i)}] A subset $X\subset M$ is called an \textit{orbit-type stratum} if $X$ is a connected component of $M_{[H]}$ for some closed subgroup $H\subset G$ with $M_{[H]}\neq\emptyset$.
			\item[\textup{(ii)}] A subset $Y\subset M/G$ is called an \textit{orbit-type stratum} if $Y=X/G$ for some orbit-type stratum $X\subset M$.
			\item[\textup{(iii)}] One defines the \textit{orbit-type stratifications} of $M$ and $M/G$ to be 
			$$M=\bigcup_{i\in I}M_i\quad\text{and}\quad M/G=\bigcup_{i\in I}(M_i/G),$$
			where $I$ indexes the orbit-type strata in $M$.
		\end{itemize}
	\end{definition}
	
	We refer the interested reader to \cite[Definition 1.7]{sja-ler:91} for a precise definition of a stratification. At the same time, we record the following properties of orbit-type stratifications.
	
	\begin{proposition}
		Assume that $M$ is a manifold carrying a proper $G$-action. Suppose that $i\in I$.
		\begin{itemize}
			\item[\textup{(i)}] The orbit-type stratum $M_i$ is a submanifold of $M$.
			\item[\textup{(ii)}] The orbit-type stratum $M_i/G$ is a manifold.
		\end{itemize}
	\end{proposition}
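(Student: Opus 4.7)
The plan is to reduce everything to the slice theorem for proper actions, which provides a tractable linear model for the $G$-action near any point. Fix a point $p \in M_i$ and set $H \coloneqq G_p$, so that $M_i$ is a connected component of $M_{[H]}$.

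First, I would invoke Palais's slice theorem: since the $G$-action on $M$ is proper, there exist a finite-dimensional $H$-representation $V$, an $H$-invariant open neighborhood $S$ of $0 \in V$, and a $G$-equivariant open embedding
\[
\Phi : G \times_H S \hookrightarrow M, \qquad \Phi([e, 0]) = p,
\]
where the twisted product $G \times_H S$ is the quotient of $G \times S$ by the free and proper $H$-action $h \cdot (g, s) \coloneqq (gh^{-1}, h \cdot s)$. This twisted product is a smooth manifold, $G$-equivariantly fibered over $G/H$ with fiber $S$, and provides a tubular neighborhood of the orbit $G \cdot p$ in $M$.

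Next, I would identify $M_{[H]}$ inside this local model. The $G$-stabilizer of $[g, s]$ is $g G_s g^{-1}$, where $G_s$ is the $H$-isotropy of $s$; the key property built into the slice theorem is that $G_s \subset H$ for every $s \in S$. Consequently $g G_s g^{-1} \in [H]$ if and only if $G_s = H$, equivalently $s \in S^H$, the fixed-point set of $H$ on $S$. Because $H$ acts linearly on $V$, the subset $S^H = S \cap V^H$ is open in a linear subspace of $V$, hence a smooth submanifold of $S$. It follows that $\Phi^{-1}(M_{[H]}) = G \times_H S^H$, and since $H$ acts trivially on $S^H$ this twisted product is simply $(G/H) \times S^H$. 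This exhibits $M_{[H]}$ as a submanifold of $M$ near $p$, and taking the connected component through $p$ realizes $M_i$ as a submanifold of $M$, proving (i). For (ii), the composition $G \times_H S^H = (G/H) \times S^H \to S^H$ given by $[g, s] \mapsto s$ identifies $M_i/G$ locally with an open subset of $S^H$, endowing the quotient with a smooth structure. Transition data between charts arising from different slices is smooth because it is induced by the smooth $G$-action on $M_i$, and properness of the action ensures that $M_i/G$ is Hausdorff (and second countability is inherited from $M$).

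The main obstacle is the slice theorem itself, which is a nontrivial technical input for proper actions of general Lie groups. Once it is accepted as a black box, the remaining work is the elementary observation that $G_s \subset H$ forces $G_s = H$ precisely on $S^H$, together with the routine chart-compatibility check needed to upgrade the local quotient model to a global smooth structure on $M_i/G$.
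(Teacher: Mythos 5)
Your argument is correct, and it is the canonical proof of this statement. Note that the paper itself offers no proof: the proposition is recorded as a standard fact from the theory of proper Lie group actions (the sort of statement one finds in Duistermaat--Kolk or in \cite{sja-ler:91}), so there is no in-text argument to compare against. Your route through Palais's slice theorem, identifying $M_{[H]}$ in the local model $G\times_H S$ with $G\times_H S^H=(G/H)\times S^H$ and reading off the quotient as $S^H$, is exactly how the standard references establish both parts. One step deserves to be made explicit: you assert that $gG_sg^{-1}\in[H]$ forces $G_s=H$, which requires the lemma that a closed subgroup of $H$ that is conjugate in $G$ to $H$ must equal $H$. This is where properness enters a second time --- it guarantees that $H=G_p$ is \emph{compact}, and for a compact Lie group a proper closed subgroup cannot be conjugate to the whole group (compare dimensions and numbers of connected components). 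Compactness of $H$ is also what lets you choose the slice representation $V$ to be linear (indeed orthogonal) in the first place. With that small addition your proof is complete.
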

	
	\subsection{A generalization of Marsden--Weinstein reduction}\label{Subsection: A generalization}
	Let $(M,\omega,\mu)$ be a Hamiltonian $G$-space for a compact connected real Lie group $G$. One consequence of compactness is that $G_{\xi}$ acts properly on $\mu^{-1}(\xi)$ for all $\xi\in\g^*$. On the other hand, $G_{\xi}$ need not act freely on $\mu^{-1}(\xi)$ for a given $\xi\in\g^*$. This raises the question of whether Marsden--Weinstein reduction can be generalized to non-free actions of stabilizer subgroups on level sets of the moment map. Sjamaar and Lerman provide an affirmative answer in their seminal paper \cite{sja-ler:91}. The idea is to realize $\mu^{-1}(\xi)/G_{\xi}$ as a so-called \textit{stratified symplectic space}. We now provide some details.
	
	Fix $\xi\in\g^*$. The topological subspace $\mu^{-1}(\xi)\subset M$ carries an action of $G_{\xi}$. This fact allows us to consider the orbit-type stratifications $$\mu^{-1}(\xi)=\bigcup_{i\in I}\mu^{-1}(\xi)_i\quad\text{and}\quad \mu^{-1}(\xi)/G_{\xi}=\bigcup_{i\in I}(\mu^{-1}(\xi)_i/G_{\xi}).$$
	
	\begin{theorem}[Sjamaar--Lerman]
		Suppose that $\xi\in\g^*$ and $i\in I$.
		\begin{itemize}
			\item[\textup{(i)}] The subset $\mu(\xi)_i\subset M$ is a submanifold.
			\item[\textup{(ii)}] The quotient $\mu^{-1}(\xi)_i/G_{\xi}$ is a manifold.
			\item[\textup{(ii)}] There exists a unique symplectic form $(\overline{\omega}_{\xi})_i$ on $\mu^{-1}(\xi)_i/G_{\xi}$ satisfying $j^*\omega=\pi^*(\overline{\omega}_{\xi})_i$, where $j:\mu^{-1}(\xi)_i\longrightarrow M$ is the inclusion and $\pi:\mu^{-1}(\xi)_i\longrightarrow\mu^{-1}(\xi)_i/G_{\xi}$ is the quotient. 
		\end{itemize}
	\end{theorem}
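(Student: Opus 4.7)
The strategy is to realize $\mu^{-1}(\xi)_i/G_\xi$ locally as an ordinary Marsden--Weinstein reduction, so as to invoke Theorem \ref{Theorem: Marsden-Weinstein reduction}. I would fix a closed subgroup $H\subset G$ representing the conjugacy class of stabilizers that indexes the stratum, so in particular $H\subset G_\xi$. The plan is to transfer the problem from $\mu^{-1}(\xi)_i$, where $G_\xi$ acts with stabilizers conjugate to $H$, to the fixed-point locus $M^H$, on which the residual group $L\coloneqq N_G(H)/H$ acts freely.

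The first substantive step is to prove that $M^H\coloneqq\{p\in M:h\cdot p=p\text{ for all }h\in H\}$ is a symplectic submanifold of $M$. Compactness of $H$ permits an averaging argument that produces an $H$-invariant almost complex structure compatible with $\omega$; this makes $(T_pM)^H$ a complex, hence symplectic, subspace of $T_pM$ at every $p\in M^H$. A standard application of the tube theorem then shows that $M^H$ is smooth with $\omega\big\vert_{M^H}$ symplectic. Next, I would note that the normalizer $N_G(H)$ preserves $M^H$ and that its action descends to $L$. Since $H\subset G_\xi$ fixes $\xi$, composing $\mu\big\vert_{M^H}$ with the projection $\g^*\to\mathfrak{l}^*$ yields a moment map $\mu_H:M^H\to\mathfrak{l}^*$ satisfying $\mu_H^{-1}(\overline{\xi})=\mu^{-1}(\xi)\cap M^H$, where $\overline{\xi}\in\mathfrak{l}^*$ is the image of $\xi$.

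With these pieces in place, I would restrict to the open subset $M_{(H)}\coloneqq\{p\in M^H:G_p=H\}\subset M^H$, on which $L$ acts freely and properly. Theorem \ref{Theorem: Marsden-Weinstein reduction} applied to this Hamiltonian $L$-space at level $\overline{\xi}$ immediately produces a smooth manifold $(\mu_H^{-1}(\overline{\xi})\cap M_{(H)})/L$ carrying a unique symplectic form pulling back to $\omega\big\vert_{M^H}$. Each $G_\xi$-orbit in $\mu^{-1}(\xi)_i$ meets the slice $M_{(H)}$ in a single $L$-orbit, so the inclusion $M_{(H)}\hookrightarrow M$ descends to a bijection
\[
\bigl(\mu_H^{-1}(\overline{\xi})\cap M_{(H)}\bigr)/L\;\xrightarrow{\;\cong\;}\;\mu^{-1}(\xi)_i/G_\xi,
\]
which yields the manifold structure in (ii), and transporting the symplectic form yields (iii). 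For (i), the tube theorem locally models the stratum as $G\times_{N_G(H)}M_{(H)}$, exhibiting $\mu^{-1}(\xi)_i$ as a submanifold of $M$.

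The main obstacle is verifying intrinsically that the form so obtained satisfies $j^*\omega=\pi^*(\overline{\omega}_\xi)_i$ on $\mu^{-1}(\xi)_i$ itself, rather than only after transport through $M^H$. This amounts to proving a non-free analogue of Proposition \ref{Proposition: Somewhat simple}(ii): on the stratum, the null directions of the pre-symplectic form $j^*\omega$ must coincide with tangent spaces to $G_\xi$-orbits. The key auxiliary fact is that any $H$-invariant $\omega$-complement to $(T_pM)^H$ in $T_pM$ is $\omega$-orthogonal to $(T_pM)^H$, which forces compatibility between the two presentations of the reduced form and delivers the uniqueness clause.
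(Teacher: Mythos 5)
The paper itself offers no proof of this theorem --- it is quoted from Sjamaar--Lerman \cite{sja-ler:91} --- and your outline is essentially the strategy of that original paper: pass to the fixed-point submanifold $M^H$ of a representative stabilizer, show it is symplectic via an invariant compatible almost complex structure, let $L=N_G(H)/H$ act on it with the induced moment map, and realize each stratum of the singular quotient as an honest Marsden--Weinstein reduction of the locally free part. So the architecture is right. Two repairs are needed even at the level of the skeleton. First, reduction of the Hamiltonian $L$-space $M_{(H)}$ at level $\overline{\xi}$ is the quotient of the level set by $L_{\overline{\xi}}$, not by all of $L$; correspondingly a $G_\xi$-orbit in $\mu^{-1}(\xi)_i$ meets $\{q\in M:G_q=H\}$ in a single $N_{G_\xi}(H)/H=L_{\overline{\xi}}$-orbit. (Shifting to level $0$ first makes $L_{\overline{\xi}}=L$ and avoids this.) Second, the identity $\mu_H^{-1}(\overline{\xi})=\mu^{-1}(\xi)\cap M^H$ is false globally (take $H$ a maximal torus, so $\mathfrak{l}^*$ may be trivial while $M^H$ is not contained in $\mu^{-1}(\xi)$) and must be argued on the relevant connected components: for $q\in M^H$ one has $\mu(q)\in(\g^*)^H\subset\mathrm{ann}([\mathfrak{h},\g])$, and $\mu(q)-\xi\in\mathrm{ann}(\mathfrak{h})$ since $\mu^x$ is locally constant on $M^H$ for $x\in\mathfrak{h}$; if in addition $\mu(q)-\xi$ vanishes on $\mathfrak{n}=\operatorname{Lie}(N_G(H))$, then it annihilates $\mathfrak{n}+[\mathfrak{h},\g]\supset\g^{\mathfrak{h}}+[\mathfrak{h},\g]=\g$, hence is zero.

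The more serious issue is the step you yourself flag as the ``main obstacle'': that the kernel of $j^*\omega$ at $p\in\mu^{-1}(\xi)_i$ is exactly $T_p(G_\xi\cdot p)$, i.e.\ the non-free analogue of Proposition \ref{Proposition: Somewhat simple}(ii). This is the actual content of the theorem --- without it you have only transported \emph{a} symplectic form from $M_{(H)}$ to the quotient, not verified the characterizing property $j^*\omega=\pi^*(\overline{\omega}_{\xi})_i$ on the stratum, nor its uniqueness, nor that the answer is independent of the chosen representative $H$. Your closing ``key auxiliary fact'' about $\omega$-orthogonal complements of $(T_pM)^H$ gestures at the right computation but does not carry it out: one must combine $T_p(\mu^{-1}(\xi)_i)=T_p\bigl(\mu^{-1}(\xi)\cap M_{(H)}\bigr)+T_p(G_\xi\cdot p)$ with the symplectic orthogonality of $(T_pM)^H$ and its $H$-invariant complement, and with Lemma \ref{Lemma: Simple}(i), to compute the radical of $\omega_p$ on that sum. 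Until that computation is written down, the proof is a correct plan with its decisive step missing.
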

	
	A great deal more is proved in \cite{sja-ler:91}. Among other things, Sjamaar and Lerman show $$M\sll{\xi}G\coloneqq \mu^{-1}(\xi)/G_{\xi}$$ to be a \textit{stratified symplectic space} \cite[Definition 1.12]{sja-ler:91}. This amounts to the orbit-type strata in $\mu^{-1}(\xi)/G_{\xi}$ enjoying several subtle and pertinent properties, in addition to being symplectic.
	
	\section{Abelianization}\label{Section: Abelianization}
	Suppose that $G$ is a compact connected real Lie group with Lie algebra $\g$. Fix a maximal torus $T\subset G$ with Lie algebra $\mathfrak{t}\subset G$. Let $\mathfrak{t}^*_{+}\subset\mathfrak{t}^*$ be the fundamental Weyl chamber constructed in Subsection \ref{Subsection: Representation theory}. There is a bijection from integral points in $\mathfrak{t}^*_{+}$ to isomorphism classes of finite-dimensional, simple, complex $G$-modules; it sends the isomorphism class of such a module to its highest weight. As finite-dimensional complex $G$-modules are semisimple, each is determined by the collection of highest weights (with multiplicity) that occur in a decomposition into simple $G$-modules. It follows that a finite-dimensional $G$-module can be recovered from its structure as a $T$-module. A Hamiltonian counterpart of this statement is that every Hamiltonian reduction of a Hamiltonian $G$-space should be realizable as a Hamiltonian reduction of a related space by $T$. This perspective is fully realized via Guillemin, Jeffrey, and Sjamaar's notion of \textit{symplectic implosion}; see Subsection \ref{Subsection: Symplectic implosion}. It constitutes a kind of ``abelianization" of symplectic quotients by $G$. In Subsection \ref{Subsection: Gelfand--Cetlin data}, we use Gelfand--Cetlin theory to outline an alternative approach to abelianization.

	\subsection{Symplectic implosion}\label{Subsection: Symplectic implosion} Retain the objects and notation introduced at the beginning of this section. Let $(M,\omega,\mu)$ be a connected Hamiltonian $G$-space. Define an equivalence relation on $\mu^{-1}(\mathfrak{t}^*_{+})$ by $p\sim q$ if $q=g\cdot p$ for some $g\in [G_{\mu(p)},G_{\mu(p)}]$, and let $$M_{\text{impl}}\coloneqq\mu^{-1}(\mathfrak{t}^*_{+})/\hspace{-3pt}\sim$$ denote the topological space quotient. It is clear that $\mu\big\vert_{\mathfrak{t}_{+}^*}$ is invariant under this equivalence relation. In particular, $\mu\big\vert_{\mathfrak{t}_{+}^*}$ descends to a continuous map $\mu_{\text{impl}}:M_{\text{impl}}\longrightarrow\mathfrak{t}_{+}^*$.
	
	\begin{definition}
		The \textit{imploded cross-section} of $M$ is the pair $(M_{\text{impl}},\mu_{\text{impl}})$.
	\end{definition}
	
	To realize some richer structure on $(M_{\text{impl}},\mu_{\text{impl}})$, let $\Sigma$ denote the set of open faces of $\mathfrak{t}^*_{+}$. Define a partial order on $\Sigma$ by $\sigma\leq\tau$ if $\sigma\subset\overline{\tau}$. One finds that $G_{\xi}=G_{\eta}$ for all $\xi,\eta$ in a given face of $\mathfrak{t}^*_{+}$. This allows one to define $G_{\sigma}$ as the $G$-stabilizer of any point in a face $\sigma$ of $\mathfrak{t}^*_{+}$. We have the set-theoretic disjoint union
	$$M_{\text{impl}}=\bigcup_{\sigma\in\Sigma}\left(\mu^{-1}(\sigma)/[G_{\sigma},G_{\sigma}]\right).$$ For each $\sigma\in\Sigma$, the orbit-type strata of $\mu^{-1}(\sigma)/[G_{\sigma},G_{\sigma}]$ are naturally symplectic manifolds. These last two sentences turn out to underlie a more subtle fact: $M_{\text{impl}}$ is a stratified symplectic space. On the other hand, $\mu^{-1}(\mathfrak{t}_{+}^*)$ is a $T$-invariant subset of $M$. We also observe that $T$ normalizes $[G_{\sigma},G_{\sigma}]$ for all $\sigma\in\Sigma$. It follows that the $T$-action on $\mu^{-1}(\mathfrak{t}_{+}^*)$ descends to a $T$-action on $M_{\text{impl}}$. The latter action preserves the symplectic strata of $M_{\text{impl}}$, as well as the symplectic form on each stratum. Restricting $\mu_{\text{impl}}$ to each stratum yields a moment map for the $T$-action on that stratum. Given $\xi\in\mathfrak{t}_{+}^*$, this fact allows one to realize the quotient topological space $$M_{\text{impl}}\sll{\xi}T\coloneqq\mu_{\text{impl}}^{-1}(\xi)/T$$ as a stratified symplectic space. It enjoys the following relation to the stratified symplectic space $M\sll{\xi}G$; see \cite[Theorem 3.4]{gui-jef-sja:02}.
	
	\begin{theorem}\label{Theorem: Implosion}
		If $\xi\in\mathfrak{t}_{+}^*$, then the natural map $\mu^{-1}(\xi)\longrightarrow\mu_{\mathrm{impl}}^{-1}(\xi)$ descends to an isomorphism
		$$M\sll{\xi}G\overset{\cong}\longrightarrow M_{\mathrm{impl}}\sll{\xi}T$$ of stratified symplectic spaces.
	\end{theorem}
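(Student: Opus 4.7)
My plan is to first compute the topological space $M_{\mathrm{impl}}\sll{\xi}T$ explicitly, then identify it with $M\sll{\xi}G$ via an elementary group-theoretic fact, and finally upgrade this identification to an isomorphism of stratified symplectic spaces.

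To begin, I would restrict the equivalence relation defining $M_{\mathrm{impl}}$ to the level set $\mu^{-1}(\xi)$. Since $\mu(p)=\xi$ for every $p\in\mu^{-1}(\xi)$, the relation collapses to the orbit relation of $[G_\xi,G_\xi]$, so $\mu_{\mathrm{impl}}^{-1}(\xi)=\mu^{-1}(\xi)/[G_\xi,G_\xi]$. Because $T\subset G_\xi$ normalizes $[G_\xi,G_\xi]$, the residual $T$-action is well defined, and
$$M_{\mathrm{impl}}\sll{\xi}T=\mu^{-1}(\xi)\big/\bigl([G_\xi,G_\xi]\cdot T\bigr).$$

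The second step is to show that $[G_\xi,G_\xi]\cdot T=G_\xi$. Compactness and connectedness of $G$, together with $\xi\in\mathfrak{t}^*_{+}$, imply that $G_\xi$ is connected and contains $T$ as a maximal torus. Writing $G_\xi$ as the almost direct product $[G_\xi,G_\xi]\cdot Z(G_\xi)^0$ of its derived subgroup and the identity component of its center, and noting that $Z(G_\xi)^0\subset T$, one obtains the claim. Consequently, the natural map in the theorem statement induces a homeomorphism between $\mu^{-1}(\xi)/G_\xi=M\sll{\xi}G$ and $M_{\mathrm{impl}}\sll{\xi}T$.

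The final step is to verify compatibility with the stratified symplectic structures. For the strata, given $p\in\mu^{-1}(\xi)$ with $G_\xi$-stabilizer $H_p$, the $T$-stabilizer of the class $[p]\in\mu^{-1}(\xi)/[G_\xi,G_\xi]$ is $T\cap(H_p\cdot[G_\xi,G_\xi])$. Under the isomorphism $T/(T\cap[G_\xi,G_\xi])\cong G_\xi/[G_\xi,G_\xi]$ coming from the factorization in the previous step, this matches the image of $H_p$ in $G_\xi/[G_\xi,G_\xi]$, so the orbit-type decompositions on the two sides correspond. Once the strata are matched, the symplectic form on each piece is characterized on both sides by the uniqueness clause of Theorem \ref{Theorem: General pre-symplectic reduction}: both forms pull back to the restriction $j^*\omega$ along the respective quotient maps out of the corresponding smooth stratum in $\mu^{-1}(\xi)$, so they agree.

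The main obstacle, as I see it, is the stratified bookkeeping in the final step. One must verify carefully that the two-stage descent of $\omega\big\vert_{\mu^{-1}(\xi)}$ (first by $[G_\xi,G_\xi]$ to form the implosion, then by $T$) yields the same stratum-wise symplectic forms and orbit-type decomposition as the one-stage $G_\xi$-reduction of Sjamaar--Lerman. This is conceptually forced by $G_\xi=[G_\xi,G_\xi]\cdot T$ and the universal property recalled in Subsection \ref{Subsection: Pre-symplectic reduction}, but the orbit-type comparison demands some care because the relevant stabilizer conjugacy classes live in different ambient groups.
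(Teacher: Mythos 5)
The paper itself does not prove Theorem \ref{Theorem: Implosion}; it records the statement and cites \cite[Theorem 3.4]{gui-jef-sja:02}, so there is no internal proof to compare against. Your proposal reconstructs what is essentially the Guillemin--Jeffrey--Sjamaar argument, and its core is correct: the computation $\mu_{\mathrm{impl}}^{-1}(\xi)=\mu^{-1}(\xi)/[G_\xi,G_\xi]$, the identity $G_\xi=[G_\xi,G_\xi]\cdot T$ (valid because $G_\xi$ is the centralizer of a torus, hence connected with maximal torus $T$ and with $Z(G_\xi)^0\subset T$), and the resulting homeomorphism $M\sll{\xi}G\cong M_{\mathrm{impl}}\sll{\xi}T$ are exactly the heart of the matter.

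The one place where your argument is genuinely incomplete is the stratum comparison. You match strata by comparing the $T$-stabilizer $T\cap\bigl(H_p\cdot[G_\xi,G_\xi]\bigr)$ of $[p]$ with the image of $H_p=(G_\xi)_p$ in $G_\xi/[G_\xi,G_\xi]$. But non-conjugate stabilizers $H_p$ can have equal images in the abelianization $G_\xi/[G_\xi,G_\xi]$, so the $T$-orbit-type decomposition of $\mu_{\mathrm{impl}}^{-1}(\xi)$ alone is in general coarser than the $G_\xi$-orbit-type decomposition of $\mu^{-1}(\xi)$. The correct bookkeeping uses the fact that $M_{\mathrm{impl}}\sll{\xi}T$ is stratified in two stages: one first intersects $\mu_{\mathrm{impl}}^{-1}(\xi)$ with the symplectic strata of $M_{\mathrm{impl}}$ (which already record the orbit-type data of the original $G$-action) and only then takes $T$-orbit types within each such piece. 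With both levels accounted for, the strata do match those of the Sjamaar--Lerman reduction $\mu^{-1}(\xi)/G_\xi$, and your uniqueness argument for the symplectic forms (both pull back to $j^*\omega$ along the respective quotient maps) then closes the proof. So the strategy is sound, but the sentence ``the orbit-type decompositions on the two sides correspond'' needs this two-level refinement to be true as stated.
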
  
	
	A straightforward exercise reveals that $M\sll{\xi}G\cong M\sll{\mathrm{Ad}_g^*(\xi)}G$ for all $\xi\in\g^*$ and $g\in G$. Let us also recall that $\mathfrak{t}_{+}^*$ is a fundamental domain for the coadjoint action of $G$ on $\g^*$. It follows that any Hamiltonian reduction of $M$ by $G$ is isomorphic to $M\sll{\xi}G$ for a suitable $\xi\in\mathfrak{t}_{+}^*$. In this way, $M_{\text{impl}}$ allows one to systematically ``abelianize" the Hamiltonian reductions of $M$ by the potentially non-abelian group $G$.
	
	\begin{remark}[Universal imploded cross-sections]
		As discussed in Subsection \ref{Subsection: Some identities}, $T^*G$ is a Hamiltonian $G\times G$-space. Restriction to the subgroup $G=\{e\}\times G\subset G\times G$ renders $T^*G$ a Hamiltonian $G$-space. We may form the imploded cross-section $(T^*G)_{\text{impl}}$ with respect to this Hamiltonian $G$-space structure. At the same time, the action of $G=G\times\{e\}\subset G\times G$ on $T^*G$ induces an action of $G$ on $(T^*G)_{\text{impl}}$. The latter action commutes with the $T$-action on $(T^*G)_{\text{impl}}$, and preserves the symplectic strata of $(T^*G)_{\text{impl}}$. It follows that $M\times (T^*G)_{\text{impl}}$ is a stratified symplectic space with a stratum-wise Hamiltonian $G$-action. These considerations allow one to make sense of $(M\times (T^*G)_{\text{impl}})\sll{0}G$ as a stratified symplectic space with a stratum-wise Hamiltonian $T$-action. As such, $(M\times (T^*G)_{\text{impl}})\sll{0}G$ is canonically isomorphic to $M_{\text{impl}}$; see \cite[Theorem 4.9]{gui-jef-sja:02}. This fact justifies calling $(T^*G)_{\text{impl}}$ the \textit{universal imploded cross-section} for $G$.
	\end{remark}
	
	\subsection{Gelfand--Cetlin data}\label{Subsection: Gelfand--Cetlin data}
	Despite being a powerful approach to abelianizing Hamiltonian reductions, symplectic implosion features the following cost: one must replace a Hamiltonian $G$-space $M$ with its imploded cross-section $M_{\text{impl}}$, a potentially singular $T$-space. It is natural to wonder if $M$ carries a Hamiltonian action of a higher-rank torus $\mathbb{T}_G$, in such a way that Hamiltonian reductions of $M$ by $G$ are canonically isomorphic to Hamiltonian reductions of $M$ by $\mathbb{T}_G$. We now outline an approach to abelianization in this spirit of this question; it features the Guillemin and Sternberg's \textit{Gelfand--Cetlin systems} \cite{gui-ste:83}, as well as Hoffman and Lane's recent generalization thereof \cite{hof-lan:23}.     
	
	We first introduce some Lie-theoretic data. Consider the quantities $$\mathrm{rk}\hspace{2pt}G\coloneqq\rank\hspace{2pt}G,\quad\mathrm{u}_{G}\coloneqq\frac{1}{2}(\dim G-\mathrm{rk}\hspace{2pt}G),\quad\text{and}\quad\mathrm{b}_{G}\coloneqq\frac{1}{2}(\dim G+\mathrm{rk}\hspace{2pt}G),$$ as well as the three tori
	$$T_G\coloneqq\mathrm{U}(1)^{\mathrm{rk}\hspace{2pt}G},\quad \mathcal{T}_G\coloneqq\mathrm{U}(1)^{\mathrm{u}_G},\quad\text{and}\quad\mathbb{T}_G\coloneqq T_G\times\mathcal{T}_G\cong\mathrm{U}(1)^{\mathrm{b}_{G}}.$$ On the other hand, use multiplication by $-i$ to identify $\mathfrak{u}(1)=i\mathbb{R}$ with $\mathbb{R}$. The Lie algebras of $T_G$, $\mathcal{T}_G$, and $\mathbb{T}_G$ are thereby $\mathbb{R}^{\mathrm{rk}\hspace{2pt}G}$, $\mathbb{R}^{\mathrm{u}_G}$, and $\mathbb{R}^{\mathrm{b}_G}$, respectively. Let us also consider the locus $$\g^*_{\text{reg}}\coloneqq\{\xi\in\g^*:\dim\g_{\xi}=\mathrm{rk}\hspace{2pt}G\}.$$
	
	\begin{definition}
		A \textit{Gelfand--Cetlin datum} for $G$ is a pair $(\lambda,\mathcal{U})$ of a continuous map $\lambda=(\lambda_1,\ldots,\lambda_{\mathrm{b}_G}):\g^*\longrightarrow\mathbb{R}^{\mathrm{b}_G}$ and open dense subset $\mathcal{U}\subset\g^*$ that satisfy the following properties:
		\begin{itemize}
			\item[\textup{(i)}] $\lambda_1,\ldots,\lambda_{\mathrm{rk}_G}$ are $G$-invariant on $\g^*$ and smooth on $\g^*_{\text{reg}}$;
			\item[\textup{(ii)}] for all $\xi\in\g^*_{\text{reg}}$, $\{(\mathrm{d}\lambda_1)_{\xi},\ldots,(\mathrm{d}\lambda_{\mathrm{rk}_G})_{\xi}\}$ is a $\mathbb{Z}$-basis of the lattice $\frac{1}{2\pi}\ker(\exp\big\vert_{\g_{\xi}}:\g_{\xi}\longrightarrow G_{\xi})$;
			\item[\textup{(iii)}] $\mathcal{U}\subset\g^*_{\text{reg}}$;
			\item[\textup{(iv)}] $\lambda\big\vert_{\mathcal{U}}:\mathcal{U}\longrightarrow\mathbb{R}^{\mathrm{b}_G}$ is a smooth submersion and moment map for a Poisson Hamiltonian action of $\mathbb{T}_G$ on $\mathcal{U}$;
			\item[\textup{(v)}] $\lambda\big\vert_{\mathcal{U}}:\mathcal{U}\longrightarrow\lambda(\mathcal{U})$ is a principal bundle for $\mathcal{T}_G$;
			\item[\textup{(vi)}] if $M$ is a symplectic Hamiltonian $G$-space with moment map $\mu:M\longrightarrow\g^*$, then $\lambda\circ(\mu\big\vert_{\mu^{-1}(\mathcal{U})}):\mu^{-1}(\mathcal{U})\longrightarrow\mathbb{R}^{\mathrm{b}_G}$ is a moment map for a Hamiltonian action of $\mathbb{T}_G$-action on $\mu^{-1}(\mathcal{U})$.
		\end{itemize}
	\end{definition}
	
	Let $(\lambda,\mathcal{U})$ be a Gelfand--Cetlin datum for $G$. Given a Hamiltonian $G$-space $M$ with moment map $\mu:M\longrightarrow\g^*$, $\mu^{-1}(\mathcal{U})$ is a Hamiltonian $\mathbb{T}_{G}$-space with moment map $\lambda\circ(\mu\big\vert_{\mu^{-1}(\mathcal{U})}):\mu^{-1}(\mathcal{U})\longrightarrow\mathbb{R}^{\mathrm{b}_G}$. It is therefore reasonable to compare Hamiltonian reductions of $\mu^{-1}(\mathcal{U})$ by $\mathbb{T}_G$ to those of $M$ by $G$; each is a stratified symplectic space. This is the impetus for the following result; see \cite{cro-wei:24} for more details.
	
	\begin{theorem}[C.--Weitsman]
		Let $(\lambda,\mathcal{U})$ be a Gelfand--Cetlin datum for $G$. If $M$ is a Hamiltonian $G$-space and $\xi\in\mathcal{U}$, then the inclusion $\mu^{-1}(\xi)\longrightarrow(\lambda\circ\mu)^{-1}(\lambda(\xi))$ descends to an isomorphism
		$$M\sll{\xi}G\cong\mu^{-1}(\mathcal{U})\sll{\lambda(\xi)}\mathbb{T}_G$$ of stratified symplectic spaces.
	\end{theorem}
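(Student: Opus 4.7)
The plan is to produce the desired isomorphism as the descent of the inclusion $\mu^{-1}(\xi)\hookrightarrow(\lambda\circ\mu)^{-1}(\lambda(\xi))$ to the quotients, and then to verify the bijectivity and the stratum-wise symplectic compatibility separately. The first step is to give a clean description of the right-hand side. Since $\lambda_1,\ldots,\lambda_{\mathrm{rk}\,G}$ are $G$-invariant by (i), they are Casimirs of the Lie--Poisson structure on $\g^*$; their Hamiltonian vector fields vanish, so the $T_G$-factor of $\mathbb{T}_G$ acts trivially on $\mathcal{U}$. Combined with (v), this shows that $\mathcal{T}_G$ acts freely on $\mathcal{U}$ and that $\lambda^{-1}(\lambda(\xi))\cap\mathcal{U}=\mathcal{T}_G\cdot\xi$. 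Hence $(\lambda\circ\mu)^{-1}(\lambda(\xi))\cap\mu^{-1}(\mathcal{U})=\mu^{-1}(\mathcal{T}_G\cdot\xi)$, and the right-hand side of the claimed isomorphism is $\mu^{-1}(\mathcal{T}_G\cdot\xi)/\mathbb{T}_G$ in light of the abelianness of $\mathbb{T}_G$.

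Next, one identifies the $G_\xi$-action on $\mu^{-1}(\xi)$ with a restriction of the $T_G$-action on $\mu^{-1}(\mathcal{U})$. Since $\xi\in\g^*_{\mathrm{reg}}$, the Lie algebra $\g_\xi$ is a Cartan subalgebra and $G_\xi$ is a maximal torus of $G$. Condition (ii) provides a $\mathbb{Z}$-basis $\{(\mathrm{d}\lambda_i)_\xi\}$ of the integral lattice of $G_\xi$, and by (vi) the Hamiltonian flow of $\lambda_i\circ\mu$ on $M$ integrates the corresponding $1$-parameter subgroup of $G_\xi$; this yields a canonical isomorphism $T_G\cong G_\xi$ under which the restricted $T_G$-action on $\mu^{-1}(\xi)$ agrees with the $G_\xi$-action. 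Since $T_G$ preserves $\mu$ (its moment-map components are Casimirs), the inclusion $\mu^{-1}(\xi)\hookrightarrow\mu^{-1}(\mathcal{T}_G\cdot\xi)$ composed with the $\mathbb{T}_G$-quotient is $G_\xi$-invariant, and therefore descends to a continuous map
$$\Phi\colon M\sll{\xi}G\longrightarrow\mu^{-1}(\mathcal{U})\sll{\lambda(\xi)}\mathbb{T}_G.$$

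Bijectivity of $\Phi$ falls out of the freeness of $\mathcal{T}_G$ on $\mathcal{U}$. For injectivity, if $(s,\tau)\cdot p=q$ with $p,q\in\mu^{-1}(\xi)$ and $(s,\tau)\in T_G\times\mathcal{T}_G$, then applying $\mu$ gives $\tau\cdot\xi=\xi$, so $\tau=e$ by freeness, and $s\in T_G\cong G_\xi$ exhibits $[p]=[q]$ upstairs. For surjectivity, any $p$ with $\mu(p)\in\mathcal{T}_G\cdot\xi$ may be shifted by a unique $\tau\in\mathcal{T}_G$ so that $\tau^{-1}\cdot p\in\mu^{-1}(\xi)$. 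The same freeness argument also ensures the $\mathbb{T}_G$-stabilizer at every $p\in\mu^{-1}(\mathcal{T}_G\cdot\xi)$ lies in $T_G$, so that $\Phi$ identifies orbit-type strata on both sides.

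It remains to check that $\Phi$ is stratum-wise symplectic, and this is where the most care is needed. On each orbit-type stratum, the reduced symplectic form is uniquely characterized by the Sjamaar--Lerman identity $j^*\omega=\pi^*\overline{\omega}$ of Section~\ref{Subsection: A generalization}. Pulling back the stratum-wise symplectic form on $\mu^{-1}(\mathcal{U})\sll{\lambda(\xi)}\mathbb{T}_G$ along $\Phi$ and restricting to $\mu^{-1}(\xi)$ gives $\omega\vert_{\mu^{-1}(\xi)}$, by naturality of the defining identity with respect to the inclusion $\mu^{-1}(\xi)\hookrightarrow\mu^{-1}(\mathcal{T}_G\cdot\xi)$ and compatibility of the quotient maps. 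The uniqueness clause of the Sjamaar--Lerman theorem then forces $\Phi$ to intertwine the two symplectic forms on each stratum. The main obstacle I anticipate is the faithful bookkeeping required in this last step: one must verify that the pre-symplectic data used to define the reduced form on $\mu^{-1}(\mathcal{T}_G\cdot\xi)\sll{\lambda(\xi)}\mathbb{T}_G$, which involves the larger $\mathbb{T}_G$-orbit directions, restricts cleanly along $\mu^{-1}(\xi)$ to the pre-symplectic data used to define $M\sll{\xi}G$, and this ultimately rests on the Casimir/Lagrangian fibration structure of $(\lambda,\mathcal{U})$ spelled out in conditions (i)--(v).
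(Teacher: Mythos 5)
The paper states this theorem without proof, deferring to \cite{cro-wei:24}, and your argument follows essentially the route taken there: identify $\lambda^{-1}(\lambda(\xi))\cap\mathcal{U}$ with the free $\mathcal{T}_G$-orbit of $\xi$ using the Casimir property of $\lambda_1,\dots,\lambda_{\mathrm{rk}\,G}$ and condition (v), use condition (ii) to match the $T_G$-action on $\mu^{-1}(\xi)$ with the $G_\xi$-action, and transfer the reduced forms stratum-by-stratum via the uniqueness clause of the Sjamaar--Lerman construction. The sketch is correct; the only points left implicit are the continuity of the inverse of $\Phi$ (obtainable from local sections of the principal $\mathcal{T}_G$-bundle $\lambda\vert_{\mathcal{U}}$) and the matching of \emph{connected components} of orbit-type strata, which follows from the $\mathcal{T}_G$-equivariant identification $\mu^{-1}(\mathcal{T}_G\cdot\xi)\cong\mathcal{T}_G\times\mu^{-1}(\xi)$ together with the connectedness of $\mathcal{T}_G$.
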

	
	\begin{example}[The classical Gelfand--Cetlin system]
		Suppose that $G=\mathrm{U}(n)$ is the group of $n\times n$ unitary matrices. Its Lie algebra is the $\mathbb{R}$-vector space $\mathfrak{u}(n)$ of $n\times n$ skew-Hermitian matrices. Identify the $\mathrm{U}(n)$-modules $\mathfrak{u}(n)^*$ and $\mathcal{H}(n)$ in the manner of Example \ref{Example: A concrete example}. In this case, we use the slightly different pairing $$\mathfrak{u}(n)\otimes_{\mathbb{R}}\mathcal{H}(n)\longrightarrow\mathbb{R},\quad x\otimes y\mapsto-{i}\mathrm{tr}(xy)$$ to induce the identification.
		
		Suppose that $\xi\in\mathfrak{u}(n)^*=\mathcal{H}(n)$. Given $i\in\{0,\ldots,n-1\}$, the $(n-i)\times (n-i)$ submatrix in the bottom-right corner of $\xi$ has real eigenvalues $\lambda_{i1}(\xi)\geq\cdots\geq\lambda_{i(n-i)}(\xi)$. This observation defines $\mathrm{b}_{\mathrm{U}(n)}=\frac{n(n+1)}{2}$ continuous functions $\lambda_{ij}:\mathfrak{u}(n)^*\longrightarrow\mathbb{R}$, where $i\in\{0,\ldots,n-1\}$ and $j\in\{1,\ldots,n-i\}$. Let us define $\lambda:\mathfrak{u}(n)^*\longrightarrow\mathbb{R}^{\mathrm{b}_G}$ and $\mathcal{U}\subset\mathfrak{u}(n)^*$ by  $$\lambda\coloneqq(\lambda_{01},\ldots,\lambda_{0n},\lambda_{11},\ldots,\lambda_{1(n-1)},\ldots,\lambda_{(n-1)1})$$ and $$\mathcal{U}\coloneqq\{\xi\in\mathfrak{u}(n)^*:\lambda_{ij}(\xi)>\lambda_{(i+1)j}(\xi)>\lambda_{i(j+1)}(\xi)\text{ for all }i\in\{0,\ldots,n-2\}\text{ and }j\in\{1,\ldots,n-i-1\}\},$$ respectively. The pair $(\lambda,\mathcal{U})$ is a Gelfand--Cetlin datum for $\mathrm{U}(n)$; see \cite[Subsection 3.5]{cro-wei:24}.
		
		This example warrants a few comments. Let $\mathcal{O}\subset\mathfrak{u}(n)^*$ be a coadjoint orbit of $\operatorname{U}(n)$ satisfying $\mathcal{O}\subset\mathfrak{u}(n)^*_{\text{reg}}$. It follows that $\dim\mathcal{O}=n(n-1)$. On the other hand, there are $\frac{n(n-1)}{2}$ functions of the form $\lambda_{ij}$ with $i\in\{1,\ldots,n-1\}$ and $j\in\{1,\ldots,n-i\}$. The restrictions of these functions to $\mathcal{O}\cap\mathcal{U}$ constitute a completely integrable system with respect to the symplectic structure on the open subset of $\mathcal{O}\cap\mathcal{U}\subset\mathcal{O}$. Guillemin and Sternberg call this system the \textit{Gelfand--Cetlin system} on $\mathcal{O}$ \cite{gui-ste:83}. Among other things, it affords a geometric quantization of $\mathcal{O}$ in a real polarization \cite{gui-ste:83}.
	\end{example}
	
	\begin{example}[General examples of Gelfand--Cetlin data]
		It turns out that every compact connected Lie group admits Gelfand--Cetlin data. This fact follows from the work of Hoffman--Lane \cite{hof-lan:23}; see \cite[Section 3.2]{cro-wei:24} for further details. Hoffman and Lane use \textit{toric degenerations} as a technical tool, thereby building on results of Harada--Kaveh \cite{har-kav:15}.
	\end{example}
	
	\begin{example}[Further applications of Gelfand--Cetlin data]
		In \cite{cro-wei:232}, Weitsman and the first named author construct the \textit{double Gelfand--Cetlin system} on $T^*\mathrm{U}(n)$. The resulting real polarization yields a geometric quantization of $T^*\mathrm{U}(n)$. Hoffman and Lane extent these ideas to arbitrary Lie type \cite[Example 6.14]{hof-lan:23}. On the other hand, Weitsman and the first author use Gelfand--Cetlin data to prove a non-abelian version of the Duistermaat--Heckman theorem \cite{cro-wei:23}.
	\end{example}
	
	\section{Generalized Hamiltonian reduction}
	This section surveys a series of manuscripts written by Mayrand and the first named author \cite{cro-may:22,cro-may:241,cro-may:24}. Subsection \ref{Subsection: Symplectic groupoids and their actions} begins with a brief overview of symplectic groupoids and their Hamiltonian actions. In Subsection \ref{Subsection: Pre-Poisson reduction}, we define a \textit{generalized Hamiltonian reduction} procedure for Hamiltonian actions of symplectic groupoids; it encompasses many of the procedures discussed earlier in this manuscript. Generalized Hamiltonian reduction also has implications for the Moore--Tachikawa conjecture \cite{moo-tac:11}, as explained in Subsection \ref{Subsection: The Moore--Tachikawa conjecture}. In the interest of further elucidating this conjecture, Subsection \ref{Subsection: A scheme-theoretic version} outlines a scheme-theoretic version of generalized Hamiltonian reduction. The importance of scheme-theoretic reduction to the Moore--Tachikawa conjecture is outlined in Subsection \ref{Subsection: MT via SG}. This is explained in the context of a broader theme: abelian symplectic groupoids and Morita equivalences yield topological quantum field theories.  
	
	\subsection{Symplectic groupoids and their actions}\label{Subsection: Symplectic groupoids and their actions}
	Consider a groupoid object $\G\tto X$ in the category of smooth manifolds (resp. complex manifolds, complex algebraic varieties). We call $\G\tto X$ a real Lie groupoid (resp. complex Lie groupoid, algebraic Lie groupoid) if its source and target morphisms are smooth submersions (resp. holomorphic submersions, smooth morphisms of varieties). One calls $\G\tto X$ a \textit{real symplectic groupoid} (resp. 
	\textit{complex symplectic groupoid}, \textit{algebraic symplectic groupoid}) if $\G$ is equipped with a real symplectic (resp. holomorphic symplectic, algebraic symplectic) form for which the graph of groupoid multiplication is Lagrangian in $\G\times\G\times\G^{\text{op}}$. In what follows, we use the term \textit{symplectic groupoid} to encompass these three cases. The correct specializations to the smooth, holomorphic, and algebraic categories will be clear from context. 
	
	\begin{example}[Cotangent groupoids]\label{Example: Cotangent groupoids}
		Let $G$ be a Lie group with Lie algebra $\g$. Identify $T^*G$ with $G\times\g^*$ by left-trivializing the former. Define $\sss:T^*G\longrightarrow\mathfrak{g}^*$ and $\ttt:T^*G\longrightarrow\mathfrak{g}^*$ by $\sss(g,\xi)=\mathrm{Ad}_g^*(\xi)$ and $\ttt(g,\xi)=\xi$, respectively. It turns out that $\sss$ and $\ttt$ are the source and target morphisms of a symplectic groupoid structure on $T^*G\tto\g^*$, where $T^*G$ carries the symplectic form in Example \ref{Example: Cotangent bundles}. Groupoid multiplication is given by $(g,\xi)\cdot (h,\eta)\coloneqq (gh,\eta)$ if $\xi=\mathrm{Ad}_h^*(\eta)$.
	\end{example}
	
	\begin{example}[The universal centralizer]\label{Example: Universal centralizer}
		Let $G$ be a connected complex semisimple affine algebraic group with Lie algebra $\g$. Use the Killing form and left trivialization to identify $T^*G$ with $G\times\g$ and $\g^*$ with $\g$. In light of Example \ref{Example: Cotangent groupoids}, the source and target of the cotangent groupoid $T^*G\tto\g$ are $\sss(g,x)=\mathrm{Ad}_g(x)$ and $\ttt(g,x)=x$, respectively. Note that
		$$\mathcal{I}\coloneqq\{(g,x)\in T^*G:\mathrm{Ad}_g(x)=x\}$$ is a subgroupoid of $T^*G$, on which the source and target morphisms coincide. It is thereby a group scheme $\mathcal{I}\longrightarrow\g$ with fiber $G_x$ over $x\in\g$. 
		
		Consider the regular locus $$\greg\coloneqq\{x\in\g:\dim\g_x=\mathrm{rk}\hspace{2pt}\g\}.$$ At the same time, call $(e,h,f)\in\g^{\times 3}$ an \textit{$\mathfrak{sl}_2$-triple} if $[e,f]=h$, $[h,e]=2e$, and $[h,f]=-2f$. An $\mathfrak{sl}_2$-triple $(e,h,f)\in\mathfrak{g}^{\times 3}$ is called \textit{principal} if $e,h,f\in\greg$. Such triples exist, and any two are conjugate \cite{kos:59}. Given a principal $\mathfrak{sl}_2$-triple $(e,h,f)\in\mathfrak{g}^{\times 3}$, consider the $\mathrm{rk}\g\hspace{2pt}$-dimensional affine subspace $\mathcal{S}\coloneqq e+\g_f\subset\g$. It turns out that $\mathcal{S}\subset\greg$, and that $\mathcal{S}$ is a fundamental domain for the adjoint action of $G$ on $\greg$ \cite{kos:63}. 
		
		The pullback
		$$\begin{tikzcd}
			\mathcal{J}\arrow[r] \arrow[d] & \mathcal{I} \arrow[d] \\
			\mathcal{S} \arrow[r] & \g
		\end{tikzcd}$$
		of $\mathcal{I}\longrightarrow\g$ along the inclusion $\mathcal{S}\subset\g$ is called the \textit{universal centralizer} of $G$. One knows that $\mathcal{J}$ is a symplectic subvariety of $T^*G$ \cite{cro-roe:22}. This implies that the group scheme $\mathcal{J}\longrightarrow\mathcal{S}$ is a symplectic groupoid. Since $G_x$ is abelian for all $x\in\greg$ \cite{kos:59}, $\mathcal{J}$ is an abelian symplectic groupoid in the sense of the forthcoming Definition \ref{Definition: Groupoid definition}(i). 
	\end{example}
	
	Let $\G\tto X$ be a symplectic groupoid. Suppose that $\G\tto X$ acts on a symplectic manifold or variety $M$ via a map $\mu:M\longrightarrow X$, called the \textit{moment map}. Following \cite{mik-wei:88}, $M$ is called a \textit{Hamiltonian $\G$-space} if the graph of the $\G$-action is Lagrangian in $\G\times M\times M^{\text{op}}$. In the case that $\G\tto X$ is $T^*G\tto\g^*$ for a Lie group $G$, there is an explicit correspondence between Hamiltonian $G$-spaces and Hamiltonian $T^*G$-spaces \cite{mik-wei:88}. Hamiltonian spaces for symplectic groupoids thereby generalize those for Lie groups.
	
	\subsection{Hamiltonian reduction along pre-Poisson submanifolds and subvarieties}\label{Subsection: Pre-Poisson reduction} Let $\G\tto X$ be a symplectic groupoid. The Lie algebroid of $\G\tto X$ is canonically isomorphic to $T^*X$. One finds that the anchor map is $\sigma^{\vee}:T^*X\longrightarrow TX$ for a Poisson bivector field $\sigma$ on $X$. A submanifold or smooth subvariety $Y\subset X$ is called \textit{pre-Poisson} if $$L_Y\coloneqq (\sigma^{\vee})^{-1}(TY)\cap\mathrm{ann}_{T^*X}(TY)\longrightarrow Y$$ has constant fiber dimension \cite{cat-zam:07,cat-zam:09}. In this case, $L_Y$ is a Lie subalgebroid of $T^*X$. A Lie subgroupoid $\H\tto Y$ of $\G\tto X$ is called a \textit{stabilizer subgroupoid} if $\mathcal{H}$ is isotropic in $\G$ and has $L_Y$ as its Lie algebroid. If $M$ is a Hamiltonian $\G$-space with moment map $\mu:M\longrightarrow X$, then $\mathcal{H}\tto Y$ acts on $\mu^{-1}(Y)$. This observation gives context for the following main result of \cite{cro-may:22}. In the interest of parsimony, we state this result in the smooth and holomorphic categories only; the algebro-geometric version is detailed in \cite[Section 5]{cro-may:22}.
	
	\begin{theorem}[C.--Mayrand]\label{Theorem: Generalized reduction}
		Suppose that $\mathcal{H}\tto Y$ is a stabilizer subgroupoid of $\G\tto X$ over a smooth (resp. complex) pre-Poisson submanifold $Y$ of $X$. Let $(M,\omega)$ be a Hamiltonian $\G$-space with moment map $\mu:M\longrightarrow X$. Consider the inclusion map $j:\mu^{-1}(Y)\longrightarrow M$ and quotient map $\pi:\mu^{-1}(Y)\longrightarrow\mu^{-1}(Y)/\mathcal{H}$. If $\mathcal{H}$ acts freely and properly on $\mu^{-1}(Y)$, then $\mu^{-1}(Y)$ is a smooth (resp. complex) submanifold of $M$, and there exists a unique smooth (resp. complex) manifold structure on $\mu^{-1}(Y)/\mathcal{H}$ for which $\pi$ is a smooth (resp. holomorphic) submersion. In this case, there is a unique smooth (resp. holomorphic) symplectic form $\overline{\omega}$ on $\mu^{-1}(Y)/\mathcal{H}$ satisfying $\pi^*\overline{\omega}=j^*\omega$.
	\end{theorem}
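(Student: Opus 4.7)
The plan is to reduce Theorem~\ref{Theorem: Generalized reduction} to the pre-symplectic Hamiltonian reduction of Theorem~\ref{Theorem: General pre-symplectic reduction}. Concretely, I would show that $\mu^{-1}(Y)$ is a pre-symplectic submanifold of $M$ whose characteristic distribution $\mathcal{D}_{\mu^{-1}(Y)}$ coincides with the tangent distribution to the $\H$-orbits. Granted this, freeness and properness of the $\H$-action yield a smooth (resp. complex) quotient $\pi:\mu^{-1}(Y)\to\mu^{-1}(Y)/\H$ realizing $\mu^{-1}(Y)/\mathcal{D}_{\mu^{-1}(Y)}$ as a manifold, so Theorem~\ref{Theorem: General pre-symplectic reduction} supplies the unique symplectic form $\overline{\omega}$ with $\pi^*\overline{\omega}=j^*\omega$.

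The key compatibility to exploit is the groupoid analogue of \eqref{Equation: Useful}. The infinitesimal action of the Lie algebroid $T^*X$ of $\G$ on $M$ is a bundle map $\aaa:\mu^*T^*X\to TM$, and the Hamiltonian condition forces
\[
\omega_p(\aaa_p(\alpha),v)=\alpha(\mathrm{d}\mu_p(v)) \quad \text{for all } \alpha\in T^*_{\mu(p)}X \text{ and } v\in T_pM.
\]
Specialising this identity recovers a groupoid version of Lemma~\ref{Lemma: Simple}: $\ker(\mathrm{d}\mu_p)=(T_p(\G\cdot p))^{\omega_p}$, while $\im(\mathrm{d}\mu_p)$ sits inside the symplectic leaf through $\mu(p)$. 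This identity is what translates the pre-Poisson condition on $Y$ into a clean-intersection condition for $\mu$.

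The heart of the argument — and the step I expect to be the main obstacle — is the simultaneous verification that (a) $\mu^{-1}(Y)$ is a submanifold, and (b) $T_p\mu^{-1}(Y)\cap(T_p\mu^{-1}(Y))^{\omega_p}=T_p(\H\cdot p)$ for all $p\in\mu^{-1}(Y)$. For (a), the displayed identity shows that transversality of $\mu$ to $Y$ at $p$ reduces to the injectivity of $\aaa_p$ on $(L_Y)_{\mu(p)}$; freeness of the $\H$-action on $\mu^{-1}(Y)$ delivers precisely this injectivity, and constancy of the rank of $L_Y$ upgrades pointwise transversality to the submanifold conclusion. For (b), a vector $v\in T_p\mu^{-1}(Y)$ lies in the symplectic orthogonal inside $M$ iff $v=\aaa_p(\alpha)$ for some $\alpha$ vanishing on $\mathrm{d}\mu_p(T_p\mu^{-1}(Y))$; tracing through annihilators and the anchor map $\sigma^{\vee}$, the set of admissible $\alpha$ is precisely the fibre $(L_Y)_{\mu(p)}=\bigl((\sigma^{\vee})^{-1}(TY)\cap\operatorname{ann}(TY)\bigr)_{\mu(p)}$, whose image under $\aaa_p$ is $T_p(\H\cdot p)$ because $\H$ integrates $L_Y$.

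With (a) and (b) in hand, the constancy of the rank of $L_Y$ translates into $\mu^{-1}(Y)$ being pre-symplectic and the $\H$-orbit distribution being its characteristic distribution $\mathcal{D}_{\mu^{-1}(Y)}$. Freeness and properness of the $\H$-action on $\mu^{-1}(Y)$ then give a smooth (resp. holomorphic) quotient manifold structure on $\mu^{-1}(Y)/\H$ that coincides with the leaf space $\mu^{-1}(Y)/\mathcal{D}_{\mu^{-1}(Y)}$, and Theorem~\ref{Theorem: General pre-symplectic reduction} concludes the proof by furnishing and uniquely determining $\overline{\omega}$.
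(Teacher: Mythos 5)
The paper does not actually prove Theorem \ref{Theorem: Generalized reduction}; it is quoted from \cite{cro-may:22} without argument, so there is no in-text proof to compare against. That said, your outline is the natural one and matches both the strategy of the cited work and the way this survey handles the special case $\G=T^*G$, $Y=\{\xi\}$ (Lemma \ref{Lemma: Simple}, Proposition \ref{Proposition: Somewhat simple}, then Theorem \ref{Theorem: General pre-symplectic reduction}): your items (a) and (b) are precisely the groupoid analogues of Proposition \ref{Proposition: Somewhat simple}(i) and (ii). Two places deserve more care than you give them. First, transversality of $\mu$ to $Y$ at $p$ is literally the vanishing of $\ker(\aaa_p)\cap\operatorname{ann}(T_{\mu(p)}Y)$, whereas local freeness of $\H$ only gives injectivity of $\aaa_p$ on the smaller subspace $(L_Y)_{\mu(p)}\subset\operatorname{ann}(T_{\mu(p)}Y)$; to close this gap you need the intertwining identity $\mathrm{d}\mu_p\circ\aaa_p=\sigma^{\vee}_{\mu(p)}$, which forces $\ker(\aaa_p)\subset\ker(\sigma^{\vee}_{\mu(p)})\subset(\sigma^{\vee})^{-1}(TY)$ and hence $\ker(\aaa_p)\cap\operatorname{ann}(TY)\subset (L_Y)_{\mu(p)}$. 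You gesture at this identity but should invoke it explicitly, since it is exactly what makes the pre-Poisson condition interact correctly with the moment map. Second, Theorem \ref{Theorem: General pre-symplectic reduction} produces a form on the leaf space $\mu^{-1}(Y)/\mathcal{D}_{\mu^{-1}(Y)}$, while the theorem concerns $\mu^{-1}(Y)/\H$; these coincide only when the $\H$-orbits are exactly the leaves (e.g.\ when $\H$ is source-connected). For a general isotropic stabilizer subgroupoid you must additionally check that $j^*\omega$ is basic for the full $\H$-action, i.e.\ invariant under the residual discrete part of each orbit, a point the paper itself alludes to when it notes that all source-connected choices of $\H$ give the same reduced space. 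With those two repairs the argument is complete.
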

	
	In the algebraic case, one needs the notion of an \textit{algebraic quotient} $\pi:\mu^{-1}(Y)\longrightarrow\mu^{-1}(Y)/\mathcal{H}$ \cite[Definition 5.1]{cro-may:22}. We write $M\sll{Y,\H}\G$ for the symplectic manifold or variety resulting from Theorem \ref{Theorem: Generalized reduction}, and $M\sll{Y,\H,\pi}\G$ for its algebraic counterpart. Source-connected stabilizer subgroupoids $\H\tto Y$ exist in the smooth and holomorphic categories, and all yield the same symplectic manifold $M\sll{Y,\H}\G$. We write $M\sll{Y}\G$ for this symplectic manifold. In the case that $\G$ is the cotangent groupoid of a Lie group $G$, we set $M\sll{Y,\H}G\coloneqq M\sll{Y,\H}(T^*G)$ and $M\sll{Y}G\coloneqq M\sll{Y}(T^*G)$.
	
	\begin{example}[Hamiltonian reduction by a Lie group]
		Suppose that $\G\tto X$ is the cotangent groupoid $T^*G\tto\mathfrak{g}^*$ of a Lie group $G$. If $\xi\in\g^*$, then Example \ref{Example: Cotangent groupoids} implies that $G_{\xi}\longrightarrow\{\xi\}$ is a stabilizer subgroupoid for the pre-Poisson submanifold $\{\xi\}\subset\g^*$. Note that $G_{\xi}$ acts freely and properly on $\mu^{-1}(\xi)$ if and only if this subgroupoid acts freely and properly on $\mu^{-1}(\xi)$. In this case, $M\sll{\{\xi\}}(T^*G)=M\sll{\{\xi\}}G=M\sll{\xi}G$. Theorem \ref{Theorem: Generalized reduction} thereby generalizes the Marsden--Weinstein approach \cite{mar-wei:74} to Hamiltonian reduction.
	\end{example}
	
	\begin{example}[Universal reduced spaces for Hamiltonian Lie group actions]\label{Example: Universal reduced spaces for Hamiltonian actions}
		Let $G$ be a Lie group with Lie algebra $\g$. As explained in Subsection \ref{Subsection: Some identities}, $T^*G$ is a Hamiltonian $G\times G$-space. Restrict the action to one of the subgroup $G=\{e\}\times G\subset G\times G$. In this way, $T^*G$ is a Hamiltonian $T^*G$-space. Suppose that a pre-Poisson submanifold $Y\subset\g^*$ and stabilizer subgroupoid $\H\tto Y$ satisfy the hypotheses of Theorem \ref{Theorem: Generalized reduction} for this $T^*G$-action. Form the symplectic manifold $$\mathfrak{M}_{G,Y,\H}\coloneqq T^*G\sll{Y,\H}G.$$ The action of $G=G\times\{e\}\subset G\times G$ on $T^*G$ induces a Hamiltonian $G$-space structure on $\mathfrak{M}_{G,\mathcal{H},Y}$. On the other hand, let $M$ be a Hamiltonian $G$-space. Suppose that $M$, $Y$, and $\H$ also satisfy the hypotheses of Theorem \ref{Theorem: Generalized reduction}. We then have a canonical symplectomorphism $$M\sll{Y,\H}G\cong (M\times\mathfrak{M}_{G,Y,\H})\sll{0}G,$$
		where $G$ acts diagonally on $M\times \mathfrak{M}_{G,Y,\H}$ \cite[Theorem D(i)]{cro-may:22}. This fact justifies calling $\mathfrak{M}_{G,Y,\H}$ the \textit{universal reduced space} for Hamiltonian reduction by $G$ along $Y$ with respect to $\H$. If $Y=\{0\}$ and $\H$ is $G\longrightarrow\{0\}$, then $\mathfrak{M}_{G,Y,\H}=T^*G$ as Hamiltonian $G$-spaces; here $G=G\times\{e\}\subset G\times G$ acts on $T^*G$ as above.
	\end{example}
	
	\begin{example}[Symplectic implosion]
		Fix the Lie-theoretic data introduced in the first paragraph of Section \ref{Section: Abelianization}. The fundamental Weyl chamber $\mathfrak{t}^*_{+}$ is stratified into pre-Poisson submanifolds of $\g^*$. Given a Hamiltonian $G$-space $M$, this fact allows one to define and realize $M\sll{\mathfrak{t}^*_{+}}G$ as a stratified symplectic space with a stratum-wise Hamiltonian $T$-action. As such, it turns out to coincide with $M_{\text{impl}}$ \cite[Theorem F(ii)]{cro-may:22}. 
	\end{example}
	
	\begin{example}[Pre-images of Poisson transversals under moment maps]\label{Example: Pre-images} Consider a symplectic groupoid $\G\tto X$ and Hamiltonian $\G$-space $\mu:M\longrightarrow X$. A submanifold $S\subset X$ is called a \textit{Poisson transversal} if it has the following property for each symplectic leaf $L\subset X$: $S$ is transverse to $L$ in $X$, and $S\cap L$ is a symplectic submanifold of $L$. In this case, $\mu^{-1}(S)\subset M$ is a symplectic submanifold \cite[Lemma 1(ii)]{fre-mar:17}. On the other hand, $S$ is necessarily a pre-Poisson submanifold of $X$. One finds the trivial groupoid $\H\tto S$ to be a stabilizer subgroupoid for $S$ in $\G$. We have $M\sll{S,\H}\G=\mu^{-1}(S)$ as symplectic manifolds.
		
		The following is an important special case of the above. Let $G$ be a complex semisimple affine algebraic group with Lie algebra $\g$. Suppose that $(e,h,f)\in\g^{\times 3}$ is an $\mathfrak{sl}_2$-triple. The \textit{Slodowy slice} associated to this triple is defined to be the affine subspace $\mathcal{S}\coloneqq e+\g_{f}\subset\g$; it is a Poisson transversal \cite{gan-gin:02}. At the same time, use the left trivialization and Killing form to identify $T^*G$ with $G\times\g$ and $\g^*$ with $\g$. The Hamiltonian action of $G=\{e\}\times G\subset G\times G$ admits
		$$\nu_2:T^*G\longrightarrow\g,\quad (g,x)\mapsto -x$$ as a moment map; see Subsection \ref{Subsection: Some identities}. Noting that $\mathcal{S}\subset\g$ is a Poisson transversal, a straightforward exercise reveals that $-\mathcal{S}\subset\g$ is also a Poisson transversal. It follows that $\nu_2^{-1}(-\mathcal{S})=G\times\mathcal{S}\subset T^*G$ is a symplectic subvariety. We also observe that Hamiltonian action of $G=G\times\{e\}\subset G\times G$ on $T^*G$ restricts to a Hamiltonian $G$-action on $G\times\mathcal{S}$. The Hamiltonian $G$-variety $G\times\mathcal{S}$ features in several recent works \cite{cro-roe:22,cro-ray:19,bie:97,bie:17,cro-van:21}, and is central to the Moore--Tachikawa conjecture \cite{moo-tac:11}. 
	\end{example}
	
	\begin{example}[Reduction by certain abelian group schemes]\label{Example: Reduction by certain abelian group schemes}
		Recall the notation and conventions from Example \ref{Example: Universal centralizer}. This example explains that the universal centralizer $\mathcal{J}\longrightarrow\mathcal{S}$ is an abelian group scheme. Given $n\in\mathbb{Z}_{\geq 1}$, consider the kernel 
		$$\mathcal{J}_n\coloneqq\mathrm{ker}(\underbrace{\mathcal{J}\times_{\mathcal{S}}\cdots\times_{\mathcal{S}}\mathcal{J}}_{n\text{ times}}\longrightarrow\mathcal{J})$$ of the multiplication map from $n$ copies of $\mathcal{J}$; it is a subgroupoid of $T^*G^n=(G\times\g)^n\tto\g^n$ lying over the diagonally embedded copy $\Delta_n\mathcal{S}\subset\mathfrak{g}^n$ of $\mathcal{S}$. It turns out that $\Delta_n\mathcal{S}$ is pre-Poisson in $\g^n$, and that it admits $\mathcal{J}_n\longrightarrow\Delta_n\mathcal{S}$ is a stabilizer subgroupoid of $T^*G^n$ \cite[Theorem 5.10(i)]{cro-may:22}. The symplectic variety $$\mathcal{Z}_n^{\circ}\coloneqq T^*G^n\sll{\Delta_n\mathcal{S},\mathcal{J}_n}G^n$$ exists as a smooth geometric quotient by $\mathcal{J}_n\longrightarrow\Delta_n\mathcal{S}$, where $G^n$ acts on $T^*G^n$ as the subgroup $G^n=\{e\}\times G^n\subset G^n\times G^n$ \cite[Theorem E(ii)]{cro-may:22}. Furthermore, the Hamiltonian action of $G^n=G^n\times\{e\}\subset G^n\times G^n$ on $T^*G^n$ descends to one on $\mathcal{Z}_n^{\circ}$. The varieties $\{\mathcal{Z}_n^{\circ}\}_{n\in\mathbb{Z}_{\geq 1}}$ are the so-called \textit{open Moore--Tachikawa varieties} appearing in unpublished work of Ginzburg--Kazhdan \cite{gin-kaz:24}. This is part of a broader research program on the \textit{Moore--Tachikawa conjecture} \cite{moo-tac:11}. 
	\end{example}
	
	\begin{example}[Subregular semisimple elements]
		Retain the Lie-theoretic notation and conventions in Example \ref{Example: Reduction by certain abelian group schemes}. An element $x\in\g$ is called \textit{subregular} if $\dim\g_x=\mathrm{rk}\hspace{2pt}\g+2$. Consider the loci $$\g_{\text{subreg}}\coloneqq\{x\in\g:\dim\g_x=\mathrm{rk}\hspace{2pt}\g+2\}\quad\text{and}\quad \g_{\mathrm{ss}}\coloneqq\{x\in\g:x\text{ is semisimple}\}.$$ One finds that $\g_{\text{subreg,ss}}\coloneqq \g_{\text{subreg}}\cap\g_{\text{ss}}$ is a pre-Poisson subvariety of $\g$. As per Example \ref{Example: Universal reduced spaces for Hamiltonian actions}, we may consider the universal reduced space $\mathfrak{M}_{G,\g_{\text{subreg,ss}}}$. It turns out that $$\mathfrak{M}_{G,\g_{\text{subreg,ss}}}=\bigsqcup_{x\in\g_{\text{subreg,ss}}}\left(G/[G_x,G_x]\right).$$It is also known that $[G_x,G_x]\cong\operatorname{SL}_2$ for all $x\in\g_{\text{subreg,ss}}$. Further details are available in \cite[Subsection 4.9]{cro-may:22}.
	\end{example}
	
	\begin{example}[Coulomb branches]
		In \cite{gan-web}, Gannon and Webster confirm that certain symplectic varieties are Coulomb branches. The Hamiltonian reduction that these authors perform in \cite[Theorem 1.1]{gan-web} is an instance of the algebro-geometric version of Theorem \ref{Theorem: Generalized reduction}.   
	\end{example}
	
	\subsection{The Moore--Tachikawa conjecture}\label{Subsection: The Moore--Tachikawa conjecture} We now outline the \textit{Moore--Tachikawa conjecture} \cite{moo-tac:11} referenced in Example \ref{Example: Reduction by certain abelian group schemes}. A first step is to define a symmetric monoidal category $\mathrm{MT}$. The objects of $\mathrm{MT}$ are complex semisimple affine algebraic groups. Suppose that $G$ and $H$ are two such groups. Consider a possibly singular affine Poisson variety $M$ that induces a symplectic structure on an open dense subset of its smooth locus. Assume that $M$ carries an algebraic action of $G\times H$, in which the subgroup $G=G\times\{e\}\subset G\times H$ (resp. $H=\{e\}\times H\subset G\times H$) acts in a Hamiltonian fashion on $M$ (resp. $M^{\text{op}}$). One defines $\mathrm{Hom}_{\mathrm{MT}}(G,H)$ to consist of such varieties $M$, up to $G\times H$-equivariant Poisson isomorphisms that intertwine moment maps.
	
	To define morphism composition in $\mathrm{MT}$, let $G$, $H$, and $K$ be complex semisimple affine algebraic groups. Suppose that $[M]\in\mathrm{Hom}_{\mathrm{MT}}(G,H)$ and $[N]\in\mathrm{Hom}_{\mathrm{MT}}(H,K)$. The diagonal action of $H$ on $M\times N$ is Hamiltonian with respect to $M^{\text{op}}\times N$. Observe that the affine GIT quotient $(M^{\text{op}}\times N)\sll{0}H$ inherits a residual Hamiltonian action of $G\times K$. One defines composition in $\mathrm{MT}$ by \begin{equation}\label{Equation: Composition}[N]\circ [M]\coloneqq [(M^{\text{op}}\times N)\sll{0}H]\in\mathrm{Hom}_{\mathrm{MT}}(G,K).\end{equation}
	Direct products of objects and morphisms constitute the tensor product of a symmetric monoidal structure on $\mathrm{MT}$. 
	
	We now let $\mathrm{COB}_2$ denote the category of $2$-dimensional cobordisms. The objects of $\mathrm{COB}_2$ are compact smooth $1$-manifolds. A morphism from an object $M$ to an object $N$ in this category is a $2$-manifold with boundary $M\sqcup N$, up to appropriate topological equivalence. Morphism composition is defined by concatenation along common boundaries. The symmetric monoidal structure on $\mathrm{MT}$ is given by disjoint unions of objects and morphisms. A \textit{2-dimensional topological quantum field theory (TQFT)} valued in a symmetric monoidal category $\mathcal{C}$ is a symmetric monoidal functor $\eta:\mathrm{COB}_2\longrightarrow\mathcal{C}$. The Moore--Tachikawa conjecture concerns $2$-dimensional TQFTs valued in $\mathrm{MT}$.
	
	\begin{conjecture}[Moore--Tachikawa]\label{Conjecture: Moore--Tachikawa}
		Let $G$ be a connected complex semisimple affine algebraic group with Lie algebra $\g$. Suppose that $\mathcal{S}\coloneqq e+\g_f$ is the Slodowy slice associated to a principal $\mathfrak{sl}_2$-triple $(e,h,f)\in\g^{\times 3}$. There exists a $2$-dimensional TQFT $\eta_G:\mathrm{COB}_2\longrightarrow\mathrm{MT}$ satisfying $\eta_G(S^1)=G$ and $\eta_G(\begin{tikzpicture}[
			baseline=-2.5pt,
			every tqft/.append style={
				transform shape, rotate=90, tqft/circle x radius=4pt,
				tqft/circle y radius= 2pt,
				tqft/boundary separation=0.6cm, 
				tqft/view from=incoming,
			}
			]
			\pic[
			tqft/cup,
			name=d,
			every incoming lower boundary component/.style={draw},
			every outgoing lower boundary component/.style={draw},
			every incoming boundary component/.style={draw},
			every outgoing boundary component/.style={draw},
			cobordism edge/.style={draw},
			cobordism height= 1cm,
			];
		\end{tikzpicture})=[G\times\mathcal{S}]$,
		where $G\times\mathcal{S}$ is equipped with the Hamiltonian $G$-variety structure in Example \ref{Example: Pre-images}. 
	\end{conjecture}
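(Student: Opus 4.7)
The plan is to exploit the classical fact that a $2$-dimensional TQFT valued in a symmetric monoidal category $\mathcal{C}$ is equivalent to a commutative Frobenius object in $\mathcal{C}$. Accordingly, the task reduces to equipping $G \in \mathrm{MT}$ with unit, multiplication, counit, and comultiplication morphisms satisfying the commutative Frobenius axioms, where the conjecture fixes the unit to be $[G \times \mathcal{S}] \in \mathrm{Hom}_{\mathrm{MT}}(\{e\}, G)$ and, by orientation reversal, the counit to be its opposite $[(G \times \mathcal{S})^{\mathrm{op}}] \in \mathrm{Hom}_{\mathrm{MT}}(G, \{e\})$.

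First I would construct, for each $n \geq 1$, an affine Hamiltonian $G^n$-variety $\mathcal{Z}_n$ to attach to the $n$-holed sphere. The key input is the open Moore--Tachikawa variety $\mathcal{Z}_n^{\circ} = T^*G^n \sll{\Delta_n\mathcal{S},\mathcal{J}_n} G^n$ from Example \ref{Example: Reduction by certain abelian group schemes}, and $\mathcal{Z}_n$ should be taken as an affine completion of $\mathcal{Z}_n^{\circ}$. The abelianness of the universal centralizer (Example \ref{Example: Universal centralizer}) is what makes the reduction by $\mathcal{J}_n$ well-defined and compatible with the Hamiltonian action of $G^n = G^n \times\{e\} \subset G^n \times G^n$ that descends to $\mathcal{Z}_n^{\circ}$. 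For $n=1$ the open reduction recovers $G \times \mathcal{S}$ as in Example \ref{Example: Pre-images}, matching the prescribed cap; for $n=2$ one expects the cylinder $T^*G$ (using Proposition \ref{Proposition: Cotangent bundle identity} together with the shifting trick \eqref{Equation: Shifting trick}); and for $n=3$ one obtains the pair-of-pants morphism in $\mathrm{Hom}_{\mathrm{MT}}(G \times G, G)$.

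Next I would verify the Frobenius axioms by a ``reduction in stages'' argument. The fundamental identity is that composition in $\mathrm{MT}$ of the morphisms associated to $\Sigma_{0,n}$ and $\Sigma_{0,m}$, which by \eqref{Equation: Composition} is a Hamiltonian reduction at level $0$ by a single copy of $G$, should coincide with $\mathcal{Z}_{n+m-2}$, matching the topological gluing of the two spheres along a single boundary circle. This reduces, via Theorem \ref{Theorem: Generalized reduction} and the fact that the fibers of $\mathcal{J}\to\mathcal{S}$ commute, to a statement about iterated Hamiltonian reduction along nested pre-Poisson subvarieties of $\g^{n+m}$ being independent of the order in which the reductions are performed. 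Associativity, the Frobenius relation, and the unit and counit axioms would then all follow from this single gluing identity, while commutativity is inherited from the manifest $\mathfrak{S}_n$-symmetry of $\mathcal{Z}_n^{\circ}$ in its $G^n$-factors.

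The hard part will be the passage from smooth generalized Hamiltonian reduction to the affine Poisson world of $\mathrm{MT}$. The variety $\mathcal{Z}_n^{\circ}$ is smooth symplectic but typically not affine, whereas morphisms in $\mathrm{MT}$ are required to be affine Poisson $G^n$-varieties and composition there proceeds via an affine GIT quotient. One must prove that $\mathbb{C}[\mathcal{Z}_n^{\circ}]$ is finitely generated, that $\mathcal{Z}_n \coloneqq \Spec\mathbb{C}[\mathcal{Z}_n^{\circ}]$ inherits the full Hamiltonian $G^n$-structure, and that the gluing identity above holds not merely on a dense open subset but as an isomorphism of affine Poisson $G^n$-varieties. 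This appears to demand replacing the smooth reduction of Theorem \ref{Theorem: Generalized reduction} with the scheme-theoretic reduction promised in Subsection \ref{Subsection: A scheme-theoretic version}, carrying out the whole TQFT construction internally in a category of affine Hamiltonian schemes, and only comparing back to $\mathrm{MT}$ once all Frobenius identities hold on the nose.
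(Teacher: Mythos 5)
The statement you are asked to prove is a \emph{conjecture}, and the paper offers no proof of it: it states explicitly that the conjecture is known only in Lie type $A$ and that, to the authors' knowledge, it remains open otherwise. Your proposal is essentially an outline of the Ginzburg--Kazhdan strategy that the paper itself describes in Subsection \ref{Subsection: The Moore--Tachikawa conjecture}: attach the affinization $\mathcal{Z}_n$ of the open Moore--Tachikawa variety $\mathcal{Z}_n^{\circ}$ to the $n$-holed sphere, and verify the commutative Frobenius axioms by reduction in stages. The paper records that \emph{if} the schemes $\mathcal{Z}_n$ are of finite type, then the Ginzburg--Kazhdan arguments show they satisfy the necessary conditions to yield the desired TQFT. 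So your outline is not wrong as a strategy, but it is not a proof.

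The genuine gap is precisely the step you defer to ``the hard part'': showing that $\mathbb{C}[\mathcal{Z}_n^{\circ}]$ is finitely generated, so that $\mathcal{Z}_n=\operatorname{Spec}\mathbb{C}[\mathcal{Z}_n^{\circ}]$ is an affine variety and hence an admissible morphism in $\mathrm{MT}$. You name this requirement but give no argument for it, and it is exactly the obstruction that keeps the conjecture open outside type $A$ --- the paper states that $\mathcal{Z}_n$ is a morphism in $\mathrm{MT}$ if and only if it is of finite type. Your closing suggestion, to work internally in a category of affine Hamiltonian schemes and ``compare back to $\mathrm{MT}$'' at the end, is in fact what the paper does: it replaces $\mathrm{MT}$ by the category $\mathrm{AMT}$ of affine symplectic groupoids and affine Hamiltonian schemes, where no finite-type hypothesis is imposed, and proves via Hartogs abelianizability of $T^*G\tto\g$ (Theorem \ref{Theorem: Nice} and Example \ref{Example: Adjoint}) a Moore--Tachikawa conjecture \emph{in $\mathrm{AMT}$}. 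But the comparison back to $\mathrm{MT}$ is not a formality --- it requires the very finite-generation statement you have not supplied --- so this move proves a weaker theorem, not Conjecture \ref{Conjecture: Moore--Tachikawa}.
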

	
	This conjecture has received considerable attention in the last decade. In particular, it is known to hold if $G$ is simple of Lie type $A$ \cite{gin-kaz:24,bra-fin-nak:19}. Ginzburg and Kazhdan attempt a Lie type-independent approach in unpublished work \cite{gin-kaz:24}. To this end, recall the Hamiltonian $G^n$-varieties $\mathcal{Z}_n^{\circ}$ in Subsection \ref{Example: Reduction by certain abelian group schemes}. The affinization $\mathcal{Z}_{n}$ of $\mathcal{Z}_{n}^{\circ}$ is an affine Poisson scheme carrying a Hamiltonian $G^n$-action. It turns out that $\mathcal{Z}_n$ is a morphism in $\mathrm{MT}$ if and only if $\mathcal{Z}_n$ is of finite type. If the schemes $\{\mathcal{Z}_n\}_{n\in\mathbb{Z}_{\geq 1}}$ are of finite type, then arguments of Ginzburg--Kazhdan imply that these schemes satisfy necessary conditions to yield a TQFT $\eta_G:\mathrm{COB}_2\longrightarrow\mathrm{MT}$ with $\eta_G(S^1)=G$ and $\eta_G(\begin{tikzpicture}[
		baseline=-2.5pt,
		every tqft/.append style={
			transform shape, rotate=90, tqft/circle x radius=4pt,
			tqft/circle y radius= 2pt,
			tqft/boundary separation=0.6cm, 
			tqft/view from=incoming,
		}
		]
		\pic[
		tqft/cup,
		name=d,
		every incoming lower boundary component/.style={draw},
		every outgoing lower boundary component/.style={draw},
		every incoming boundary component/.style={draw},
		every outgoing boundary component/.style={draw},
		cobordism edge/.style={draw},
		cobordism height= 1cm,
		];
	\end{tikzpicture})=[G\times\mathcal{S}]$.
	
	\subsection{A scheme-theoretic version}\label{Subsection: A scheme-theoretic version}
	We now outline a scheme-theoretic version of Theorem \ref{Theorem: Generalized reduction}, as developed in \cite{cro-may:241}. While our primary motivation is to further investigate Conjecture \ref{Conjecture: Moore--Tachikawa}, we witness applications to results of \'{S}niatycki--Weinstein \cite{sni-wei:83}.
	
	Suppose that $\G\tto X$ is a groupoid object in the category of affine varieties over $\Bbbk=\mathbb{R}$ or $\Bbbk=\mathbb{C}$. We call $\G$ an \textit{affine algebraic groupoid} if $\G$ and $X$ are smooth as varieties, and the source and target maps are smooth as morphisms of varieties. An affine scheme $\mu:M\longrightarrow X$ over $X$ is called an \textit{affine $\G$-scheme} if it carries an algebraic action $\G\fp{\sss}{\mu}M\longrightarrow M$ of $\G$. We write $\Bbbk[M]$ for the algebra of global sections of $M$, and $\Bbbk[M]^{\G}\subset\Bbbk[M]$ for the subalgebra of $\G$-invariant elements.
	
	Let $\G\tto X$ be an affine algebraic groupoid. We call $\G$ an \textit{affine symplectic groupoid} if $\G$ carries an algebraic symplectic form for which the graph of groupoid multiplication is coisotropic in $\G\times\G\times\G^{\text{op}}$. The notions of \textit{pre-Poisson subvariety} of $X$ and \textit{stabilizer subgroupoid} of $\G$ are analogous to those discussed in Subsection \ref{Subsection: Pre-Poisson reduction}. At the same time, consider an affine $\G$-scheme $\mu:M\longrightarrow X$ over $X$. We call $M$ an \textit{affine Hamiltonian $\G$-scheme} if $M$ is Poisson and the graph of the $\G$-action is coisotropic in $\G\times M\times M^{\text{op}}$. If $\H\tto Y$ is a stabilizer subgroupoid of $\G$ over a closed, pre-Poisson subvariety $Y\subset X$, then the closed subscheme scheme $\mu^{-1}(Y)\subset X$ is an affine Hamiltonian $\H$-scheme. The following is proved in \cite{cro-may:241}.
	
	\begin{theorem}[C.--Mayrand]\label{Theorem: Pure algebra}
		Consider an affine symplectic groupoid $\G\tto X$ and affine Hamiltonian $\G$-scheme $\mu:M\longrightarrow X$. Let $\H\tto Y$ be a stabilizer subgroupoid of $\G$ over a closed, pre-Poisson subvariety $Y\subset X$. Write $j:\mu^{-1}(Y)\longrightarrow M$ for the inclusion morphism. There exists a unique Poisson bracket on $\Bbbk[\mu^{-1}(S)]^{\mathcal{H}}$ satisfying $\{j^*f_1,j^*f_2\}=j^*\{f_1,f_2\}$ for all $f_1,f_2\in\Bbbk[M]$ with $j^*f_1,j^*f_2\in\Bbbk[\mu^{-1}(S)]^{\mathcal{H}}$.
	\end{theorem}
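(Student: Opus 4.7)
The plan is to realize $\Bbbk[\mu^{-1}(Y)]^{\H}$ as the quotient of a suitable subalgebra of $\Bbbk[M]$ by an ideal, and to descend the ambient Poisson bracket through this quotient. Since $Y\subset X$ is closed and $\mu$ is a morphism of affine schemes, $j\colon\mu^{-1}(Y)\hookrightarrow M$ is a closed immersion, so $j^{*}\colon\Bbbk[M]\to\Bbbk[\mu^{-1}(Y)]$ is surjective with kernel $I=\mu^{*}(I_{Y})\cdot\Bbbk[M]$, where $I_{Y}\subset\Bbbk[X]$ is the ideal of $Y$. Setting $A:=(j^{*})^{-1}\bigl(\Bbbk[\mu^{-1}(Y)]^{\H}\bigr)$, one has $A\supset I$ and $j^{*}|_{A}$ induces an isomorphism $A/I\cong\Bbbk[\mu^{-1}(Y)]^{\H}$. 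Uniqueness of the bracket is then immediate from the surjectivity of $j^{*}|_{A}$, since the formula $\{j^{*}f_{1},j^{*}f_{2}\}_{\mathrm{red}}=j^{*}\{f_{1},f_{2}\}_{M}$ determines the bracket on all of $\Bbbk[\mu^{-1}(Y)]^{\H}$.

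Existence reduces to establishing the two inclusions $\{I,A\}_{M}\subset I$ and $\{A,A\}_{M}\subset A$. The first guarantees independence of the chosen lifts; the second guarantees the output lands in $\Bbbk[\mu^{-1}(Y)]^{\H}$. Granted both, bilinearity and skew-symmetry are automatic, the Leibniz rule passes through the quotient because $j^{*}|_{A}$ is a ring homomorphism, and the Jacobi identity descends from $\Bbbk[M]$ modulo $I$. The substance of the proof is therefore the verification of these two inclusions.

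I would translate the hypotheses into infinitesimal data. The Hamiltonian $\G$-scheme condition, via coisotropy of the action graph in $\G\times M\times M^{\mathrm{op}}$, implies both that $\mu$ is a Poisson morphism and that each local section $\alpha$ of the Lie algebroid $T^{*}X$ of $\G$ induces a derivation $V_{\alpha}$ on $\Bbbk[M]$ with $V_{\mathrm{d}\phi}=\{\mu^{*}\phi,-\}_{M}$ for $\phi\in\Bbbk[X]$. The pre-Poisson condition on $Y$ supplies a Lie subalgebroid $L_{Y}=(\sigma^{\vee})^{-1}(TY)\cap\mathrm{ann}(TY)\subset T^{*}X|_{Y}$, and the stabilizer subgroupoid hypothesis says $\H$ integrates $L_{Y}$. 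Consequently, the $\H$-invariance of $j^{*}f$ is equivalent to $V_{\alpha}(f)|_{\mu^{-1}(Y)}=0$ for all local sections $\alpha$ of $L_{Y}$.

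To conclude $\{I,A\}_{M}\subset I$, it suffices to show $V_{\mathrm{d}\phi}(f)\in I$ for all $\phi\in I_{Y}$ and $f\in A$, since such $\mu^{*}\phi$ generate $I$. The delicate point, and the main obstacle, is that $\mathrm{d}\phi|_{Y}$ a priori lies only in $\mathrm{ann}(TY)$, not in $L_{Y}$. My plan is to argue locally, using the pre-Poisson condition to adapt generators of $I_{Y}$ to a splitting of $\mathrm{ann}(TY)$ refining $L_{Y}\subset\mathrm{ann}(TY)$; the $L_{Y}$-summand is handled directly by the infinitesimal invariance, while the transverse summand is controlled by combining the Poisson-morphism property of $\mu$ with coisotropy of the action graph, which together constrain $V_{\alpha}(f)$ at points of $\mu^{-1}(Y)$ even for $\alpha\notin\Gamma(L_{Y})$. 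The inclusion $\{A,A\}_{M}\subset A$ follows from a parallel computation, verifying that the reduced bracket is annihilated by the infinitesimal $L_{Y}$-action. This interplay between the Lie algebroid structure and the moment-map data is where the principal technical difficulty resides.
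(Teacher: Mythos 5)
The paper itself gives no proof of this theorem; it is quoted from \cite{cro-may:241}, so there is no internal argument to compare yours against. That said, the skeleton you set up is the right (indeed, essentially forced) one: uniqueness is immediate from surjectivity of $j^*$ from $A\coloneqq(j^*)^{-1}\bigl(\Bbbk[\mu^{-1}(Y)]^{\H}\bigr)$ onto $\Bbbk[\mu^{-1}(Y)]^{\H}$, and existence is exactly equivalent to the two inclusions $\{I,A\}\subset I$ and $\{A,A\}\subset A$ with $I=\ker j^*$. Your translation of the hypotheses into infinitesimal data (the algebroid action $\alpha\mapsto V_\alpha$, the subalgebroid $L_Y$, and the equivalence of $\H$-invariance with $V_\alpha(f)\vert_{\mu^{-1}(Y)}=0$ for $\alpha$ a section of $L_Y$ --- valid for source-connected $\H$) is also correct.

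The gap lies in the step you yourself flag as the principal difficulty, and it is not merely unfinished: the inclusion $\{I,A\}\subset I$ is \emph{false} for a general pre-Poisson $Y$, so no amount of ``controlling the transverse summand'' will produce it. Take $\G=X\times X^{\mathrm{op}}\tto X$ to be the pair groupoid of the symplectic affine space $X=\mathbb{C}^4$ with Darboux coordinates $q_1,p_1,q_2,p_2$, let $M=X$ with $\mu=\mathrm{id}$, and let $Y=\{q_2=p_2=0\}$. Then $Y$ is a Poisson transversal, hence pre-Poisson with $L_Y=0$, and the trivial groupoid over $Y$ is a stabilizer subgroupoid (cf. Example \ref{Example: Pre-images}); so $A=\Bbbk[M]$ and $I=(q_2,p_2)$, yet $\{q_2,p_2\}=1\notin I$. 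Equivalently, $f_1=q_2$ and $f_2=p_2$ have vanishing (hence $\H$-invariant) restrictions while $j^*\{f_1,f_2\}=1\neq 0$, so the characterizing identity cannot hold for \emph{all} lifts; the correct bracket here is the one induced by $j^*\omega$ on the symplectic subvariety $\mu^{-1}(Y)$, which is not the naive restriction of the ambient bracket. Your two inclusions do hold --- and your argument closes up essentially as written --- precisely when $Y$ is coisotropic, since then $L_Y=\mathrm{ann}_{T^*X}(TY)$ and the troublesome conormal directions are absent. For genuinely pre-Poisson $Y$ one must either (a) restrict the characterizing identity to lifts $f$ whose Hamiltonian vector fields are tangent to $\mu^{-1}(Y)$ along $\mu^{-1}(Y)$ and prove that every element of $\Bbbk[\mu^{-1}(Y)]^{\H}$ admits such a lift, or (b) follow Cattaneo--Zambon and factor the reduction through a cosymplectic subvariety $W\supset Y$ in which $Y$ is coisotropic, performing a transversal restriction first and a coisotropic reduction second. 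Either route modifies the well-definedness statement itself, not just its proof, so the theorem as transcribed in this survey should be read with one of these corrections in mind, and your proposed route cannot succeed without such a restructuring.
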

	
	If the hypotheses of Theorem \ref{Theorem: Pure algebra} are satisfied, then $$M\sll{Y,\H}\G\coloneqq\mathrm{Spec}(\Bbbk[\mu^{-1}(S)]^{\mathcal{H}})$$ is an affine Poisson scheme.
	
	\begin{example}[Scheme-theoretic \'{S}niatycki--Weinstein reduction]
		Let $G$ be a connected real Lie group with Lie algebra $\g$. Consider a Hamiltonian $G$-space $M$ with moment map $\mu:M\longrightarrow\g^*$. Given $\xi\in\g$, write $\mu^{\xi}:M\longrightarrow\mathbb{R}$ for the smooth map
		$$\mu^{\xi}:M\longrightarrow\mathbb{R},\quad \mu^{\xi}(p)\coloneqq \mu(p)(\xi),\quad p\in M.$$
		Denote by $I_{\mu}\subset C^{\infty}(M)$ the ideal generated by $\mu^{\xi}$ for all $\xi\in\g$. Note that $I_{\mu}$ is invariant under the action of $G$ on $C^{\infty}(M)$. It follows that $G$ acts on $C^{\infty}(M)/I_{\mu}$. At the same time, consider the quotient map $\pi:C^{\infty}(M)\longrightarrow C^{\infty}(M)/I_{\mu}$. \'{S}niatycki and Weinstein show that there exists a unique Poisson bracket on $(C^{\infty}(M)/I_{\mu})^G$ such that $\{\pi(f_1),\pi(f_2)\}=\pi(\{f_1,f_2\})$ for all $f_1,f_2\in C^{\infty}(M)$ with $\pi(f_1),\pi(f_2)\in (C^{\infty}(M)/I_{\mu})^G$ \cite{sni-wei:83}. In the case of a free and proper $G$-action on $\mu^{-1}(0)$, the Marsden--Weinstein symplectic structure on $\mu^{-1}(0)/G$ is encoded in a Poisson algebra structure on $C^{\infty}(\mu^{-1}(0)/G)$. The \'{S}niatycki--Weinstein Poisson algebra $(C^{\infty}(M)/I_{0})^G$ is canonically isomorphic to $C^{\infty}(\mu^{-1}(0)/G)$ in this case.
		
		Let $G$ be an affine algebraic group over $\mathbb{R}$ or $\mathbb{C}$. Consider an affine Poisson scheme with a Hamiltonian action of $G$ and moment map $\mu:M\longrightarrow\g^*$. As one might imagine, $M$ is an affine Hamiltonian $T^*G$-scheme. We may therefore apply Theorem \ref{Theorem: Pure algebra} to $M$ with $\G=T^*G$, $S=\{0\}\subset\g^*$, and $\H$ the isotropy group of $0$ in $T^*G$. This yields a scheme-theoretic version of the previous paragraph. 
	\end{example}
	
	\subsection{The Moore--Tachikawa conjecture via symplectic groupoids}\label{Subsection: MT via SG}
	It remains to explain the importance of Subsection \ref{Subsection: A scheme-theoretic version} to the Moore--Tachikawa conjecture. We set $\Bbbk=\mathbb{C}$, and define a symmetric monoidal category $\mathrm{AMT}$ as follows. Objects of $\mathrm{AMT}$ are affine symplectic groupoids. If $\G$ and $\H$ are affine symplectic groupoids, then $\mathrm{Hom}_{\mathrm{AMT}}(\G,\H)$ consists of affine Hamiltonian Poisson $\G\times\H^{\text{op}}$-schemes, up to equivariant, moment map-preserving Poisson isomorphisms. Morphism composition in $\mathrm{AMT}$ is defined via Theorem \ref{Theorem: Pure algebra}; see \cite[Subsection 4.3]{cro-may:241} for more details. By defining the tensor product to be the direct product of objects and morphisms, one renders $\mathrm{AMT}$ a symmetric monoidal category.
	
	Consider the Hamiltonian $G$-variety $G\times\mathcal{S}$ in Example \ref{Example: Pre-images}. Note that
	$G\times\mathcal{S}=\ttt^{-1}(\mathcal{S})$ for the target $\ttt$ of the cotangent groupoid $T^*G\tto\g$. Let us now consider an arbitrary affine symplectic groupoid $\G\tto X$. In the spirit of Conjecture \ref{Conjecture: Moore--Tachikawa}, one might seek a TQFT $\eta_{\G}:\mathrm{COB}_2\longrightarrow\mathrm{AMT}$ satisfying $\eta_{\G}(S^1)=\G$ and $\eta_{\G}(\begin{tikzpicture}[
		baseline=-2.5pt,
		every tqft/.append style={
			transform shape, rotate=90, tqft/circle x radius=4pt,
			tqft/circle y radius= 2pt,
			tqft/boundary separation=0.6cm, 
			tqft/view from=incoming,
		}
		]
		\pic[
		tqft/cup,
		name=d,
		every incoming lower boundary component/.style={draw},
		every outgoing lower boundary component/.style={draw},
		every incoming boundary component/.style={draw},
		every outgoing boundary component/.style={draw},
		cobordism edge/.style={draw},
		cobordism height= 1cm,
		];
	\end{tikzpicture})=[\ttt^{-1}(\mathcal{S})]$ for a ``suitable" subvariety $\mathcal{S}\subset X$. One might call this the \textit{Moore--Tachikawa conjecture in $\mathrm{AMT}$}. In the interest of being more precise, we make the following definitions. Let $\G\big\vert_{U}\tto U$ denote the restriction $\sss^{-1}(U)\cap\ttt^{-1}(U)\tto U$ of $\G\tto X$ to $U\subset X$, and note that $\G_{x}\coloneqq\G_{\{x\}}\tto\{x\}$ is the \textit{isotropy group of $x$} for all $x\in X$.
	
	\begin{definition}\label{Definition: Groupoid definition}
		Let $\G\tto X$ be an algebraic symplectic groupoid.
		\begin{itemize}
			\item[\textup{(i)}] We call $\G$ \textit{abelian} if $\sss=\ttt$ and $\G_x$ is abelian for all $x\in X$.
			\item[\textup{(ii)}] We call $\G$ \textit{abelianizable} if it is Morita equivalent to an abelian affine symplectic groupoid.
			\item[\textup{(iii)}] We call $\G$ \textit{Hartogs abelianizable} there exists an open subset $U\subset X$ such that $\G\big\vert_{U}\tto U$ is abelianizable and $X\setminus U$ has codimension at least two in $X$.
			\item[\textup{(iv)}] A smooth closed subvariety $\mathcal{S}\subset X$ is called a \textit{global slice} if it intersects each $\G$-orbit in $X$ transversely in a singleton, and $\G_x$ is abelian for all $x\in\mathcal{S}$. 
			\item[\textup{(v)}] A smooth closed subvariety $\mathcal{S}\subset X$ is called a \textit{Hartogs slice} if there exists an open subset $U\subset X$ such that $\mathcal{S}$ is a global slice to $\G\big\vert_U\tto U$ and $X\setminus U$ has codimension at least two in $X$.
		\end{itemize}
	\end{definition}
	
	\begin{example}\label{Example: Adjoint}
		Suppose that $\g$ is a finite-dimensional complex semisimple Lie algebra with adjoint group $G$. Let $\G\tto X$ be the cotangent groupoid $T^*G\tto\g^*$. Use the Killing form and left trivialization to identify $T^*G$ and $\g^*$ with $G\times\g$ and $\g$, respectively. Consider the open, dense, $G$-invariant subset
		$$\greg\coloneqq\{x\in\g:\dim\g_x=\mathrm{rk}\hspace{2pt}\g\}\subset\g.$$ Write $(T^*G)_{\text{reg}}\tto\greg$ for the pullback of $T^*G\tto\g$ along the inclusion $\greg\subset\g$. On the other hand, let $\mathcal{S}=e+\g_f$ be the Slodowy slice associated to a principal $\mathfrak{sl}_2$-triple $e+\g_f\subset\g$. Results of Kostant imply that $\mathcal{S}$ is a global slice to $(T^*G)_{\text{reg}}\tto\greg$ \cite{kos:59,kos:63}. It is also known that $\g\setminus\greg$ has codimension $3$ in $\g$ \cite[Lemma 3.6]{pop:08}. This implies that $\mathcal{S}$ is a Hartogs slice to $T^*G\tto\g$.
		
		Recall the universal centralizer $\mathcal{J}\longrightarrow\mathcal{S}$ discussed in Example \ref{Example: Reduction by certain abelian group schemes}. In this example, we explain that $\mathcal{J}\longrightarrow\mathcal{S}$ is an abelian affine symplectic groupoid. It turns out that $(T^*G)_{\text{reg}}\tto\greg$ is Morita equivalent to $\mathcal{J}\longrightarrow\mathcal{S}$. In particular, $(T^*G)_{\text{reg}}\tto\greg$ is abelianizable. The fact that $\g\setminus\greg$ has codimension $3$ in $\g$ implies that $T^*G\tto\g$ is Hartogs abelianizable.
	\end{example}
	
	Let $\G\tto X$ be an algebraic symplectic groupoid. If this groupoid admits a global (resp. Hartogs global) slice, then it is abelianizable (resp. Hartogs abelianizable).
	
	\begin{theorem}[C.--Mayrand]\label{Theorem: Nice}
		Let $\G\tto X$ be an affine symplectic groupoid.
		\begin{itemize}
			\item[\textup{(i)}] If $\G\tto X$ is Hartogs abelianizable, then it determines a canonical TQFT $\eta_{\G}:\mathrm{COB}_2\longrightarrow\mathrm{AMT}$ satisfying $\eta_{\G}(S^1)=\G$.
			\item[\textup{(ii)}] If $\mathcal{S}\subset X$ is a Hartogs slice, then $\eta_{\G}(\begin{tikzpicture}[
				baseline=-2.5pt,
				every tqft/.append style={
					transform shape, rotate=90, tqft/circle x radius=4pt,
					tqft/circle y radius= 2pt,
					tqft/boundary separation=0.6cm, 
					tqft/view from=incoming,
				}
				]
				\pic[
				tqft/cup,
				name=d,
				every incoming lower boundary component/.style={draw},
				every outgoing lower boundary component/.style={draw},
				every incoming boundary component/.style={draw},
				every outgoing boundary component/.style={draw},
				cobordism edge/.style={draw},
				cobordism height= 1cm,
				];
			\end{tikzpicture})=[\ttt^{-1}(\mathcal{S})]$.
		\end{itemize}
	\end{theorem}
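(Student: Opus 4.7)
The plan is to recognize that a $2$-dimensional TQFT valued in a symmetric monoidal category $\mathcal{C}$ is equivalent to a commutative Frobenius algebra object in $\mathcal{C}$. Accordingly, to prove (i) one must exhibit on $\G$ a commutative Frobenius algebra structure in $\mathrm{AMT}$: a multiplication $\G \otimes \G \longrightarrow \G$ (pair of pants), a unit $\mathbf{1} \longrightarrow \G$ (cup), dual counit and comultiplication, satisfying associativity, commutativity, unitality, and the Frobenius relation. The functor $\eta_{\G}$ is then reconstructed by decomposing cobordisms into these elementary pieces, and (ii) will reduce to identifying the unit morphism with $[\ttt^{-1}(\mathcal{S})]$.

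I would first establish everything when $\G \tto X$ is itself an abelian affine symplectic groupoid, so $\sss = \ttt$ and $\G \times_X \G \tto X$ is an abelian group scheme over $X$. For any closed subvariety $\mathcal{S} \subset X$ which is a global slice, the pullback $\ttt^{-1}(\mathcal{S})$ inherits from $\ttt$ the structure of an affine Hamiltonian $\G$-scheme. The pair of pants and its iterates are then built along the lines of Example \ref{Example: Reduction by certain abelian group schemes}: form the $n$-fold fiber product $\G \times_X \cdots \times_X \G \tto X$, take the kernel of multiplication as a stabilizer subgroupoid over the diagonal $\Delta_n\mathcal{S}$, and apply Theorem \ref{Theorem: Pure algebra} to realize the result as an affine Hamiltonian $\G^{n}$-scheme. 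Because composition in $\mathrm{AMT}$ is itself defined by the reduction of Theorem \ref{Theorem: Pure algebra}, composition of these basic cobordisms matches concatenation of the corresponding fiber-product constructions, and the Frobenius axioms reduce to tautological identities between iterated fiber products of an abelian group scheme.

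Next I would propagate this Frobenius structure through Morita equivalence. A Morita equivalence between $\G\big\vert_U \tto U$ and an abelian affine symplectic groupoid $\mathcal{A} \tto \mathcal{S}$ induces an equivalence between the symmetric monoidal categories of affine Hamiltonian modules, and it transports the Frobenius algebra object for $\mathcal{A}$ to one for $\G\big\vert_U$ in $\mathrm{AMT}$. To upgrade from $U$ to $X$, I would invoke Hartogs' phenomenon: each elementary morphism is an affine Hamiltonian $\G^m$-scheme whose invariant coordinate ring is defined over $U^m$, and since $X$ is smooth with $X \setminus U$ of codimension at least two, these invariant rings extend uniquely to $X^m$. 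This produces an affine Hamiltonian $\G^{m}$-scheme over $X^m$ for each elementary cobordism, and the Frobenius axioms, verified on the Zariski-dense open set, persist on the extension. Statement (ii) follows because on $U$ the unit is realized as $\ttt^{-1}(\mathcal{S})$ via the global-slice model, and the Hartogs extension of this construction to $X$ is literally $\ttt^{-1}(\mathcal{S})$ whenever $\mathcal{S}$ is a Hartogs slice.

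The principal obstacle I anticipate is the Hartogs extension step: one must ensure that the $\G^m$-invariant rings obtained by reduction on $U^m$ extend to finitely generated rings on $X^m$ carrying a compatible Poisson bracket, so that the extended schemes remain genuine objects of $\mathrm{AMT}$. Smoothness of $X$, the codimension-$\geq 2$ condition, and affineness of $\G$ must conspire to give normality and the needed extension property simultaneously for invariants and for Poisson brackets; once this is secured, canonicity of $\eta_{\G}$ follows from uniqueness of Hartogs extension and from the intrinsic definition of the elementary cobordism pieces as reductions.
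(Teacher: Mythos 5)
This survey states Theorem \ref{Theorem: Nice} without proof, citing \cite{cro-may:24}, so there is no in-paper argument to compare against line by line; what can be compared is the strategy the paper telegraphs, namely that ``abelian symplectic groupoids and Morita equivalences yield topological quantum field theories.'' Your outline matches that strategy in its broad strokes: classify $2$-dimensional TQFTs by commutative Frobenius algebra objects, build the genus-zero pieces for an abelian groupoid out of kernels of iterated multiplication over diagonals exactly as in Example \ref{Example: Reduction by certain abelian group schemes}, compose via Theorem \ref{Theorem: Pure algebra}, transport along the abelianization, and handle the complement of $U$ by a codimension-two extension argument. That is the right skeleton.

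Three points deserve correction or completion. First, your anticipated obstacle about extending to \emph{finitely generated} rings is a red herring: morphisms of $\mathrm{AMT}$ are affine Hamiltonian Poisson \emph{schemes}, not varieties, and the entire reason for replacing $\mathrm{MT}$ by $\mathrm{AMT}$ is to avoid the finite-type question (the paper notes that whether $\mathcal{Z}_n$ is of finite type is precisely what separates the $\mathrm{AMT}$ statement from Conjecture \ref{Conjecture: Moore--Tachikawa}). What the Hartogs step actually requires is that $\G^m$-invariant functions on the reduction over $U^m$ extend uniquely across the codimension-$\geq 2$ complement together with their Poisson brackets; you name this but do not close it, and it is the genuine technical heart of part (i). Second, the Morita step is cleaner than an ``equivalence of module categories'': a Morita equivalence furnishes an invertible morphism in $\mathrm{AMT}$ between $\G\big\vert_U$ and the abelian groupoid, and a commutative Frobenius structure transports along any isomorphism of objects in a symmetric monoidal category; phrased this way, part (ii) is the statement that the Morita bimodule implementing a global slice $\mathcal{S}$ for $\G\big\vert_U$ is $\ttt^{-1}(\mathcal{S})\cap\sss^{-1}(U)$, whose Hartogs extension is $\ttt^{-1}(\mathcal{S})$. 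Third, your discussion of slices in the abelian case is slightly off: for an abelian groupoid $\sss=\ttt$, so every orbit is a point and the only global slice is $X$ itself; the slice plays no role in the abelian model and enters only through the Morita bimodule relating $\G\big\vert_U$ to its abelianization.
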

	
	\begin{example}[The Moore--Tachikawa conjecture in $\mathrm{AMT}$]
		Retain the objects and conventions in Example \ref{Example: Adjoint}. The TQFT $\eta_{T^*G}:\mathrm{COB}_2\longrightarrow\mathrm{AMT}$ satisfies $\eta_{T^*G}(S^1)=T^*G$ and $\eta_{T^*G}(\begin{tikzpicture}[
			baseline=-2.5pt,
			every tqft/.append style={
				transform shape, rotate=90, tqft/circle x radius=4pt,
				tqft/circle y radius= 2pt,
				tqft/boundary separation=0.6cm, 
				tqft/view from=incoming,
			}
			]
			\pic[
			tqft/cup,
			name=d,
			every incoming lower boundary component/.style={draw},
			every outgoing lower boundary component/.style={draw},
			every incoming boundary component/.style={draw},
			every outgoing boundary component/.style={draw},
			cobordism edge/.style={draw},
			cobordism height= 1cm,
			];
		\end{tikzpicture})=[G\times\mathcal{S}]$. This proves the Moore--Tachikawa conjecture in $\AMT$. To the authors' knowledge, the original conjecture remains open outside of Lie type $A$. 
	\end{example}
	
	\begin{example}[More general Slodowy slices]
		Retain the objects and conventions in Example \ref{Example: Pre-images}. Let $(e,h,f)\in\g^{\times 3}$ be an arbitrary $\mathfrak{sl}_2$-triple. Example \ref{Example: Pre-images} explains that the Slodowy slice $\mathcal{S}\coloneqq e+\g_f$ is a Poisson transversal in $\g$. As such, $\mathcal{S}$ is a Poisson variety. The restriction $(T^*G)\big\vert_{\mathcal{S}}\tto\mathcal{S}$ is a symplectic groupoid integrating this Poisson variety.
		
		Let us assume that $\g=\mathfrak{sl}_n$. Suppose that $e,f\in\mathfrak{sl}_n$ belong to the minimal nilpotent orbit in $\mathfrak{sl}_n$. The symplectic groupoid $(T^*G)\big\vert_{\mathcal{S}}\tto\mathcal{S}$ turns out to admit a Hartogs slice; see \cite[Subsection 7.3]{cro-may:24}.
	\end{example}
	
	\begin{example}[Moore--Tachikawa groups]
		Let $G$ be an affine algebraic group with Lie algebra $\g$. Consider the open subset
		$$\g^*_{\text{reg}}\coloneqq\{\xi\in\g^*:\dim\g_{\xi}\leq\dim\g_{\eta}\text{ for all }\eta\in\g^*\}.$$
		We call $G$ a \textit{Moore--Tachikawa group} if it satisfies the following conditions:
		\begin{itemize}
			\item[\textup{(i)}] $\g^*\setminus\g^*_{\text{reg}}$ has codimension at least $2$ in $\g^*$; 
			\item[\textup{(ii)}] $(T^*G)\big\vert_{\g^*_{\text{reg}}}\tto\g^*_{\text{reg}}$ is abelianizable.
		\end{itemize}
		Theorem \ref{Theorem: Nice} tells us that a Moore--Tachikawa group $G$ induces a TQFT $\eta_{T^*G}:\mathrm{COB}_2\longrightarrow\mathrm{AMT}$. While Kostant's results imply that reductive groups are Moore--Tachikawa, there are non-reductive example of Moore--Tachikawa groups; see Subsections 8.1 and 8.2 of \cite{cro-may:24} for more details.  
	\end{example}
	
	\bibliographystyle{acm}
	\bibliography{Hamiltonian}

\begin{thebibliography}{10}

\bibitem{alb:89}
{\sc Albert, C.}
\newblock Le th\'{e}or\`eme de r\'{e}duction de {M}arsden-{W}einstein en
  g\'{e}om\'{e}trie cosymplectique et de contact.
\newblock {\em J. Geom. Phys. 6}, 4 (1989), 627--649.

\bibitem{ale-kos-mei}
{\sc Alekseev, A., Kosmann-Schwarzbach, Y., and Meinrenken, E.}
\newblock Quasi-{P}oisson manifolds.
\newblock {\em Canad. J. Math. 54}, 1 (2002), 3--29.

\bibitem{ale-mal-mei}
{\sc Alekseev, A., Malkin, A., and Meinrenken, E.}
\newblock Lie group valued moment maps.
\newblock {\em J. Differential Geom. 48}, 3 (1998), 445--495.

\bibitem{ati-bot:84}
{\sc Atiyah, M.~F., and Bott, R.}
\newblock The {Y}ang-{M}ills equations over {R}iemann surfaces.
\newblock {\em Philos. Trans. Roy. Soc. London Ser. A 308}, 1505 (1983),
  523--615.

\bibitem{bie:97}
{\sc Bielawski, R.}
\newblock Hyperk\"{a}hler structures and group actions.
\newblock {\em J. London Math. Soc. (2) 55}, 2 (1997), 400--414.

\bibitem{bie:17}
{\sc Bielawski, R.}
\newblock Slices to sums of adjoint orbits, the {A}tiyah-{H}itchin manifold,
  and {H}ilbert schemes of points.
\newblock {\em Complex Manifolds 4}, 1 (2017), 16--36.

\bibitem{bra-fin-nak:19}
{\sc Braverman, A., Finkelberg, M., and Nakajima, H.}
\newblock Ring objects in the equivariant derived {S}atake category arising
  from {C}oulomb branches.
\newblock {\em Adv. Theor. Math. Phys. 23}, 2 (2019), 253--344.
\newblock Appendix by Gus Lonergan.

\bibitem{bal:23}
{\sc B\u{a}libanu, A.}
\newblock The partial compactification of the universal centralizer.
\newblock {\em Selecta Math. (N.S.) 29}, 5 (2023), Paper No. 85, 36.

\bibitem{bal-may:22}
{\sc B\u{a}libanu, A., and Mayrand, M.}
\newblock Reduction along strong {D}irac maps.
\newblock arXiv:2210.07200.

\bibitem{cat-zam:07}
{\sc Cattaneo, A.~S., and Zambon, M.}
\newblock Pre-{P}oisson submanifolds.
\newblock In {\em Travaux math\'{e}matiques. {V}ol. {XVII}}, vol.~17 of {\em
  Trav. Math.} Fac. Sci. Technol. Commun. Univ. Luxemb., Luxembourg, 2007,
  pp.~61--74.

\bibitem{cat-zam:09}
{\sc Cattaneo, A.~S., and Zambon, M.}
\newblock Coisotropic embeddings in {P}oisson manifolds.
\newblock {\em Trans. Amer. Math. Soc. 361}, 7 (2009), 3721--3746.

\bibitem{cro-may:22}
{\sc Crooks, P., and Mayrand, M.}
\newblock Symplectic reduction along a submanifold.
\newblock {\em Compos. Math. 158}, 9 (2022), 1878--1934.

\bibitem{cro-may:24}
{\sc Crooks, P., and Mayrand, M.}
\newblock The {M}oore--{T}achikawa conjecture via shifted symplectic geometry.
\newblock arXiv:2409.03532.

\bibitem{cro-may:241}
{\sc Crooks, P., and Mayrand, M.}
\newblock Scheme-theoretic coisotropic reduction.
\newblock arXiv:2408.11932.

\bibitem{cro-ray:19}
{\sc Crooks, P., and Rayan, S.}
\newblock Abstract integrable systems on hyperk\"{a}hler manifolds arising from
  {S}lodowy slices.
\newblock {\em Math. Res. Lett. 26}, 1 (2019), 9--33.

\bibitem{cro-roe:22}
{\sc Crooks, P., and R\"{o}ser, M.}
\newblock The log symplectic geometry of {P}oisson slices.
\newblock {\em J. Symplectic Geom. 20}, 1 (2022), 135--189.

\bibitem{cro-van:21}
{\sc Crooks, P., and van Pruijssen, M.}
\newblock An application of spherical geometry to hyperk\"{a}hler slices.
\newblock {\em Canad. J. Math. 73}, 3 (2021), 687--716.

\bibitem{cro-wei:23}
{\sc Crooks, P., and Weitsman, J.}
\newblock Abelianization and the {D}uistermaat-{H}eckman theorem.
\newblock {\em Bull. Lond. Math. Soc. 55}, 6 (2023), 2732--2742.

\bibitem{cro-wei:232}
{\sc Crooks, P., and Weitsman, J.}
\newblock The double {G}elfand-{C}etlin system, invariance of polarization, and
  the {P}eter-{W}eyl theorem.
\newblock {\em J. Geom. Phys. 194\/} (2023), Paper No. 105011, 12.

\bibitem{cro-wei:24}
{\sc Crooks, P., and Weitsman, J.}
\newblock Gelfand-{C}etlin abelianizations of symplectic quotients.
\newblock {\em Pacific J. Math. 333}, 2 (2024), 253--271.

\bibitem{fre-mar:17}
{\sc Frejlich, P., and M\u{a}rcu\c{t}, I.}
\newblock The normal form theorem around {P}oisson transversals.
\newblock {\em Pacific J. Math. 287}, 2 (2017), 371--391.

\bibitem{gan-gin:02}
{\sc Gan, W.~L., and Ginzburg, V.}
\newblock Quantization of {S}lodowy slices.
\newblock {\em Int. Math. Res. Not.}, 5 (2002), 243--255.

\bibitem{gan-gin:04}
{\sc Gan, W.~L., and Ginzburg, V.}
\newblock Hamiltonian reduction and {M}aurer-{C}artan equations.
\newblock {\em Mosc. Math. J. 4}, 3 (2004), 719--727, 784.

\bibitem{gan-web}
{\sc Gannon, T., and Webster, B.}
\newblock Functoriality of {C}oulomb branches.
\newblock arXiv:2501.09962.

\bibitem{gin-kaz:24}
{\sc Ginzburg, V., and Kazhdan, D.}
\newblock Algebraic symplectic manifolds arising in {S}icilian theories.
\newblock {\em Private communication\/}.

\bibitem{gui-jef-sja:02}
{\sc Guillemin, V., Jeffrey, L., and Sjamaar, R.}
\newblock Symplectic implosion.
\newblock {\em Transform. Groups 7}, 2 (2002), 155--184.

\bibitem{gui-ste:83}
{\sc Guillemin, V., and Sternberg, S.}
\newblock The {G}elfand-{C}etlin system and quantization of the complex flag
  manifolds.
\newblock {\em J. Funct. Anal. 52}, 1 (1983), 106--128.

\bibitem{har-kav:15}
{\sc Harada, M., and Kaveh, K.}
\newblock Integrable systems, toric degenerations and {O}kounkov bodies.
\newblock {\em Invent. Math. 202}, 3 (2015), 927--985.

\bibitem{her-sch-sea:20}
{\sc Herbig, H.-C., Schwarz, G.~W., and Seaton, C.}
\newblock Symplectic quotients have symplectic singularities.
\newblock {\em Compos. Math. 156}, 3 (2020), 613--646.

\bibitem{hklr:87}
{\sc Hitchin, N.~J., Karlhede, A., Lindstr\"{o}m, U., and Ro\v{c}ek, M.}
\newblock Hyper-{K}\"{a}hler metrics and supersymmetry.
\newblock {\em Comm. Math. Phys. 108}, 4 (1987), 535--589.

\bibitem{hof-lan:23}
{\sc Hoffman, B., and Lane, J.}
\newblock Stratified gradient {H}amiltonian vector fields and collective
  integrable systems.
\newblock arXiv:2008.13656.

\bibitem{hur-jef-sja}
{\sc Hurtubise, J., Jeffrey, L., and Sjamaar, R.}
\newblock Group-valued implosion and parabolic structures.
\newblock {\em Amer. J. Math. 128}, 1 (2006), 167--214.

\bibitem{kir:84}
{\sc Kirwan, F.~C.}
\newblock {\em Cohomology of quotients in symplectic and algebraic geometry},
  vol.~31 of {\em Mathematical Notes}.
\newblock Princeton University Press, Princeton, NJ, 1984.

\bibitem{kos:59}
{\sc Kostant, B.}
\newblock The principal three-dimensional subgroup and the {B}etti numbers of a
  complex simple {L}ie group.
\newblock {\em Amer. J. Math. 81\/} (1959), 973--1032.

\bibitem{kos:63}
{\sc Kostant, B.}
\newblock Lie group representations on polynomial rings.
\newblock {\em Amer. J. Math. 85\/} (1963), 327--404.

\bibitem{ler:95}
{\sc Lerman, E.}
\newblock Symplectic cuts.
\newblock {\em Math. Res. Lett. 2}, 3 (1995), 247--258.

\bibitem{mar-wei:74}
{\sc Marsden, J., and Weinstein, A.}
\newblock Reduction of symplectic manifolds with symmetry.
\newblock {\em Rep. Mathematical Phys. 5}, 1 (1974), 121--130.

\bibitem{mar-rat:86}
{\sc Marsden, J.~E., and Ratiu, T.}
\newblock Reduction of {P}oisson manifolds.
\newblock {\em Lett. Math. Phys. 11}, 2 (1986), 161--169.

\bibitem{mei:96}
{\sc Meinrenken, E.}
\newblock On {R}iemann-{R}och formulas for multiplicities.
\newblock {\em J. Amer. Math. Soc. 9}, 2 (1996), 373--389.

\bibitem{mei-sja}
{\sc Meinrenken, E., and Sjamaar, R.}
\newblock Singular reduction and quantization.
\newblock {\em Topology 38}, 4 (1999), 699--762.

\bibitem{mey:73}
{\sc Meyer, K.~R.}
\newblock Symmetries and integrals in mechanics.
\newblock In {\em Dynamical systems ({P}roc. {S}ympos., {U}niv. {B}ahia,
  {S}alvador, 1971)\/} (1973), Academic Press, New York-London, pp.~259--272.

\bibitem{mik-wei:88}
{\sc Mikami, K., and Weinstein, A.}
\newblock Moments and reduction for symplectic groupoids.
\newblock {\em Publ. Res. Inst. Math. Sci. 24}, 1 (1988), 121--140.

\bibitem{moo-tac:11}
{\sc Moore, G.~W., and Tachikawa, Y.}
\newblock On 2d {TQFT}s whose values are holomorphic symplectic varieties.
\newblock In {\em String-{M}ath 2011}, vol.~85 of {\em Proc. Sympos. Pure
  Math.} Amer. Math. Soc., Providence, RI, 2012, pp.~191--207.

\bibitem{pan-toe-vaq-vez:13}
{\sc Pantev, T., To\"{e}n, B., Vaqui\'{e}, M., and Vezzosi, G.}
\newblock Shifted symplectic structures.
\newblock {\em Publ. Math. Inst. Hautes \'{E}tudes Sci. 117\/} (2013),
  271--328.

\bibitem{pop:08}
{\sc Popov, V.~L.}
\newblock Irregular and singular loci of commuting varieties.
\newblock {\em Transform. Groups 13}, 3-4 (2008), 819--837.

\bibitem{saf:17}
{\sc Safronov, P.}
\newblock Symplectic implosion and the {G}rothendieck-{S}pringer resolution.
\newblock {\em Transform. Groups 22}, 3 (2017), 767--792.

\bibitem{sja:95}
{\sc Sjamaar, R.}
\newblock Holomorphic slices, symplectic reduction and multiplicities of
  representations.
\newblock {\em Ann. of Math. (2) 141}, 1 (1995), 87--129.

\bibitem{sja-ler:91}
{\sc Sjamaar, R., and Lerman, E.}
\newblock Stratified symplectic spaces and reduction.
\newblock {\em Ann. of Math. (2) 134}, 2 (1991), 375--422.

\bibitem{sni-wei:83}
{\sc \'{S}niatycki, J.~e., and Weinstein, A.}
\newblock Reduction and quantization for singular momentum mappings.
\newblock {\em Lett. Math. Phys. 7}, 2 (1983), 155--161.

\end{thebibliography}
\end{document}